\documentclass[10pt]{article}

\DeclareMathAlphabet{\mathpzc}{OT1}{pzc}{m}{it}

\usepackage{amsmath}
\usepackage{amssymb}
\usepackage{amsfonts}
\usepackage{amsthm}
\usepackage{mathrsfs}
\usepackage{ifsym}
\usepackage{fontenc}
\usepackage{textcomp}
\usepackage{tipa}
\usepackage{enumerate}
\usepackage{mathtools}
\usepackage[pdftex]{color, graphicx}
\numberwithin{equation}{section}

% Set the beginning of a LaTeX document
\begin{document}
 
\title{{\bf On Ramanujan's cubic continued fraction}}       % Enter your title between curly braces
\author{Sushmanth J. Akkarapakam and Patrick Morton}        % Enter your name between curly braces
\date{July 26, 2024}          % Enter your date or \today between curly braces
\maketitle

\begin{abstract}  The periodic points of the algebraic function defined by the equation $g(x,y)=x^3(4y^2+2y+1)-y(y^2-y+1)=0$ are shown to be expressible in terms of values of Ramanujan's cubic continued fraction $c(\tau)$ with arguments in an imaginary quadratic field $K$ in which the prime $3$ splits.  If $w = (a+\sqrt{-d})/2$ lies in an order of conductor $f$ in $K$ and $9 \mid N_{K/\mathbb{Q}}(w)$, then one of these periodic points is $c(w/3)$, which is shown to generate the ring class field of conductor $2f$ over $K$.\footnote{{\it Keywords}: Continued fraction, ring class field, periodic point, algebraic function, $3$-adic field}
\end{abstract}

\section{Introduction}

Ramanujan's cubic continued fraction is defined by
\begin{equation}
c(\tau)=\cfrac{q^{1/3}}{1+\cfrac{q+q^2}{1+\cfrac{q^2+q^4}{1+\cfrac{q^3+q^6}{1+\cdots}}}}.
\label{eqn:1.1}
\end{equation}
This continued fraction $c(\tau)$ can also be written in the form of an infinite product expansion, namely,
\begin{equation}
c(\tau)=q^{1/3}\prod_{n=1}^\infty\frac{(1-q^{6n-1})(1-q^{6n-5})}{(1-q^{6n-3})^2}.
\label{eqn:1.2}
\end{equation}
In \cite{se}, Selberg proved the equality of the right-hand sides of the above two equations in his first paper in $1937$, not knowing that it already appears in Ramanujan's lost notebook, but without proof. \medskip

In \cite{am}, we described the periodic points of an algebraic function related to the Ramanujan-G\"ollnitz-Gordon continued fraction $v(\tau)$. In this paper, we take a similar approach in studying the cubic continued fraction $c(\tau)$ and describe how the periodic points for a certain algebraic function arise as values of this continued fraction. \medskip

We consider negative quadratic discriminants of the form $-d \equiv 1$ (mod $3$) and arguments in the field $K = \mathbb{Q}(\sqrt{-d})$. Let $R_K$ be the ring of integers in this field and let the prime ideal factorization of (3) in $R_K$ be $(3) = \wp_3 \wp_3'$. We denote by $\Omega_f$ the ring class field over $K$ whose conductor is $f$ corresponding to the order $\textsf{R}_{-d}$ of discriminant $-d= d_K f^2$ in $K$ ($d_K$ is the discriminant of $K$). We show in Section 4 that certain values of $c(\tau)$, where  $\tau$ is the basis quotient of an ideal in $\textsf{R}_{-d}$, are periodic points of a fixed algebraic function, independent of $d$, and generate the ring class fields $\Omega_{2f}$ over $K$.\medskip

\noindent {\bf Theorem 1.1.} {\it Let $K=\mathbb{Q}(\sqrt{-d})$ with $-d=d_Kf^2\equiv1\,(\mathrm{mod}\,3)$, where $d_K=\mathrm{disc}(K/\mathbb{Q})$. Moreover, let $w \in \textsf{R}_{-d}$ be the element defined as
\begin{equation*}
w=\begin{dcases}k+\frac{\sqrt{-d}}{2},&2 \mid d\\
\frac{k+\sqrt{-d}}{2},&(2,d)=1,\end{dcases}
\end{equation*}
where $9 \mid N(w)$ and $k \equiv 1 \ (\mathrm{mod} \ 6)$. Then the value $c(w/3)$ generates the ring class field $\Omega_{2f}$ over $\mathbb{Q}$.} \bigskip

This result is a complement to the result of \cite[Thm. 13]{chkp}, according to which the value $c(\tau)$, for $\tau$ a basis quotient of an integral ideal (prime to $6$) in the maximal order $R_K$ of $K = \mathbb{Q}(\sqrt{-d})$, generates the ray class field $\Sigma_6 $ over $K$.  The field $\Sigma_6$ coincides with the ring class field $\Omega_6$ for the fields we are considering. \medskip

Recall that a periodic point $a$ of an algebraic function $y=\mathfrak{f}(z)$, with minimal polynomial $f(z,y)$ over a given field $F(z)$, is a value in the algebraic closure $\overline{F}$ for which there exist $a_1, a_2, \dots, a_{n-1} \in \overline{F}$ for which
$$f(a,a_1) = f(a_1, a_2) = \cdots = f(a_{n-2},a_{n-1}) = f(a_{n-1},a) = 0.$$
See \cite{mn}.  With this definition we prove the following. \bigskip

\noindent {\bf Theorem 1.2.} {\it Let $w \in K$ be as in Theorem 1.1.  The generator $2c(w/3)$ of the field $\Omega_{2f}$ over $\mathbb{Q}$, along with the values
$$\frac{c_1(w/6)}{c(w/3)}, \ \frac{-1}{c_1(w/6)}, \ \ \textrm{with} \ \ c_1(\tau)=c(\tau+3/2),$$
are periodic points of the algebraic function $\mathfrak{f}(z)$ defined by $f(z,\mathfrak{f}(z))=0$, where $f(x,y)=x^3(y^2+y+1)-y(y^2-2y+4)$.} \bigskip

An explicit representation of the multi-valued function $\mathfrak{f}(z)$ can be calculated using Cardan's formulas:
\begin{equation*}
\mathfrak{f}(z) = \frac{1}{3} (z^3 + 8) \left(\frac{z^3-1}{z^3+8}\right)^{1/3} + \frac{1}{3}(z^3 - 1)\left(\frac{z^3-1}{z^3+8}\right)^{-1/3} + \frac{1}{3}(z^3 + 2).
\end{equation*}

\noindent {\bf Theorem 1.3.} {\it The only periodic points of the algebraic function $\mathfrak{f}(z)$ are the fixed points $0,1,-2,\pm\sqrt{-2}$ and the conjugates over $\mathbb{Q}$ of the values listed in Theorem 1.2, as $-d$ varies over all negative quadratic discriminants satisfying $-d \equiv 1$ (mod $3$).  These periodic points are all algebraic integers.} \bigskip

This result is significant because, for one thing, the polynomial $f(x,y)$ can be used to calculate the minimal polynomials of the corresponding periodic points, by means of iterated resultants.  See Sections 4.2 and 5.  This gives an algebraic method for computing the minimal polynomials over $\mathbb{Q}$ of the values $2c(w/3)$.  Also, Theorem 1.3 shows that all the periodic points of $\mathfrak{f}(z)$ in $\overline{\mathbb{Q}}$ arise from Galois theory and the theory of complex multiplication, since the fact that $\eta = 2c(w/3)$ is periodic follows from the relation
\begin{equation}
f(\eta, \eta^{\tau_3}) = 0, \ \ \tau_3 = \left(\frac{\Omega_{2f}/K}{\wp_3}\right), \ \ K = \mathbb{Q}(\sqrt{-d}),
\label{eqn:1.3}
\end{equation}
where $\tau_3 \in \textrm{Gal}(\Omega_{2f}/K)$ is the Frobenius automorphism for the prime divisor $\wp_3 = (3,w)$ in $R_K$.  The proof of this uses the defining property of the Frobenius automorphism and the results of \cite{mc}.  The minimal period of $\eta$ with respect to $\mathfrak{f}(z)$ equals the order in $\textrm{Gal}(\Omega_{2f}/K)$ of the automorphism $\tau_3$.  This follows from the fact that considered $3$-adically, the equation  $f(x,y) = 0$ has, for a given $x \not \equiv 1$ (mod $3$) in the maximal unramified, algebraic extension $\textsf{K}_3$ of the $3$-adic field $\mathbb{Q}_3$, a unique solution in $\textsf{K}_3$:
\begin{align}
\notag y = T(x) &= \frac{1}{3}(x^3 + 2) + \frac{1}{3} (x^3 + 8) \sum_{n=0}^\infty{\binom{1/3}{n} (-1)^n \frac{3^{2n}}{(x^3+8)^n}} \\
 & + \frac{1}{3} (x^3 - 1)\sum_{n=0}^\infty{\binom{-1/3}{n} (-1)^n \frac{3^{2n}}{(x^3+8)^n}}, \ \ x \not \equiv 1 \ (\textrm{mod} \ 3).
 \label{eqn:1.4}
\end{align}
(See Lemma \ref{lem:4} in Section 5.)  With this, equation (\ref{eqn:1.3}) can be expressed in the form
\begin{equation}
\eta^{\tau_3} = T(\eta) \ \ \textrm{in} \ \textsf{K}_3, \ \ \eta = 2c(w/3),
\label{eqn:1.5}
\end{equation}
in terms of the single-valued $3$-adic function $T(x)$. This implies that $\eta= 2c(w/3)$ has period $n = \textrm{ord}(\tau_3)$ with respect to the action of $T(x)$ on $\textsf{K}_3$. \medskip

\renewcommand{\thefootnote}{\ensuremath{\fnsymbol{footnote}}}

In Section 2\footnote{Sections 2-4 and parts of Sections 5 and 6 are taken from the first author's Ph.D. dissertation.  See \cite{akk}.}, we prove modular relations for the continued fraction $c(\tau)$ which are fundamental for the whole paper.  In Section 3, we relate $c(\tau)$ to the functions $\alpha(\tau), \beta(\tau)$ from \cite{ms}, which are closely related to cubic theta functions.  Then, in Section 4, we prove the above theorems.  In Section 5, we prove several properties of the polynomials satisfied by the values $2c(w/3)$ and the iterated resultants which are used to compute them.  We also discuss the $3$-adic function $T(x)$ and prove (\ref{eqn:1.5}).  In Section 6, we compute several examples of the values $c(w/3)$.  For example, when $d = 11$, we find the value
\begin{align*}
2c\left(\frac{13+\sqrt{11}i}{6}\right) =& \left(-1 - \frac{(4i + \sqrt{11})\sqrt{3}}{9}\right)^{1/3}\\
& + \frac{(-1 + \sqrt{11}i)}{3}\left(-1 - \frac{(4i + \sqrt{11})\sqrt{3}}{9}\right)^{-1/3},
\end{align*}
which is a periodic point of $\mathfrak{f}(z)$ of period $3$, and which generates the ring class field $\Omega_2$ of conductor $2$ of the field $K = \mathbb{Q}(\sqrt{-11})$ over $\mathbb{Q}$.  We also prove the following fact.  (See Proposition \ref{prop:17}.) \medskip

\noindent {\bf Theorem 1.4.} {\it Let $w_k \in \textsf{R}_{-4^k d}$ be as in Theorem 1.1 for the discriminant $-4^k d$, where $d \equiv 7$ (mod $8$).  Then the value $2c(w_k/3)$ is a unit in the field $\Omega_{2^{k+1}f}$, for any $k \ge 0$.}
\medskip

The result of this theorem is not generally true for the values $2c(w/3)$, as the above example for $d = 11$ shows.  In that case $N_{\Omega_2/\mathbb{Q}}(2c(w/3)) = 4$.  See Example 2 in Section 6.  Finally, in Section 7, we answer the question of how to determine the correct value of $d$ in Theorems 1.1 and 1.3, if we are given the minimal polynomial of a periodic point of $\mathfrak{f}(z)$.  \medskip

We refer to \cite{b3} or \cite{bs} for Ramanujan's notation, which we use throughout Sections 2-4.  Also see the brief summary of this notation in \cite[pp. 4-7]{am}.

\section{Identities for $c(\tau)$}

\newtheorem{prop}{Proposition}

\newtheorem{thm}{Theorem}

Let us define the function $c(\tau)$ as
\begin{align*}
c(\tau)&=q^{1/3}\prod_{n=1}^\infty\frac{(1-q^{6n-1})(1-q^{6n-5})}{(1-q^{6n-3})^2}\\
&=q^{1/3}\frac{(q;q^6)_\infty(q^5;q^6)_\infty}{(q^3;q^6)_\infty^2}\\
&=q^{1/3}\frac{(q;q^2)_\infty}{(q^3;q^6)_\infty^3}\\
&=\frac{q^{1/3}\chi(-q)}{\chi^3(-q^3)}.
\end{align*}

\begin{prop}The functions $u=c(\tau)$ and $v=c(2\tau)$ satisfy the relation
$$u^2+2uv^2-v=0.$$
\label{prop:1}
\end{prop}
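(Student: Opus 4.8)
The plan is to convert the continued-fraction identity into an identity among Dedekind eta quotients and then verify it by the theory of modular forms. Starting from the last product form in the definition, $c(\tau)=q^{1/3}\chi(-q)/\chi^3(-q^3)$ with $q=e^{2\pi i\tau}$, and using $\chi(-q)=q^{1/24}\eta(\tau)/\eta(2\tau)$ together with $\chi(-q^3)=q^{1/8}\eta(3\tau)/\eta(6\tau)$, one checks that all fractional powers of $q$ cancel and
$$c(\tau)=\frac{\eta(\tau)\,\eta(6\tau)^3}{\eta(2\tau)\,\eta(3\tau)^3}.$$
Hence $u=c(\tau)$ and $v=c(2\tau)=\eta(2\tau)\eta(12\tau)^3/(\eta(4\tau)\eta(6\tau)^3)$ are weight-$0$ eta quotients (the exponents in each sum to zero) built from $\eta(\delta\tau)$ with $\delta\mid 24$.

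Next I clear denominators. Multiplying $u^2+2uv^2-v$ by the common denominator $\eta(2\tau)^2\eta(3\tau)^6\eta(4\tau)^2\eta(6\tau)^3$ turns the claimed relation into the assertion that
$$H(\tau):=\eta(\tau)^2\eta(4\tau)^2\eta(6\tau)^9+2\,\eta(\tau)\eta(2\tau)^3\eta(3\tau)^3\eta(12\tau)^6-\eta(2\tau)^3\eta(3\tau)^6\eta(4\tau)\eta(12\tau)^3$$
vanishes identically. Each of the three summands is a holomorphic eta product of weight $13/2$, so $H$ is a holomorphic modular form of weight $13/2$ (with a known multiplier system) on $\Gamma_0(24)$; multiplying by one further factor $\eta(\delta\tau)$ makes it an honest integral-weight ($7$) cusp form. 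Because $\eta$ has no zeros on the upper half plane, $H$ is holomorphic everywhere on $\mathbb{H}$, so by the valence formula it is determined by finitely many Fourier coefficients. The proof then reduces to checking that the $q$-expansion of $H$ vanishes up to the Sturm bound for this space.

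To carry this out I would first record the leading terms: the first and third summands both begin with $q^{8/3}$, with coefficients $+1$ and $-1$ respectively, so their leading terms cancel, while the middle summand begins only at $q^{11/3}$; this confirms the normalization and gives the first nontrivial check. One then expands each eta product far enough (the relevant Sturm bound on $\Gamma_0(24)$ is modest) and verifies coefficientwise vanishing. I expect the main obstacle to be the bookkeeping around the multiplier system and cusp behavior needed to pin down the exact space of forms — i.e. confirming via Ligozat's criteria that $H$ lies in a space on $\Gamma_0(24)$ for which the finite $q$-expansion check is valid — rather than any single hard computation.

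As an alternative that stays closer to Ramanujan's framework, one can avoid the modular-forms machinery: using $\chi(-q^2)=\chi(q)\chi(-q)$ and $\chi(-q^6)=\chi(q^3)\chi(-q^3)$, the relation $u^2+2uv^2=v$ reduces, after dividing by $q^{2/3}$ and clearing the $\chi(-q^3)$ and $\chi(-q^6)$ denominators, to a single identity among the functions $\chi(\pm q),\chi(\pm q^2),\chi(\pm q^3),\chi(\pm q^6)$, which can be established from the Jacobi triple product and the standard product manipulations in \cite{b3}. I would present the eta-quotient argument as the main proof and keep this theta-function reduction in reserve.
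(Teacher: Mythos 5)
Your proposal is correct in substance, but it takes a genuinely different route from the paper. The paper's proof stays inside Ramanujan's framework: it parametrizes $u=c(\tau)$ and $v=c(2\tau)$ by the moduli $\alpha,\beta$ of degrees $1$ and $3$ (Berndt's Entry 12 (vi),(vii)), obtains $\left(\tfrac{1-\alpha}{(1-\beta)^3}\right)^{1/8}=u^2/v$ and $\left(\tfrac{\beta^3}{\alpha}\right)^{1/8}=2v^2/u$, and then reads off $\tfrac{v}{u^2}-\tfrac{2v^2}{u}=1$ from the classical degree-3 modular equation $\left(\tfrac{(1-\beta)^3}{1-\alpha}\right)^{1/8}-\left(\tfrac{\beta^3}{\alpha}\right)^{1/8}=1$ (Entry 5(i)), finishing by analytic continuation. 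You instead convert everything to eta quotients and invoke Ligozat/Sturm. Your structural claims all check out: your eta expression for $c(\tau)$ is exactly the one the paper derives in the proof of Proposition \ref{prop:2}; the three cleared-denominator terms are computed correctly; each is a product of $13$ eta factors (weight $13/2$); and the orders at infinity are $64/24=8/3$, $88/24=11/3$, $64/24=8/3$ with leading coefficients $1,2,-1$, so the cancellation you note is right. Moreover, the multiplier bookkeeping you defer does work out: for all three terms one has $\sum_\delta \delta r_\delta \equiv 16 \pmod{24}$, $\sum_\delta (24/\delta) r_\delta \equiv 0 \pmod{24}$, and $\prod_\delta \delta^{r_\delta}\equiv 6$ modulo squares, so they share one multiplier system on $\Gamma_0(24)$ and the finite coefficient check (Sturm bound roughly $\tfrac{13}{2}\cdot 48/12=26$) is legitimate, though you leave it unexecuted. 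A cleaner variant of your own idea avoids half-integral weight entirely: divide the relation by $v$ and write it as $u^2/v+2uv=1$; both $u^2/v=\frac{\eta(\tau)^2\eta(4\tau)\eta(6\tau)^9}{\eta(2\tau)^3\eta(3\tau)^6\eta(12\tau)^3}$ and $uv=\frac{\eta(\tau)\eta(12\tau)^3}{\eta(3\tau)^3\eta(4\tau)}$ are weight-$0$ eta quotients satisfying Ligozat's conditions on $\Gamma_0(24)$ with trivial character, so the identity is one of modular functions holomorphic on the upper half-plane and is settled by comparing cusp expansions. In the trade-off, the paper's argument is short and conceptual but leans on the classical catalog of degree-3 modular equations, and it initially proves the identity only for a real range of moduli before continuing analytically; your argument is self-contained and certifiable by a finite computation valid at once for all $\tau$ in the upper half-plane, at the cost of heavier machinery and the remaining (routine) coefficient verification.
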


\begin{proof}
The functions $u=c(\tau)$ and $v=c(2\tau)$ are given by the equations
$$x=\frac{q^{1/3}\chi(-q)}{\chi^3(-q^3)}\ \ \ \text{and}\ \ \ y=\frac{q^{2/3}\chi(-q^2)}{\chi^3(-q^6)}.$$
Let $\sqrt{\alpha}$ and $\sqrt{\beta}$ be the moduli associated with the variables $q$ and $q^3$, respectively, so that
$$q=\exp{\left(-\pi\,\frac{_2F_1(\frac{1}{2},\frac{1}{2};1;1-\alpha)}{_2F_1(\frac{1}{2},\frac{1}{2};1;\alpha)}\right)}\ \ \ \text{and}\ \ \ q^3=\exp{\left(-\pi\,\frac{_2F_1(\frac{1}{2},\frac{1}{2};1;1-\beta)}{_2F_1(\frac{1}{2},\frac{1}{2};1;\beta)}\right)}.$$
See \cite{br}.  Then from \cite[p. 124, Entry 12 (vi), (vii)]{b3} (with $q=e^{-y}, x = \alpha$, respectively $q^3 = e^{-y}, x = \beta$), we can see that
$$u=2^{-1/3}\left(\frac{(1-\alpha)^2}{\alpha}\right)^{1/24}\left(\frac{\beta}{(1-\beta)^2}\right)^{1/8}\ \ \ \text{and}\ \ \ v=2^{-2/3}\left(\frac{1-\alpha}{\alpha^2}\right)^{1/24}\left(\frac{\beta^2}{1-\beta}\right)^{1/8}.$$
It follows that
$$\left(\frac{1-\alpha}{(1-\beta)^3}\right)^{1/8}=\frac{u^2}{v}\ \ \ \text{and}\ \ \ \left(\frac{\beta^3}{\alpha}\right)^{1/8}=\frac{2v^2}{u}.$$
Using the second equality in \cite[p. 230, Entry 5 (i)]{b3}, which is
$$\left(\frac{(1-\beta)^3}{1-\alpha}\right)^{1/8}-\left(\frac{\beta^3}{\alpha}\right)^{1/8}=1,$$
we obtain that
$$\frac{v}{u^2}-\frac{2v^2}{u}=1,$$
which simplifies to give us the desired relation between $c(\tau)$ and $c(2\tau)$.  This holds for $\alpha, \beta$ satisfying $0 < \alpha, \beta < 1$.  Hence, it holds for infinitely many values of $\tau$ which are bounded away from the real axis, and consequently for all $\tau$ satisfying $\mathfrak{R}(\tau) > 0$, by the principle of analytic continuation.  
\end{proof}

See \cite[p. 345, Theorem 1, (2.2)]{hc} for a very similar proof of this identity.  Following Proposition \ref{prop:9} we will give an alternate proof of this relation.

\begin{prop} The following identity holds:
$$c\left(\frac{-1}{\tau}\right)=\frac{1/2-c(\tau/2)}{1+c(\tau/2)}.$$
\label{prop:2}
\end{prop}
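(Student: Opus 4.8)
The plan is to prove the identity by the same modulus-parametrization method used in Proposition \ref{prop:1}, exploiting the fact that the map $\tau \mapsto -1/\tau$ replaces the modulus $\alpha$ of the nome by its complement $1-\alpha$. Writing $\alpha$ for the modulus attached to the nome $q$ at argument $\tau$ and $\beta$ for the modulus attached to $q^3$ (argument $3\tau$), recall from the proof of Proposition \ref{prop:1} that
$$c(\tau)=2^{-1/3}\left(\frac{(1-\alpha)^2}{\alpha}\right)^{1/24}\left(\frac{\beta}{(1-\beta)^2}\right)^{1/8}.$$
The first step is to record what this formula gives at the three arguments in play. At $\tau/2$ the relevant moduli are $\delta$ (for the nome $q^{1/2}$) and $\epsilon$ (for $q^{3/2}$); at $-1/\tau$ the modulus of the nome is the complement $1-\alpha$, while the modulus attached to $q^3$ becomes $1-\gamma$, where $\gamma$ is the modulus of $q^{1/3}$ (since $3\cdot(-1/\tau)=-1/(\tau/3)$). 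Substituting these into the displayed formula yields closed expressions for $c(\tau/2)$ and for
$$c(-1/\tau)=2^{-1/3}\left(\frac{\alpha^2}{1-\alpha}\right)^{1/24}\left(\frac{1-\gamma}{\gamma^2}\right)^{1/8}.$$

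The second step is to connect the auxiliary moduli $\gamma,\delta,\epsilon$ to $\alpha,\beta$ by the appropriate modular equations. Since $q=(q^{1/3})^3$, the pair $(\gamma,\alpha)$ satisfies the cubic modular equation of \cite[p. 230, Entry 5(i)]{b3} — the very relation used in Proposition \ref{prop:1}, now with $(\gamma,\alpha)$ in place of $(\alpha,\beta)$. Similarly $\delta$ is linked to $\alpha$ by a Landen (degree-two) transformation, and $\epsilon$ is linked to $\delta$ cubically (and to $\beta$ by Landen), giving a consistent web of relations. Using these I would eliminate $\gamma,\delta,\epsilon$ and reduce the target identity $c(-1/\tau)=\bigl(\tfrac12-c(\tau/2)\bigr)/\bigl(1+c(\tau/2)\bigr)$ to a single algebraic identity in one modular parameter, valid for $0<\alpha<1$; the proof then finishes, exactly as in Proposition \ref{prop:1}, by analytic continuation from real moduli to all $\tau$ with $\mathfrak{R}(\tau)>0$.

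As an independent check and rigor backbone, I would run the transformation through eta quotients. From $\chi(-q)=(q;q)_\infty/(q^2;q^2)_\infty$ one gets $c(\tau)=\eta(\tau)\eta(6\tau)^3/\bigl(\eta(2\tau)\eta(3\tau)^3\bigr)$, and applying $\eta(-1/\tau)=\sqrt{-i\tau}\,\eta(\tau)$ to each factor collapses all powers of $\sqrt{-i\tau}$ and produces the clean constant $\tfrac12$:
$$c(-1/\tau)=\frac12\,\frac{\eta(\tau)\,\eta(\tau/6)^3}{\eta(\tau/2)\,\eta(\tau/3)^3}.$$
The claim is then equivalent to the eta-quotient identity $c(\tau/2)=(1-D)/(2+D)$ with $D=2c(-1/\tau)$, which can be confirmed by matching the first two terms of the $q$-expansion (both sides of the stated identity begin $\tfrac12-\tfrac32 q^{1/6}+\cdots$) once a valence bound pins down how many coefficients suffice.

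The main obstacle I anticipate is the elimination step in the modulus approach: the expressions for $c(-1/\tau)$ and $c(\tau/2)$ involve $24$th and $8$th roots of rational functions of four interdependent moduli, and forcing the M\"obius relation requires careful simultaneous use of the cubic and Landen equations to collapse the nested radicals. The eta-quotient route avoids the radicals but shifts the difficulty to justifying the valence bound for the congruence group underlying $c(\tau/2)$; whichever line one follows, this bookkeeping is the crux, while the complementary-modulus input and the final analytic continuation are routine.
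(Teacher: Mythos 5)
Your eta-quotient ``check'' is in fact the start of the paper's own proof: writing $c(\tau)=\eta(\tau)\,\eta^3(6\tau)/(\eta(2\tau)\,\eta^3(3\tau))$ and applying $\eta(-1/\tau)=\sqrt{-i\tau}\,\eta(\tau)$ does give
$$c(-1/\tau)=\frac{1}{2}\,\frac{\eta(\tau)\,\eta^3(\tau/6)}{\eta(\tau/2)\,\eta^3(\tau/3)},$$
and your two-term expansion is consistent. But from this point neither of your completions is actually executed, and that is where the whole content of the proof lies. In the valence-bound route you never identify the congruence subgroup on which both sides of the target identity live (they are series in $q^{1/6}$, so one needs a group of the size of $\Gamma(12)$ or a conjugate of it), never compute the Sturm/valence bound on the compactified curve, and match only two coefficients; without the bound, matching finitely many coefficients proves nothing, and computing that bound honestly is not a triviality one can wave at. In the modulus route, the elimination of $\gamma,\delta,\epsilon$ through the web of degree-two (Landen) and degree-three relations among five interdependent moduli, with $24$th and $8$th roots everywhere, is exactly the step you defer; Entry 5(i) is a single identity among particular eighth-root combinations, and you give no argument that it, together with the Landen relations, collapses the nested radicals to the desired M\"obius relation. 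So as written the proposal has a genuine gap: the decisive step is named but not done, in either route.

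For comparison, the paper closes this gap without any valence bound and without touching the moduli: after the eta transformation it replaces $\tau$ by $6\tau$, rewrites the resulting product quotient as
$$\frac{1}{2}\cdot\frac{\varphi(-q)}{\varphi(-q^9)}\cdot\frac{q\,\psi(q^9)}{\psi(q)}\cdot\frac{(q^9;q^{18})_\infty^3}{q\,(q^3;q^6)_\infty},$$
and then invokes Ramanujan's identities $1-2c=\varphi(-q^{1/3})/\varphi(-q^3)$ and $1+1/c=\psi(q^{1/3})/\bigl(q^{1/3}\,\psi(q^3)\bigr)$ (Berndt, Entry 1(i),(ii)), together with the product definition of $c$, to recognize the three factors as $1-2c(3\tau)$, $\bigl(1+1/c(3\tau)\bigr)^{-1}$ and $1/c(3\tau)$; this yields $c(-1/6\tau)=\tfrac{1}{2}(1-2c(3\tau))/(1+c(3\tau))$ identically, and replacing $\tau$ by $\tau/6$ finishes. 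Those two $\varphi$--$\psi$ identities are the missing ingredient your outline needs; with them the proof is a short computation, and without them you are left with an unproven bound or an unexecuted elimination.
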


\begin{proof}
We have
\begin{align*}
c(\tau)&=q^{1/3}\,\frac{(q;q^6)_\infty(q^5;q^6)_\infty}{(q^3;q^6)_\infty^2}\\
&=q^{1/3}\,\frac{(q;q^2)_\infty}{(q^3;q^6)_\infty^3}\\
&=q^{1/3}\,\frac{(q;q)_\infty(q^6;q^6)_\infty^3}{(q^2;q^2)_\infty(q^3;q^3)_\infty^3}\\
&=\frac{\eta(\tau)\,\eta^3(6\tau)}{\eta(2\tau)\,\eta^3(3\tau)},
\end{align*}
where $\eta(\tau)=q^{1/24}\,(q;q)_\infty$. \medskip

Using the formula $\eta(-1/\tau)=\sqrt{-i\tau}\,\eta(\tau)$ from \cite[p. 236, Cor. 12.19]{co}, we get
\begin{align*}
c(-1/\tau)&=\frac{\eta(-1/\tau)\,\eta^3(-6/\tau)}{\eta(-2/\tau)\,\eta^3(-3/\tau)}\\
&=\frac{\sqrt{-i\tau}\,\eta(\tau)\ \big(\sqrt{-i\tau/6}\big)^3\,\eta^3(\tau/6)}{\sqrt{-i\tau/2}\,\eta(\tau/2)\ \big(\sqrt{-i\tau/3}\big)^3\,\eta^3(\tau/3)}\\
&=\frac{1}{2}\,\frac{\eta(\tau)\,\eta^3(\tau/6)}{\eta(\tau/2)\,\eta^3(\tau/3)}\\
&=\frac{1}{2}\,\frac{(q;q)_\infty(q^{1/6};q^{1/6})_\infty^3}{(q^{1/2};q^{1/2})_\infty(q^{1/3};q^{1/3})_\infty^3}.
\end{align*}
Hence, if we replace $\tau$ by $6\tau$, we obtain the following (using $\varphi(q) = (-q;q^2)_\infty^2 (q^2; q^2)_\infty$; see \cite[p. 11]{bs}):
\begin{align*}
c(-1/6\tau)&=\frac{1}{2}\cdot\frac{(q;q)_\infty^3}{(q^2;q^2)_\infty^3}\cdot\frac{(q^6;q^6)_\infty}{(q^3;q^3)_\infty}\\
&=\frac{1}{2}\cdot(q;q^2)_\infty^3\cdot\frac{1}{(q^3;q^6)_\infty}\\
&=\frac{1}{2}\cdot\frac{(q;q^2)_\infty^2(q^2;q^2)_\infty}{(q^9;q^{18})_\infty^2(q^{18};q^{18})_\infty}\cdot\frac{(q;q^2)_\infty(q^{18};q^{18})_\infty}{(q^2;q^2)_\infty(q^9;q^{18})_\infty}\cdot\frac{(q^9;q^{18})_\infty^3}{(q^3;q^6)_\infty}\\
&=\frac{1}{2}\cdot\frac{\varphi(-q)}{\varphi(-q^9)}\cdot\frac{q\,\psi(q^9)}{\psi(q)}\cdot\frac{(q^9;q^{18})_\infty^3}{q\,(q^3;q^6)_\infty}\\
&=\frac{1}{2}\cdot\big(1-2c(3\tau)\big)\cdot\left(\frac{1}{1+\frac{1}{c(3\tau)}}\right)\cdot\frac{1}{c(3\tau)}\\
&=\frac{1}{2}\cdot\frac{1-2c(3\tau)}{1+c(3\tau)}.
\end{align*}
In the penultimate line we used the results that $\displaystyle1-2c=\frac{\varphi(-q^{1/3})}{\varphi(-q^3)}$ and that $\displaystyle1+\frac{1}{c}=\frac{\psi(q^{1/3})}{q^{1/3}\,\psi(q^3)}$, both from \cite[p. 345, Entry 1 (i), (ii)]{b3}; and the definition of $c(\tau)$.  Now replacing $\tau$ by $\tau/6$ in the last equality gives us the result.
\end{proof}

An alternate proof of the above result can also be found in \cite[p. 373, Theorem 6.12]{sc}. \medskip

For later use we note that Proposition \ref{prop:2} can be expressed using the linear fractional map $A(x) = \frac{2-2x}{2+x}$, as follows:
$$2c(-1/\tau) = A(2c(\tau/2)), \ \ A(x) = \frac{2-2x}{2+x}.$$

\begin{prop} We have the following identity:
$$c^3\left(\frac{-1}{6\tau}\right)=\frac{1/8-c^3(\tau)}{1+c^3(\tau)}=t(c^3(\tau)).$$
\label{prop:3}
\end{prop}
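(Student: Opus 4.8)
The plan is to derive Proposition~3 from Proposition~2 together with one genuinely new ingredient: a cubic modular equation relating $c(\tau)$ and $c(3\tau)$. First I would apply Proposition~2 with $\tau$ replaced by $6\tau$, which gives at once
\[
c\!\left(\frac{-1}{6\tau}\right)=\frac{1/2-c(3\tau)}{1+c(3\tau)}.
\]
Cubing both sides reduces the proposition to the single identity
\[
\frac{\big(1/2-c(3\tau)\big)^3}{\big(1+c(3\tau)\big)^3}=\frac{1/8-c^3(\tau)}{1+c^3(\tau)}=t\big(c^3(\tau)\big),
\]
which I will call $(\star)$. This is a modular equation purely between $c(\tau)$ and $c(3\tau)$. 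A quick check shows that $t(x)=(1/8-x)/(1+x)$ is an involution, $t(t(x))=x$, which is the expected consistency with the fact that $\tau\mapsto-1/6\tau$ is the Fricke involution at level $6$.

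To prove $(\star)$ I would pass to eta quotients, starting from $c(\tau)=\eta(\tau)\eta^3(6\tau)/\big(\eta(2\tau)\eta^3(3\tau)\big)$ and the formula $c(-1/6\tau)=\tfrac12\,\eta(6\tau)\eta^3(\tau)/\big(\eta(3\tau)\eta^3(2\tau)\big)$ already obtained in the proof of Proposition~2. These combine into the suggestive single-variable relation
\[
2c\!\left(\frac{-1}{6\tau}\right)=c(\tau)\left(\frac{\eta(\tau)\eta(3\tau)}{\eta(2\tau)\eta(6\tau)}\right)^{\!2}.
\]
Writing $A=(q;q^2)_\infty$ and $B=(q^3;q^6)_\infty$, so that $c^3(\tau)=qA^3/B^9$ and $8c^3(-1/6\tau)=A^9/B^3$, the identity $(\star)$ is equivalent to $A^9/B^3=(1-8c^3(\tau))/(1+c^3(\tau))$, and after clearing denominators to the weight-$12$ eta-product identity
\[
\eta^{12}(2\tau)\eta^{12}(3\tau)-\eta^{12}(\tau)\eta^{12}(6\tau)=\eta^9(\tau)\eta^3(2\tau)\eta^9(3\tau)\eta^3(6\tau)+8\,\eta^3(\tau)\eta^9(2\tau)\eta^3(3\tau)\eta^9(6\tau).
\]
Every term here is a holomorphic modular form of weight $12$ on $\Gamma_0(6)$ (with the same multiplier) vanishing at all cusps; since a nonzero such form has total zero order $12$ by the valence formula (the index of $\Gamma_0(6)$ is $12$), it suffices to check that the two $q$-expansions agree through order $q^{12}$. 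I have verified agreement through $O(q^3)$ as partial confirmation.

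Alternatively, and more in keeping with the theta-function style of this section, I would prove $(\star)$ directly from Ramanujan's theta functions. From the entries already invoked for Proposition~2, $1-2c(\tau)=\varphi(-q^{1/3})/\varphi(-q^3)$ and $1+1/c(\tau)=\psi(q^{1/3})/\big(q^{1/3}\psi(q^3)\big)$, replacing $q$ by $q^3$ gives $1-2c(3\tau)=\varphi(-q)/\varphi(-q^9)$ and $1+1/c(3\tau)=\psi(q)/\big(q\psi(q^9)\big)$. Factoring $1-8c^3=(1-2c)(1+2c+4c^2)$ and $1+c^3=(1+c)(1-c+c^2)$ and substituting these theta quotients, $(\star)$ should reduce to a cubic modular equation for $\varphi$ and $\psi$ of the kind tabulated by Berndt, exactly as Propositions~1 and~2 were read off single entries. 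As in Proposition~1, the identity first holds for $\alpha,\beta$ real and then for all $\tau$ with $\mathfrak{R}(\tau)>0$ by analytic continuation.

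The main obstacle is $(\star)$ itself: it encodes a level-$3$ (cubic) modular relation and is genuinely new information, not a formal consequence of Propositions~1 and~2, which carry only the $S$-transformation and a level-$2$ relation. Concretely, the difficulty is either the cusp-order bookkeeping needed to make the valence-formula comparison fully rigorous (ensuring each eta product is handled with the correct multiplier and vanishing order), or the task of locating the precise cubic $\varphi$--$\psi$ identity in the theta-function route. Once $(\star)$ is in hand, Proposition~3 follows immediately upon identifying $t(x)=(1/8-x)/(1+x)$.
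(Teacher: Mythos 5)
Your reduction is sound as far as it goes: replacing $\tau$ by $6\tau$ in Proposition 2 and cubing correctly reduces the proposition to your relation $(\star)$ between $c(\tau)$ and $c(3\tau)$, and your translation of $(\star)$ into the weight-$12$ eta-product identity is algebraically correct. But the proposal never actually proves $(\star)$, and $(\star)$ is the entire content of the proposition --- as you yourself admit, it is ``genuinely new information'' not contained in Propositions 1 and 2. Route A (valence formula on $\Gamma_0(6)$) is a workable plan, but it is left unexecuted: one must verify that all four eta products carry the same multiplier system, handle the half-integral expansions (the terms begin at $q^{5/2}$ and $q^{7/2}$), and check agreement of coefficients through order $q^{12}$; you have checked only through $O(q^3)$. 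Route B is vaguer still: the ``cubic modular equation for $\varphi$ and $\psi$'' is exactly the missing input, and it is never identified. So the proposal is a correct reduction together with an honest acknowledgement that the decisive step is missing --- a genuine gap.

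It is worth seeing how the paper closes that gap, because the needed identities are precisely the ones your Route B gropes for, except in quartic rather than linear form: $1-8c^3(\tau)=\varphi^4(-q)/\varphi^4(-q^3)$ and $1+1/c^3(\tau)=\psi^4(q)/\big(q\,\psi^4(q^3)\big)$, taken from the proofs of \cite[p. 346-347, Entry 1 (i), (iii)]{b3}. With these, no relation between $c(\tau)$ and $c(3\tau)$ is needed as input at all: the paper cubes the intermediate formula $c(-1/6\tau)=\tfrac12\,(q;q^2)_\infty^3/(q^3;q^6)_\infty$ from the proof of Proposition 2 and regroups the infinite products as
$$c^3\left(\frac{-1}{6\tau}\right)=\frac{1}{8}\cdot\frac{\varphi^4(-q)}{\varphi^4(-q^3)}\cdot\frac{q\,\psi^4(q^3)}{\psi^4(q)}\cdot\frac{(q^3;q^6)_\infty^9}{q\,(q;q^2)_\infty^3}
=\frac{1}{8}\,\big(1-8c^3(\tau)\big)\cdot\frac{1}{1+\frac{1}{c^3(\tau)}}\cdot\frac{1}{c^3(\tau)},$$
using $c^3(\tau)=q\,(q;q^2)_\infty^3/(q^3;q^6)_\infty^9$, which is the claimed identity. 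Note also that the logical order in the paper is the reverse of yours: the modular equation between $c(\tau)$ and $c(3\tau)$ (your $(\star)$, i.e.\ the Corollary's $g(x,y)=0$) is \emph{deduced} from Propositions 2 and 3, not used to prove Proposition 3. Your route is not circular, but it forces you to prove the hardest statement first; to complete it along Route A you would still owe the full valence-formula computation, with the multiplier and cusp-order bookkeeping carried out rigorously.
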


\begin{proof}
Using the equation for $c(-1/6\tau)$ from the proof of Proposition \ref{prop:2}, we can write
\begin{align*}
c^3\left(\frac{-1}{6\tau}\right)&=\frac{1}{8}\cdot(q;q^2)_\infty^9\cdot\frac{1}{(q^3;q^6)_\infty^3}\\
&=\frac{1}{8}\cdot\frac{(q;q^2)_\infty^8(q^2;q^2)_\infty^4}{(q^3;q^6)_\infty^8(q^6;q^6)_\infty^4}\cdot\frac{(q^6;q^6)_\infty^4(q;q^2)_\infty^4}{(q^3;q^6)_\infty^4(q^2;q^2)_\infty^4}\cdot\frac{(q^3;q^6)_\infty^9}{(q;q^2)_\infty^3};
\end{align*}
so that
\begin{align*}
c^3\left(\frac{-1}{6\tau}\right)&=\frac{1}{8}\cdot\frac{\varphi^4(-q)}{\varphi^4(-q^3)}\cdot\frac{q\,\psi^4(q^3)}{\psi^4(q)}\cdot\frac{(q^3;q^6)_\infty^9}{q\,(q;q^2)_\infty^3}\\
&=\frac{1}{8}\cdot\big(1-8c^3(\tau)\big)\cdot\left(\frac{1}{1+\frac{1}{c^3(\tau)}}\right)\cdot\frac{1}{c^3(\tau)}\\
&=\frac{1}{8}\cdot\frac{1-8c^3(\tau)}{1+c^3(\tau)},
\end{align*}
where again in the penultimate line we used the facts that $\displaystyle1-8c^3=\frac{\varphi^4(-q)}{\varphi^4(-q^3)}$ and that $\displaystyle1+\frac{1}{c^3}=\frac{\psi^4(q)}{q\,\psi^4(q^3)}$, both taken from the proofs of \cite[p. 346-347, Entry 1 (i), (iii)]{b3}.
\end{proof}

\noindent {\bf Corollary.} {\it The functions $x=c(\tau)$ and $y=c(3\tau)$ satisfy the relation}
$$g(x,y)=x^3(4y^2+2y+1)-y(y^2-y+1)=0.$$

\begin{proof}
From the above two propositions, we see that
\begin{equation*}
\left(\frac{1-2c(3\tau)}{1+c(3\tau)}\right)^3=\frac{1-8c^3(\tau)}{1+c^3(\tau)},
\end{equation*}
which upon solving for $c^3(\tau)$ and rearranging gives us the result.
\end{proof}

\noindent {\bf Remark.}  $g(x,y)$ is the polynomial mentioned in the abstract.  The identity in this corollary can be re-written as
$$c^3(\tau)=c(3\tau)\,\frac{1-c(3\tau)+c^2(3\tau)}{1+2c(3\tau)+4c^2(3\tau)}.$$
This is analogous to the identity for the Rogers-Ramanujan continued fraction $r(\tau)$:
$$r^5(\tau)=r(5\tau)\,\frac{1-2r(5\tau)+4r^2(5\tau)-3r^3(5\tau)+r^4(5\tau)}{1+3r(5\tau)+4r^2(5\tau)+2r^3(5\tau)+r^4(5\tau)}.$$\bigskip

\section{The modular functions $\alpha(\tau)$ and $\beta(\tau)$}
\noindent Define the modular functions $\alpha(\tau)$ and $\beta(\tau)$ by
\begin{align}
\label{eqn:3.1} \alpha(\tau)&=3+27\frac{\eta^3(9\tau)}{\eta^3(\tau)}=3+27q\prod_{n=1}^\infty\frac{(1-q^{9n})^3}{(1-q^n)^3};\\
\label{eqn:3.2} \beta(\tau)&=3+\frac{\eta^3(\tau/3)}{\eta^3(3\tau)}=3+q^{-1/3}\prod_{n=1}^\infty\frac{(1-q^{n/3})^3}{(1-q^{3n})^3}.
\end{align}

\noindent The functions $\alpha(\tau)$ and $\beta(\tau)$ lie in the modular function fields $\textsf{K}_{\Gamma(9)}$ and $\textsf{K}_{\Gamma(3)}$, respectively, and satisfy the equation
$$\textrm{Fer}_3:\ \ 27\alpha^3+27\beta^3=\alpha^3\beta^3.$$
See \cite[Theorem 12]{ms}.

\begin{prop}
If $c=c(\tau)$ is Ramanujan's cubic continued fraction, then
\begin{align}
\alpha^3(\tau)-27&=\frac{729c^3}{(c^3+1)(8c^3-1)^2},\label{eqn:3.3}\\
\beta(\tau)&=\frac{4c^3+1}{c}.\label{eqn:3.4}
\end{align}
\label{prop:4}
\end{prop}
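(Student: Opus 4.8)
The plan is to establish the $\beta$-formula (\ref{eqn:3.4}) first by a short theta-function computation, and then to read off the $\alpha$-formula (\ref{eqn:3.3}) from it purely formally, using the Fermat-type relation $\mathrm{Fer}_3$. For (\ref{eqn:3.4}), I would begin from the product expansion (\ref{eqn:3.2}), which gives $\beta(\tau)-3 = q^{-1/3}(q^{1/3};q^{1/3})_\infty^3/(q^3;q^3)_\infty^3$, and rewrite each infinite-product cube by means of the identity $(q;q)_\infty^3=\varphi^2(-q)\psi(q)$ — immediate from $\varphi(-q)=(q;q)_\infty^2/(q^2;q^2)_\infty$ and $\psi(q)=(q^2;q^2)_\infty^2/(q;q)_\infty$. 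Taking this at the two arguments $q^{1/3}$ and $q^3$ and dividing yields
$$\beta(\tau)-3=\frac{\varphi^2(-q^{1/3})}{\varphi^2(-q^3)}\cdot\frac{\psi(q^{1/3})}{q^{1/3}\,\psi(q^3)}.$$

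Next I would substitute the two Ramanujan identities already invoked in the proof of Proposition \ref{prop:2} (from \cite[p. 345, Entry 1 (i), (ii)]{b3}), namely $1-2c=\varphi(-q^{1/3})/\varphi(-q^3)$ and $1+1/c=\psi(q^{1/3})/\big(q^{1/3}\psi(q^3)\big)$. These turn the right-hand side into $(1-2c)^2\cdot(c+1)/c$. The remaining work for (\ref{eqn:3.4}) is then only the elementary factorization $(2c-1)^2(c+1)=4c^3-3c+1$, which gives $\beta-3=(4c^3-3c+1)/c$ and hence $\beta=(4c^3+1)/c$.

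For (\ref{eqn:3.3}) I would not return to the $q$-series. Solving $\mathrm{Fer}_3:\ 27\alpha^3+27\beta^3=\alpha^3\beta^3$ for $\alpha^3$ gives $\alpha^3=27\beta^3/(\beta^3-27)$, so that $\alpha^3-27=729/(\beta^3-27)$. Inserting $\beta=(4c^3+1)/c$ produces $\beta^3-27=\big((4c^3+1)^3-27c^3\big)/c^3$, and with $u=c^3$ the numerator factors as $(4u+1)^3-27u=(u+1)(8u-1)^2$. This delivers $\alpha^3-27=729c^3/\big((c^3+1)(8c^3-1)^2\big)$, which is (\ref{eqn:3.3}). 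The only non-bookkeeping ingredients are the single theta identity $(q;q)_\infty^3=\varphi^2(-q)\psi(q)$ and the two Entry 1 identities; accordingly the main point to watch is purely notational — tracking the fractional $q$-powers so that the lone factor $q^{-1/3}$ in (\ref{eqn:3.2}) is exactly absorbed into the $\psi$-quotient and matches the $1+1/c$ identity. Once the arguments $q^{1/3}$ and $q^3$ are aligned, both formulas follow with no analytic subtlety.
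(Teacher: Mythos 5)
Your proposal is correct and takes essentially the same approach as the paper: both proofs hinge on the intermediate identity $\beta(\tau)-3=\dfrac{\varphi^2(-q^{1/3})\,\psi(q^{1/3})}{q^{1/3}\,\varphi^2(-q^3)\,\psi(q^3)}$ combined with $1-2c=\varphi(-q^{1/3})/\varphi(-q^3)$ and $1+1/c=\psi(q^{1/3})/\bigl(q^{1/3}\psi(q^3)\bigr)$, and both deduce (\ref{eqn:3.3}) from the Fermat relation via the same factorization $(4c^3+1)^3-27c^3=(c^3+1)(8c^3-1)^2$. The only differences are cosmetic: you traverse the chain from $\beta$ to $c$, citing the Entry 1 identities directly (as the paper itself does in Proposition \ref{prop:2}) and proving $(q;q)_\infty^3=\varphi^2(-q)\psi(q)$ from the product formulas, whereas the paper goes from $c$ to $\beta$, re-deriving those Entry 1 identities from Berndt's page-49 corollaries and citing Entry 24(ii) for the cube identity.
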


\begin{proof}
First observe that
\begin{align*}
\frac{4c^3+1}{c}&=\frac{3c+(4c^3-3c+1)}{c}\\
&=\frac{3c+(1-2c)^2(1+c)}{c}\\
&=3+\frac{(1-2c)^2(1+c)}{c}.
\end{align*}
Substituting
$$c=q^{1/3}\frac{(q,q^6)_\infty(q^5,q^6)_\infty}{(q^3,q^6)_\infty^2}=q^{1/3}\frac{(q,q^6)_\infty(q^5,q^6)_\infty(q^6,q^6)_\infty}{(q^3,q^6)_\infty^2(q^6,q^6)_\infty}=q^{1/3}\frac{f(-q,-q^5)}{\varphi(-q^3)}$$
and
\begin{align*}
c&=q^{1/3}\frac{(q,q^6)_\infty(q^3,q^6)_\infty(q^5,q^6)_\infty}{(q^3,q^6)_\infty^3}=q^{1/3}\frac{(q,q^2)_\infty}{(q^3,q^6)_\infty^3}=q^{1/3}\frac{(-q^3,q^3)_\infty}{(-q,q)_\infty(q^3,q^6)_\infty^2}\\
&=q^{1/3}\frac{1}{(-q,q^3)_\infty(-q^2,q^3)_\infty(q^3,q^6)_\infty^2}=q^{1/3}\frac{(q^6,q^6)_\infty}{(-q,q^3)_\infty(-q^2,q^3)_\infty(q^3,q^3)_\infty(q^3,q^6)_\infty}\\
&=q^{1/3}\frac{1}{(-q,q^3)_\infty(-q^2,q^3)_\infty(q^3,q^3)_\infty}\cdot\frac{(q^6,q^6)_\infty}{(q^3,q^6)_\infty}=q^{1/3}\frac{\psi(q^3)}{f(q,q^2)},
\end{align*}
into the above, we get
$$\frac{4c^3+1}{c}=3+\left(1-2q^{1/3}\frac{f(-q,-q^5)}{\varphi(-q^3)}\right)^2\left(1+\frac{f(q,q^2)}{q^{1/3}\psi(q^3)}\right).$$
Now from \cite[p. 49, Corollary (i), (ii)]{b3}, upon replacing $q$ by $-q^{1/3}$ and $q^{1/3}$, respectively, we obtain
\begin{align*}
\frac{4c^3+1}{c}&=3+\frac{\big(\varphi(-q^3)-2q^{1/3}f(-q,-q^5)\big)^2}{\varphi^2(-q^3)}\cdot\frac{\big(q^{1/3}\psi(q^3)+f(q,q^2)\big)}{q^{1/3}\psi(q^3)}\\
&=3+\frac{\varphi^2(-q^{1/3})}{\varphi^2(-q^3)}\cdot\frac{\psi(q^{1/3})}{q^{1/3}\psi(q^3)}.
\end{align*}
Finally, using \cite[p. 39, Entry 24 (ii)]{b3} and $f(-q) = (q; q)_\infty = q^{-1/24} \eta(\tau)$ from \cite[p.11, (1.3.15)]{bs}
\begin{align*}
\frac{4c^3+1}{c}&=3+\frac{\varphi^2(-q^{1/3})\psi(q^{1/3})}{q^{1/3}\varphi^2(-q^3)\psi(q^3)}\\
&=3+\frac{f^3(-q^{1/3})}{q^{1/3}f^3(-q^3)}\\
&=3+\frac{\eta^3(\tau/3)}{\eta^3(3\tau)}\\
&=\beta(\tau).
\end{align*}
This proves (\ref{eqn:3.4}). A similar proof can also be found in \cite[p. 348, Entry 1(iv)]{b3}. \medskip

From the equation $27\alpha^3+27\beta^3=\alpha^3\beta^3$ we obtain
\begin{align*}
\alpha^3-27&=\frac{729}{\beta^3-27}\\
&=\frac{729c^3}{(4c^3+1)^3-27c^3}\\
&=\frac{729c^3}{(4c^3-3c+1)(16c^6+12c^4+8c^3+9c^2+3c+1)}\\
&=\frac{729c^3}{(2c-1)^2(c+1)(4c^2+2c+1)^2(c^2-c+1)}\\
&=\frac{729c^3}{(c^3+1)(8c^3-1)^2},
\end{align*}
giving us (\ref{eqn:3.3}).
\end{proof}

\begin{prop}
If $c=c(\tau)$ is Ramanujan's cubic continued fraction, then we have the following relations between $c$ and the $j$-invariant $j(\tau)$:
\begin{align}
j(\tau)&=\frac{(64c^9+48c^6+228c^3+1)^3(4c^3+1)^3}{c^3(c^3+1)^3(8c^3-1)^6},\label{eqn:3.5}\\
j(3\tau)&=\frac{(64c^9+48c^6-12c^3+1)^3(4c^3+1)^3}{c^9(c^3+1)(8c^3-1)^2}.\label{eqn:3.6}
\end{align}
\label{prop:5}
\end{prop}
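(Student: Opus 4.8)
The plan is to route both identities through the modular functions $\alpha$ and $\beta$, for which Proposition \ref{prop:4} already provides explicit expressions in $c$, and to exploit the fact that $j$ is the same rational function of $\alpha$ at $\tau$ as it is of $\beta$ at $3\tau$. The essential input I would take from \cite{ms} is the pair of formulas
$$j(\tau)=\frac{\alpha^3(\alpha^3-24)^3}{\alpha^3-27},\qquad j(3\tau)=\frac{\beta^3(\beta^3-24)^3}{\beta^3-27}.$$
That these hold with a \emph{common} rational function $R(X)=X^3(X^3-24)^3/(X^3-27)$ is the one genuinely nontrivial ingredient. It reflects that $\alpha\in\textsf{K}_{\Gamma(9)}$ and $\beta\in\textsf{K}_{\Gamma(3)}$ are, after the additive shift by $3$, Hauptmodul-type generators for the relevant level-$3$ and level-$9$ function fields, so $j$ must be a rational function of each; the explicit shape of $R$, and in particular its degree $12$ matching $[\mathrm{PSL}_2(\mathbb{Z}):\overline{\Gamma_0(9)}]=12$, is then pinned down by comparing the first few $q$-expansion coefficients. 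One checks the leading orders directly: $\alpha\to 3$ forces $R(\alpha)\sim q^{-1}=j(\tau)$, while $\beta\sim q^{-1/3}$ forces $R(\beta)\sim q^{-3}=j(3\tau)$. I would cite this from \cite{ms} or, failing that, prove it by the valence formula.

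Granting these, (\ref{eqn:3.5}) is pure algebra. Writing $D=(c^3+1)(8c^3-1)^2$, Proposition \ref{prop:4} gives $\alpha^3-27=729c^3/D$ directly, and adding back the constants yields
$$\alpha^3=\frac{27(4c^3+1)^3}{D},\qquad \alpha^3-24=\frac{3(64c^9+48c^6+228c^3+1)}{D},$$
where the first simplification rests on the polynomial identity
$$27(c^3+1)(8c^3-1)^2+729c^3=27(4c^3+1)^3,$$
equivalently $(4c^3+1)^3-27c^3=(c^3+1)(8c^3-1)^2$. Substituting these three quantities into $R(\alpha)$ and cancelling the common powers of $D$ collapses everything to (\ref{eqn:3.5}).

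For (\ref{eqn:3.6}) I would repeat the computation with $\beta=(4c^3+1)/c$ from Proposition \ref{prop:4}. Then $\beta^3=(4c^3+1)^3/c^3$, and the \emph{same} polynomial identity above gives
$$\beta^3-24=\frac{64c^9+48c^6-12c^3+1}{c^3},\qquad \beta^3-27=\frac{(c^3+1)(8c^3-1)^2}{c^3}.$$
Feeding these into $R(\beta)$ and simplifying produces (\ref{eqn:3.6}). It is worth noting, for internal consistency, that combining $\alpha^3-27=729c^3/D$ with $\beta^3-27=D/c^3$ gives $(\alpha^3-27)(\beta^3-27)=729$, which is precisely the Fermat relation $27\alpha^3+27\beta^3=\alpha^3\beta^3$ used throughout Section 3.

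The main obstacle lies entirely in the first paragraph: establishing $j(\tau)=R(\alpha)$ and $j(3\tau)=R(\beta)$ with the explicit $R$, and with the \emph{same} $R$ in both cases. Once this bridge from $j$ to the $(\alpha,\beta)$ world is secured, everything downstream is a short, mechanical substitution driven by Proposition \ref{prop:4} and the single polynomial identity $(4c^3+1)^3-27c^3=(c^3+1)(8c^3-1)^2$, with no further difficulty.
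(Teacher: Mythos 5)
Your proposal is correct and takes essentially the same route as the paper: both rest on Morton's formulas $j(\tau)=\alpha^3(\alpha^3-24)^3/(\alpha^3-27)$ and $j(3\tau)=\beta^3(\beta^3-24)^3/(\beta^3-27)$ (which the paper cites from \cite[p.~860, (2.7a), (2.7b)]{mc} rather than \cite{ms}), followed by mechanical substitution of the expressions from Proposition \ref{prop:4}. The only cosmetic difference is in (\ref{eqn:3.5}), where you substitute (\ref{eqn:3.3}) directly into the $\alpha$-formula, while the paper first uses the Fermat relation to rewrite $j(\tau)$ as $\beta^3(\beta^3+216)^3/(\beta^3-27)^3$ and then substitutes (\ref{eqn:3.4}); since (\ref{eqn:3.3}) is itself derived from (\ref{eqn:3.4}) and the Fermat relation, this is the same algebra performed in a different order.
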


\begin{proof}
From \cite[p. 860, (2.7b)]{mc}, with $\mathfrak{g}(\tau) = \alpha(\tau)$, we have
\begin{equation*}
j(\tau)=\frac{\alpha^3(\alpha^3-24)^3}{\alpha^3-27}=\frac{\beta^3}{27}\left(\frac{27\beta^3}{\beta^3-27}-24\right)^3 = \frac{\beta^3(\beta^3+216)^3}{(\beta^3-27)^3}.
\end{equation*}

Now using (\ref{eqn:3.4}) to replace $\beta(\tau)$ in terms of $c(\tau)$, we get
\begin{align*}
j(\tau)&=\frac{\beta^3(\beta^3+216)^3}{(\beta^3-27)^3}\\
&=\frac{(4c^3+1)^3}{c^3}\frac{\big((4c^3+1)^3+216c^3\big)^3}{\big((4c^3+1)^3-27c^3\big)^3}\\
&=\frac{(4c^3+1)^3(4c^3+6c+1)^3(16c^6-24c^4+8c^3+36c^2-6c+1)^3}{c^3(c^3+1)^3(8c^3-1)^6}\\
&=\frac{(64c^9+48c^6+228c^3+1)^3(4c^3+1)^3}{c^3(c^3+1)^3(8c^3-1)^6},
\end{align*}
which gives us (\ref{eqn:3.5}).\medskip

From \cite[p. 860, Eq. (2.7a)]{mc}, upon replacing $z$ by $3z$ and using $\mathfrak{f}(3\tau) = \beta(\tau)$, we get
\begin{align*}
j(3\tau)&=\frac{\mathfrak{f}^3(3\tau)\big(\mathfrak{f}^3(3\tau)-24\big)^3}{\mathfrak{f}^3(3\tau)-27}\\
&=\frac{\beta^3(\beta^3-24)^3}{\beta^3-27}\\
&=\frac{(4c^3+1)^3}{c^9}\frac{\big((4c^3+1)^3-24c^3\big)^3}{\big((4c^3+1)^3-27c^3\big)}\\
&=\frac{(64c^9+48c^6-12c^3+1)^3(4c^3+1)^3}{c^9(c^3+1)(8c^3-1)^2}.
\end{align*}
This proves (\ref{eqn:3.6}).
\end{proof}

\noindent {\bf Corollary.} {\it If $\tau \in K$, an imaginary quadratic number field, then $1/c(\tau)$ is an algebraic integer.  Moreover, $2c(\tau)$ is also an algebraic integer.} \smallskip

\begin{proof}
Putting, $x = 1/c(\tau)$ in (\ref{eqn:3.5}) gives
$$j(\tau) = \frac{(x^9 + 228x^6 + 48x^3 + 64)^3 (x^3 + 4)^3}{x^6(x^3 + 1)^3(x^3 - 8)^6}.$$
The first assertion follows from the fact that the degree of the numerator in this expression is $36$, while the degree of the denominator is $33$.  Since $j(\tau)$ is an algebraic integer, $1/c(\tau)$ satisfies a monic equation with algebraic integer coefficients.  On the other hand, setting $c(\tau) = y/2$ yields
$$j(\tau) = \frac{(y^9 + 6y^6 + 228y^3 + 8)^3(y^3 + 2)^3}{y^3(y^3 + 8)^3(y^3 - 1)^6},$$
from which the second assertion follows in the same way.
\end{proof}

This is a simpler argument than the proof given in \cite[Thm. 16]{chkp} for the first assertion of the Corollary.

\subsection{More identities}
\begin{prop} The following identities hold:
\begin{align*}
\alpha\left(\frac{-1}{3\tau}\right)&=\beta(\tau),\\
\beta\left(\frac{-1}{3\tau}\right)&=\alpha(\tau),\\
\alpha^3\left(\frac{-1}{3\tau}\right)&=\frac{27\alpha^3(\tau)}{\alpha^3(\tau)-27},\\
\beta^3\left(\frac{-1}{3\tau}\right)&=\frac{27\beta^3(\tau)}{\beta^3(\tau)-27}.
\end{align*}
\label{prop:6}
\end{prop}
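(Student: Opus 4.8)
The plan is to reduce all four identities to two ingredients that are already available: the functional equation $\eta(-1/\tau)=\sqrt{-i\tau}\,\eta(\tau)$ employed in the proof of Proposition~\ref{prop:2}, and the Fermat-type relation $27\alpha^3+27\beta^3=\alpha^3\beta^3$ quoted from \cite[Theorem 12]{ms}. Only the first of the four identities requires an honest $\eta$-calculation; the remaining three then follow formally.

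First I would prove $\alpha(-1/(3\tau))=\beta(\tau)$ directly from the definition $\alpha(\tau)=3+27\,\eta^3(9\tau)/\eta^3(\tau)$ in (\ref{eqn:3.1}). Writing $\sigma=-1/(3\tau)$, one has $9\sigma=-1/(\tau/3)$ and $\sigma=-1/(3\tau)$, so the transformation formula gives $\eta(9\sigma)=\sqrt{-i\tau/3}\,\eta(\tau/3)$ and $\eta(\sigma)=\sqrt{-3i\tau}\,\eta(3\tau)$. Cubing and forming the quotient, the square-root prefactors combine to $\big((-i\tau/3)/(-3i\tau)\big)^{3/2}=(1/9)^{3/2}=1/27$, which exactly cancels the leading $27$; hence $\alpha(-1/(3\tau))=3+\eta^3(\tau/3)/\eta^3(3\tau)=\beta(\tau)$, using the definition of $\beta$ in (\ref{eqn:3.2}).

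Next I would obtain $\beta(-1/(3\tau))=\alpha(\tau)$ with no further $\eta$-manipulation, by noting that $\tau\mapsto-1/(3\tau)$ is an involution: substituting $-1/(3\tau)$ for $\tau$ in the first identity and simplifying $-1/\big(3\cdot(-1/(3\tau))\big)=\tau$ converts $\alpha(-1/(3\tau))=\beta(\tau)$ into $\alpha(\tau)=\beta(-1/(3\tau))$. The two cubic identities then follow algebraically. Cubing the first identity gives $\alpha^3(-1/(3\tau))=\beta^3(\tau)$, while solving the Fermat relation as $27\alpha^3=\beta^3(\alpha^3-27)$ yields $\beta^3=27\alpha^3/(\alpha^3-27)$, which is the third identity; symmetrically, cubing the second identity gives $\beta^3(-1/(3\tau))=\alpha^3(\tau)$, and solving the Fermat relation as $27\beta^3=\alpha^3(\beta^3-27)$ gives the fourth.

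The only delicate point is the choice of branch for the half-integer powers of $-i\tau$ in the $\eta$-transformation. I do not expect this to be a genuine obstacle: in the quotient arising in the first identity the argument of the square root is the positive real number $1/9$, so the principal branch is unambiguous and no extraneous cube root of unity is introduced. Throughout, $\tau$ ranges over the upper half-plane, where $\sqrt{-i\tau}$ and its companion radicals have positive real part, so the cancellations are valid as stated.
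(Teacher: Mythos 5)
Your proposal is correct and follows essentially the same route as the paper: the first identity via the $\eta$-transformation formula applied to $\eta(9\sigma)/\eta(\sigma)$ with $\sigma=-1/(3\tau)$ (where the prefactors combine to $1/27$), the second by the involution $\tau\mapsto -1/(3\tau)$, and the last two by cubing and solving the Fermat relation $27\alpha^3+27\beta^3=\alpha^3\beta^3$. Your added remark on the branch of the square roots is a point the paper passes over silently, but otherwise the arguments coincide.
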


\begin{proof}
From the definition of $\alpha(\tau)$, we have
$$\alpha(\tau)=3+27\frac{\eta^3(9\tau)}{\eta^3(\tau)}.$$
Upon replacing $\tau$ by $-1/(3\tau)$ in the above, we find that that
\begin{align*}
\alpha\left(\frac{-1}{3\tau}\right)&=3+27\,\frac{\eta^3(-3/\tau)}{\eta^3(-1/3\tau)}\\
&=3+27\,\frac{\big(\sqrt{-i\tau/3}\big)^3\eta^3(\tau/3)}{\big(\sqrt{-3i\tau}\big)^3\eta^3(3\tau)}\\
&=3+27\,\frac{\big(\sqrt{-i\tau}\big)^3\eta^3(\tau/3)}{27\,\big(\sqrt{-i\tau}\big)^3\eta^3(3\tau)}\\
&=3+\frac{\eta^3(\tau/3)}{\eta^3(3\tau)}\\
&=\beta(\tau),
\end{align*}
where we used the transformation formula from \cite[p. 236, Cor. 12.19]{co}: $\eta(-1/\tau)=\sqrt{-i\tau}\,\eta(\tau)$.  The second equation is obtained by simply replacing $\tau$ by $-1/(3\tau)$ in the first one. And the last two are obtained by cubing the first two and using the relation between $\alpha(\tau)$ and $\beta(\tau)$.
\end{proof}

\begin{prop} The following transformation identities hold between the functions $\alpha(\tau)$ and $\beta(\tau)$:
\begin{align*}
\beta\left(\frac{-1}{\tau}\right)&=3\left(\frac{\beta(\tau)+6}{\beta(\tau)-3}\right)=\alpha(\tau/3),\\
\alpha\left(\frac{-1}{\tau}\right)&=3\left(\frac{\alpha(\tau/9)+6}{\alpha(\tau/9)-3}\right)=\beta(\tau/3).
\end{align*}
\label{prop:7}
\end{prop}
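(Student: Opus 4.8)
The plan is to reduce everything to the eta-product definitions \eqref{eqn:3.1}, \eqref{eqn:3.2} together with the single transformation law $\eta(-1/\tau)=\sqrt{-i\tau}\,\eta(\tau)$ already used above, supplemented by Proposition~\ref{prop:6}. I would treat each displayed line as two separate assertions: the rightmost equality, which identifies the transform with a value of the \emph{other} function at $\tau/3$, and the leftmost equality, which is the linear-fractional expression.

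For the rightmost equalities I would simply reindex Proposition~\ref{prop:6}. That proposition gives $\beta(-1/(3\tau))=\alpha(\tau)$ and $\alpha(-1/(3\tau))=\beta(\tau)$; replacing $\tau$ by $\tau/3$ turns $-1/(3\tau)$ into $-1/\tau$ and yields at once $\beta(-1/\tau)=\alpha(\tau/3)$ and $\alpha(-1/\tau)=\beta(\tau/3)$. Alternatively one can recompute these directly: writing $-3/\tau=-1/(\tau/3)$ and $-1/(3\tau)$ in the defining quotients and applying the eta transformation, the automorphy factors combine to $\big((-3i\tau)/(-i\tau/3)\big)^{3/2}=9^{3/2}=27$, which is exactly the constant in the definition of $\alpha$, so $\beta(-1/\tau)=3+27\,\eta^3(3\tau)/\eta^3(\tau/3)=\alpha(\tau/3)$, and symmetrically for $\alpha(-1/\tau)$.

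For the leftmost equalities the key observation is that the two relevant eta-quotients are reciprocal up to the factor $27$. From the definitions, $\alpha(\tau/3)-3=27\,\eta^3(3\tau)/\eta^3(\tau/3)$ while $\beta(\tau)-3=\eta^3(\tau/3)/\eta^3(3\tau)$, so $(\alpha(\tau/3)-3)(\beta(\tau)-3)=27$. Hence
$$\alpha(\tau/3)=3+\frac{27}{\beta(\tau)-3}=3\,\frac{\beta(\tau)+6}{\beta(\tau)-3}.$$
The same device gives the second line: $\beta(\tau/3)-3=\eta^3(\tau/9)/\eta^3(\tau)$ and $\alpha(\tau/9)-3=27\,\eta^3(\tau)/\eta^3(\tau/9)$ multiply to $27$, whence $\beta(\tau/3)=3+27/(\alpha(\tau/9)-3)=3(\alpha(\tau/9)+6)/(\alpha(\tau/9)-3)$.

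There is no serious obstacle here; the only point requiring care is the bookkeeping of the automorphy factors $\sqrt{-i\tau}$ and the attached powers of $\sqrt{3}$ in the direct route, exactly as in the proof of Proposition~\ref{prop:6} --- for $\tau$ in the upper half-plane $-i\tau$ has positive real part, so the principal branch is consistent and the positive scalars $\sqrt{3}$, $1/\sqrt{3}$ cause no sign ambiguity. Invoking Proposition~\ref{prop:6} for the rightmost equalities sidesteps even this, leaving only the elementary reciprocal-quotient identity and the linear-fractional simplification.
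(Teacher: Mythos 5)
Your proposal is correct and follows essentially the same route as the paper: the rightmost equalities come from reindexing Proposition \ref{prop:6} (replacing $\tau$ by $\tau/3$), and the leftmost ones from the observation that $\alpha(\tau/3)-3=27\,\eta^3(3\tau)/\eta^3(\tau/3)$ is $27$ divided by $\beta(\tau)-3$, giving $3+27/(\beta(\tau)-3)=3(\beta(\tau)+6)/(\beta(\tau)-3)$. Your phrasing of this as the product identity $(\alpha(\tau/3)-3)(\beta(\tau)-3)=27$, and your explicit treatment of the second line, are just slightly more spelled-out versions of the paper's argument.
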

Note that the second equality in both the above equations follows from the relations in Proposition \ref{prop:6}, and we begin our proof using those relations.

\begin{proof}
From the definitions of $\alpha(\tau)$ and $\beta(\tau)$, and from Proposition \ref{prop:6},
\begin{align*}
\beta\left(\frac{-1}{\tau}\right)&=\alpha(\tau/3)\\
&=3+27\,\frac{\eta^3(3\tau)}{\eta^3(\tau/3)}\\
&=3+\frac{27}{\beta(\tau)-3}\\
&=3\left(\frac{\beta(\tau)+6}{\beta(\tau)-3}\right).
\end{align*}
A similar calculation starting with $\alpha(-1/\tau)$ gives us the second equation. 
\end{proof}

\begin{prop} The following identities hold for $c(\tau)$ and Dedekind's $\eta$-function $\eta(\tau)$:
\begin{align*}
\frac{1}{c(\tau)}-1-2\,c(\tau)&=\frac{\eta(\tau/3)\,\eta(2\tau/3)}{\eta(3\tau)\,\eta(6\tau)},\\
\frac{1}{c^3(\tau)}-7-8\,c^3(\tau)&=\frac{\eta^4(\tau)\,\eta^4(2\tau)}{\eta^4(3\tau)\,\eta^4(6\tau)}.
\end{align*}
\label{prop:8}
\end{prop}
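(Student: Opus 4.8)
The plan is to prove both identities in Proposition \ref{prop:8} by rewriting each side entirely in terms of the $\eta$-function and then reducing to an already-established relation for $c(\tau)$. The key observation is that the combinations $\frac{1}{c}-1-2c$ and $\frac{1}{c^3}-7-8c^3$ are exactly the quantities appearing (implicitly) in the proofs of Propositions \ref{prop:2} and \ref{prop:3}. Indeed, in the proof of Proposition \ref{prop:2} we found $1-2c = \varphi(-q^{1/3})/\varphi(-q^3)$ (after the appropriate substitution) and $1+1/c = \psi(q^{1/3})/(q^{1/3}\psi(q^3))$; and in Proposition \ref{prop:4} we essentially computed $\beta(\tau) = (4c^3+1)/c = 3 + (1-2c)^2(1+c)/c$. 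So the strategy is to express the left-hand combinations as products of theta-quotients already catalogued, then convert those theta-quotients into $\eta$-quotients.

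\medskip

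First I would factor the left-hand sides algebraically. For the first identity, write
\begin{equation*}
\frac{1}{c}-1-2c = \frac{1-c-2c^2}{c} = \frac{(1-2c)(1+c)}{c},
\end{equation*}
and for the second,
\begin{equation*}
\frac{1}{c^3}-7-8c^3 = \frac{1-7c^3-8c^6}{c^3} = \frac{(1-8c^3)(1+c^3)}{c^3}.
\end{equation*}
Now each factor has a known $\eta$- or $\varphi,\psi$-representation: from the Corollary to Proposition \ref{prop:3} (or directly from the computations feeding Propositions \ref{prop:2}--\ref{prop:4}) we have $1-2c$, $1+c$, $1-8c^3$, and $1+c^3$ expressible via $\varphi$ and $\psi$ evaluated at powers of $q$, together with the defining product $c = q^{1/3}\chi(-q)/\chi^3(-q^3)$ for the $c$ in the denominator. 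The second identity is likely the cleaner of the two, since $1-8c^3 = \varphi^4(-q)/\varphi^4(-q^3)$ and $1+1/c^3 = \psi^4(q)/(q\,\psi^4(q^3))$ are recorded verbatim in the proof of Proposition \ref{prop:3}, so I would multiply these and simplify $\varphi,\psi$ into $\eta$ using $\varphi(-q)=\eta^2(\tau)/\eta(2\tau)$ and $\psi(q)=\eta^2(2\tau)/\eta(\tau)$.

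\medskip

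The main obstacle will be the first identity, because it involves the \emph{fractional} arguments $\tau/3$ and $2\tau/3$ on the right, which correspond to $q^{1/3}$ appearing in $\varphi(-q^{1/3})$ and $\psi(q^{1/3})$. The natural route is to recognize $(1-2c) = \varphi(-q^{1/3})/\varphi(-q^3)$ and $(1+1/c) = \psi(q^{1/3})/(q^{1/3}\psi(q^3))$, so that their product is
\begin{equation*}
(1-2c)\left(1+\frac{1}{c}\right) = \frac{\varphi(-q^{1/3})\,\psi(q^{1/3})}{q^{1/3}\,\varphi(-q^3)\,\psi(q^3)},
\end{equation*}
and then apply the standard factorization $\varphi(-q)\psi(q) = f^2(-q^2)/\,?$ — more precisely I would use $\varphi(-q)\psi(q)=\psi(q)\varphi(-q)$ together with the product identities to collapse $\varphi(-q^{1/3})\psi(q^{1/3})$ into a single $\eta$-quotient in $q^{1/3}, q^{2/3}$. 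Translating $q^{1/3}$-products into $\eta(\tau/3), \eta(2\tau/3)$ and tracking the fractional powers of $q$ carefully is where errors creep in; the bookkeeping of the overall power of $q$ must reproduce exactly the $\eta(\tau/3)\eta(2\tau/3)/(\eta(3\tau)\eta(6\tau))$ on the right. Once both sides are written as $\eta$-quotients, the identity reduces to matching exponents of each $\eta(k\tau/3)$, a purely formal check. I expect the cubed (second) identity to fall out quickly from Proposition \ref{prop:3}'s intermediate results, while the first requires the extra step of handling the cube-root arguments via the Ramanujan $\varphi,\psi$ dissection formulas from \cite[p. 345, Entry 1]{b3}.
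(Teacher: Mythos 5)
Your proposal is correct and takes essentially the same route as the paper: both proofs multiply the Berndt Entry 1 representations $(1-2c)\left(1+\frac{1}{c}\right)$ and $(1-8c^3)\left(1+\frac{1}{c^3}\right)$ (your factorizations $\frac{(1-2c)(1+c)}{c}$ and $\frac{(1-8c^3)(1+c^3)}{c^3}$ are the same thing) and then convert the resulting $\varphi,\psi$-quotients into $\eta$-quotients. The only cosmetic difference is that the paper expands $\varphi$ and $\psi$ into $q$-products and regroups via $(q;q^2)_\infty(q^2;q^2)_\infty=(q;q)_\infty$, whereas you invoke the conversions $\varphi(-q)=\eta^2(\tau)/\eta(2\tau)$ and $\psi(q)=q^{-1/8}\,\eta^2(2\tau)/\eta(\tau)$ directly — note the factor $q^{1/8}$ missing from your stated $\psi$-formula, which is exactly the fractional-power bookkeeping you flag and which does cancel to give the clean right-hand sides.
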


\begin{proof}
From \cite[p. 345, Entry 1 (i), (ii)]{b3} and from the proofs of \cite[p. 346-347, Entry 1 (i), (iii)]{b3}, we have:
\begin{align*}
1-2c&=\frac{\varphi(-q^{1/3})}{\varphi(-q^3)}, \ 1+\frac{1}{c}=\frac{\psi(q^{1/3})}{q^{1/3}\,\psi(q^3)},\\
1-8c^3&=\frac{\varphi^4(-q)}{\varphi^4(-q^3)},\ \ 1+\frac{1}{c^3}=\frac{\psi^4(q)}{q\,\psi^4(q^3)}.
\end{align*}
Multiplying the first pair of relations, we get
\begin{align*}
\frac{1}{c(\tau)}-1-2\,c(\tau)&=\frac{1}{q^{1/3}}\cdot\frac{\varphi(-q^{1/3})}{\varphi(-q^3)}\cdot\frac{\psi(q^{1/3})}{\psi(q^3)}\\
&=\frac{1}{q^{1/3}}\cdot\frac{(q^{1/3};q^{2/3})_\infty^2(q^{2/3};q^{2/3})_\infty}{(q^3;q^6)_\infty^2(q^6;q^6)_\infty}\cdot\frac{(q^{2/3};q^{2/3})_\infty(q^3;q^6)_\infty}{(q^{1/3};q^{2/3})_\infty(q^6;q^6)_\infty}\\
&=\frac{1}{q^{1/3}}\cdot\frac{(q^{1/3};q^{2/3})_\infty(q^{2/3};q^{2/3})_\infty^2}{(q^3;q^6)_\infty(q^6;q^6)_\infty^2}\\
&=\frac{1}{q^{1/3}}\cdot\frac{(q^{1/3};q^{1/3})_\infty(q^{2/3};q^{2/3})_\infty}{(q^3;q^3)_\infty(q^6;q^6)_\infty}\\
&=\frac{\eta(\tau/3)\,\eta(2\tau/3)}{\eta(3\tau)\,\eta(6\tau)}.
\end{align*}
In a similar way, multiplying the second pair of relations gives us
\begin{align*}
\frac{1}{c^3(\tau)}-7-8\,c^3(\tau)&=\frac{1}{q}\cdot\frac{\varphi^4(-q)}{\varphi^4(-q^3)}\cdot\frac{\psi^4(q)}{\psi^4(q^3)}\\
&=\frac{1}{q}\cdot\frac{(q;q^2)_\infty^8(q^2;q^2)_\infty^4}{(q^3;q^6)_\infty^8(q^6;q^6)_\infty^4}\cdot\frac{(q^2;q^2)_\infty^4(q^3;q^6)_\infty^4}{(q;q^2)_\infty^4(q^6;q^6)_\infty^4}\\
&=\frac{1}{q}\cdot\frac{(q;q^2)_\infty^4(q^2;q^2)_\infty^8}{(q^3;q^6)_\infty^4(q^6;q^6)_\infty^8}\\
&=\frac{1}{q}\cdot\frac{(q;q)_\infty^4(q^2;q^2)_\infty^4}{(q^3;q^3)_\infty^4(q^6;q^6)_\infty^4}\\
&=\frac{\eta^4(\tau)\,\eta^4(2\tau)}{\eta^4(3\tau)\,\eta^4(6\tau)}.
\end{align*}
\end{proof}

See \cite{hcc} for similar proofs of the identities in Proposition \ref{prop:8}, where they are expressed in a slightly different form.  Note that these identities are analogues of similar identities for the Rogers-Ramanujan continued fraction; see \cite[Eqs. (7.2), (7.7)]{d}.

\begin{prop} The following identity holds between $\beta(-1/\tau)$ and $c=c(\tau)$:
$$\beta\left(\frac{-1}{\tau}\right)=\frac{3\,(4c^3+6c+1)}{(2c-1)^2(c+1)}=3+\frac{27c}{(2c-1)^2(c+1)}.$$
\label{prop:9}
\end{prop}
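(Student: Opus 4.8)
The plan is to combine the first equality of Proposition~\ref{prop:7}, namely $\beta(-1/\tau) = 3\,(\beta(\tau)+6)/(\beta(\tau)-3)$, with the evaluation $\beta(\tau) = (4c^3+1)/c$ from Proposition~\ref{prop:4}. Substituting the latter into the former and clearing the common factor of $c$ from numerator and denominator should collapse everything into a rational function of $c$ alone, which is exactly the shape of the asserted identity.

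Concretely, I would first substitute to obtain
$$\beta\left(\frac{-1}{\tau}\right) = 3\cdot\frac{\tfrac{4c^3+1}{c}+6}{\tfrac{4c^3+1}{c}-3} = 3\cdot\frac{4c^3+6c+1}{4c^3-3c+1}.$$
The key algebraic step is then the factorization $4c^3-3c+1 = (2c-1)^2(c+1)$, which one verifies by expanding $(4c^2-4c+1)(c+1)$; this same cubic in fact already appears factored in exactly this way inside the proof of Proposition~\ref{prop:4} (in the denominator of $\alpha^3-27$, as part of $(4c^3+1)^3-27c^3$). Substituting this factorization into the denominator yields the first claimed form $3(4c^3+6c+1)/\bigl((2c-1)^2(c+1)\bigr)$.

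For the second equality I would reconcile the two expressions by placing $3$ over the common denominator. Since $3(2c-1)^2(c+1) = 3(4c^3-3c+1) = 12c^3-9c+3$, adding $27c$ gives $12c^3+18c+3 = 3(4c^3+6c+1)$, which confirms that
$$3+\frac{27c}{(2c-1)^2(c+1)} = \frac{3(4c^3+6c+1)}{(2c-1)^2(c+1)},$$
matching the first form. This completes the chain of equalities.

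There is essentially no serious obstacle here: the entire argument is an elementary substitution into Proposition~\ref{prop:7} followed by a single polynomial factorization, with both ingredients already established in earlier propositions. The only point requiring care is ensuring the factorization of the denominator cubic is correct, which I would double-check by direct expansion, and confirming that no sign or factor of $c$ is lost when simplifying the compound fraction.
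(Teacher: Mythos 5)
Your proposal is correct and follows essentially the same route as the paper: substitute $\beta(\tau)=(4c^3+1)/c$ from (\ref{eqn:3.4}) into the transformation formula $\beta(-1/\tau)=3(\beta(\tau)+6)/(\beta(\tau)-3)$ of Proposition \ref{prop:7}, then factor $4c^3-3c+1=(2c-1)^2(c+1)$. The only difference is that you spell out the verification of the final equality $3+27c/((2c-1)^2(c+1))$ explicitly, which the paper leaves as an implicit simplification.
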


\begin{proof}
Using (\ref{eqn:3.4}) and Proposition \ref{prop:7}, we see that
\begin{align*}
\beta\left(\frac{-1}{\tau}\right)&=3\left(\frac{\beta(\tau)+6}{\beta(\tau)-3}\right)=3\left(\frac{\frac{4c^3+1}{c}+6}{\frac{4c^3+1}{c}-3}\right)\\
&=\frac{3\,(4c^3+6c+1)}{(4c^3-3c+1)}=\frac{3\,(4c^3+6c+1)}{(2c-1)^2(c+1)}\\
&=3+\frac{27c}{(2c-1)^2(c+1)}.
\end{align*}
\end{proof}

\noindent {\it Alternate proof of Proposition 1.}\medskip

Here we give the argument that Proposition \ref{prop:1} can also be derived from Propositions \ref{prop:2}, \ref{prop:4} and \ref{prop:9}.  From (\ref{eqn:3.4}), we have
$$\beta\left(\frac{-1}{\tau}\right)=\frac{4c^3\left(\frac{-1}{\tau}\right)+1}{c\left(\frac{-1}{\tau}\right)}.$$
Let $x=c(\tau/2)$ and $y=c(\tau)$.  Using the result of Proposition \ref{prop:9} for the left-hand side, and that of Proposition \ref{prop:2} for the right-hand side, we obtain
$$\frac{3\,(4y^3+6y+1)}{(2y-1)^2(y+1)}=\frac{4\left(\frac{1/2-x}{1+x}\right)^3+1}{\left(\frac{1/2-x}{1+x}\right)}.$$
This simplifies to
$$\frac{3\,(4y^3+6y+1)}{(2y-1)^2(y+1)}=\frac{3\,(2x^3-6x^2-1)}{(2x-1)\,(x+1)^2}.$$
Moving everything to the left hand side and factoring the numerator, we see that
$$\frac{27\,(2xy+1)\,(x^2+2xy^2-y)}{(2x-1)\,(x+1)^2(2y-1)^2(y+1)}=0.$$
Now from the definitions, it is clear that $y=O(q^{1/3})$ and $x=O(q^{1/6})$ as $q$ tends to $0$. So it is the last factor that vanishes  for $q$ sufficiently small, and hence vanishes for $|q|<1$ by the identity theorem. Therefore,
$$x^2+2xy^2-y=0.$$
Now replacing $\tau$ by $2\tau$ gives the result. \qed \bigskip

We now describe the series of calculations and numerical approximations that led to the result in the next Proposition \ref{prop:10}. Note that from Proposition \ref{prop:5}, we have the following relationship between $j(\tau)$ and $c=c(\tau)$:
$$j(\tau)=\frac{(64c^9+48c^6+228c^3+1)^3(4c^3+1)^3}{c^3(c^3+1)^3(8c^3-1)^6}.$$
Now, replacing $c^3$ by $x$, let
$$j_3(x)=\frac{(64x^3+48x^2+228x+1)^3(4x+1)^3}{x(x+1)^3(8x-1)^6}.$$
Then
$$j(\tau)=j_3(c^3(\tau)).$$
Hence,
$$j\left(\frac{-1}{6\tau}\right)=j_3\left(c^3\left(\frac{-1}{6\tau}\right)\right)=j_3(t(c^3(\tau))),$$
by the identity $c^3(-1/6\tau) = t(c^3(\tau))$, where $t(x)$ is given by Proposition \ref{prop:3} as $t(x)=\frac{1/8-x}{1+x}$.\bigskip

\noindent A calculation on Maple shows that
$$j_{33}(x)=j_3(t(x))=-\frac{(2x-1)^3(8x^3+12x^2+6x-1)^3}{x^6(8x-1)(x+1)^2}.$$
Therefore,
\begin{equation}
j\left(\frac{-1}{6\tau}\right)=j_{33}(c^3(\tau)).
\label{eqn:3.7}
\end{equation}

From \cite[p. 860, (2.7b)]{mc} the function
\begin{equation*}
\alpha(\tau)=3+27\,\frac{\eta^3(9\tau)}{\eta^3(\tau)}
\end{equation*}
satisfies the following relation:
\begin{equation}
j(\tau)=\frac{\alpha^3(\alpha^3-24)^3}{\alpha^3-27}=G(\alpha^3(\tau))
\label{eqn:3.8},
\end{equation}
where
$$G(x)=\frac{x(x-24)^3}{x-27}.$$
If we make the substitution $x\rightarrow(x^{-2/3}-2x^{1/3})^3=(12-8x-6/x+1/x^2)$, we find that
$$G\big((x^{-2/3}-2x^{1/3})^3\big)=-\frac{(2x-1)^3(8x^3+12x^2+6x-1)^3}{x^6(8x-1)(x+1)^2}=j_{33}(x).$$
It follows from $(\ref{eqn:3.7})$ and $(\ref{eqn:3.8})$ that
\begin{align*}
G\left(\left(\frac{1}{c^2(\tau)}-2c(\tau)\right)^3\right)&=G\big(12-8c^3(\tau)-6c^{-3}(\tau)+c^{-6}(\tau)\big)\\
&=j\left(\frac{-1}{6\tau}\right)=G\left(\alpha^3\left(\frac{-1}{6\tau}\right)\right).
\end{align*}

\noindent This raises the question: what is the relationship between $\alpha(-1/6\tau) = \beta(2\tau)$ and $\displaystyle\frac{1}{c^2(\tau)}-2c(\tau)$?  The answer is given in the next proposition.

\begin{prop} The following relation holds between $\beta(2\tau)$ and $c(\tau)$:
$$\beta(2\tau)=\frac{1}{c^2(\tau)}-2\,c(\tau).$$
\label{prop:10}
\end{prop}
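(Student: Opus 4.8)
The plan is to bypass the $G$-function machinery assembled just above the statement---which, since $G$ has degree four and is therefore many-to-one, would only deliver the equality of the cubes $\beta^3(2\tau)$ and $\big(c^{-2}(\tau)-2c(\tau)\big)^3$ and would then force a separate argument to select the correct cube root---and instead to derive the identity directly from Proposition \ref{prop:4} together with the modular relation of Proposition \ref{prop:1}. The idea is that (\ref{eqn:3.4}), read at the argument $2\tau$, already expresses $\beta(2\tau)$ rationally in $c(2\tau)$, while Proposition \ref{prop:1} lets us trade $c(2\tau)$ for $c(\tau)$.

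Concretely, set $u=c(\tau)$ and $v=c(2\tau)$. First I would apply (\ref{eqn:3.4}) with $\tau$ replaced by $2\tau$ to write
$$\beta(2\tau)=\frac{4c^3(2\tau)+1}{c(2\tau)}=\frac{4v^3+1}{v}.$$
The assertion $\beta(2\tau)=c^{-2}(\tau)-2c(\tau)$ then becomes the purely algebraic claim $\frac{4v^3+1}{v}=\frac{1-2u^3}{u^2}$, which after clearing denominators reads
$$(4v^3+1)u^2-(1-2u^3)v=4u^2v^3+2u^3v+(u^2-v)=0.$$

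The key---and essentially the only---step is then to invoke Proposition \ref{prop:1}, which gives $u^2+2uv^2-v=0$, equivalently $u^2-v=-2uv^2$. Substituting this for the grouped term yields
$$4u^2v^3+2u^3v-2uv^2=2uv\,(u^2+2uv^2-v)=0,$$
so the cross-multiplied relation is an identity; dividing by $u^2v$ (which is not identically zero) gives the proposition. I expect the only genuine subtlety to be recognizing that the cleared relation factors through exactly Proposition \ref{prop:1}; once that factorization is spotted the verification is immediate. As a consistency check one can expand both sides in powers of $q$ from the product forms of $\beta$ and $c$, which simultaneously confirms that $c^{-2}(\tau)-2c(\tau)$---rather than another branch---is the correct cube root of the quantity appearing in the $G$-identity (\ref{eqn:3.7})--(\ref{eqn:3.8}).
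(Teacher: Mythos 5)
Your proof is correct and is essentially the paper's own argument: both rest on exactly the same two ingredients, namely \eqref{eqn:3.4} applied at $2\tau$ and the relation $u^2+2uv^2-v=0$ of Proposition \ref{prop:1}, the only difference being direction---the paper multiplies Proposition \ref{prop:1}'s relation by $2xy$ and adds to derive the identity forward, while you clear denominators in the target identity and factor the result as $2uv$ times Proposition \ref{prop:1}'s relation. Note also that the $G$-function machinery you set out to bypass is only motivation preceding the statement; the paper's proof of this proposition never invokes it.
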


\begin{proof}
We know the relation between $x=c(\tau)$ and $y=c(2\tau)$ from Proposition \ref{prop:1} as
$$x^2+2xy^2=y.$$
Multiply both sides by $2xy$ to get 
$$2x^3y+4x^2y^3=2xy^2.$$
We add the above two equations to obtain
$$2x^3y+4x^2y^3+x^2=y.$$
Now separate the variables in the following way:
\begin{align*}
4x^2y^3+x^2&=y-2x^3y\\
x^2(4y^3+1)&=y(1-2x^3)\\
\frac{4y^3+1}{y}&=\frac{1-2x^3}{x^2},
\end{align*}
which, upon using (\ref{eqn:3.4}) for the left hand side, gives us the result.
\end{proof}

\begin{prop}
The functions $x=\beta(\tau)$ and $y = \beta(2\tau)$ satisfy the relation
\begin{equation*}
h(x,y) = x^3+y^3 -x^2y^2+9xy-54 = 0,
\end{equation*}
as do the functions $x=\alpha(\tau)$ and $y=\alpha(2\tau)$.
\label{prop:10a}
\end{prop}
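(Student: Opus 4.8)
The plan is to reduce everything to Ramanujan's cubic continued fraction $c = c(\tau)$, verify the relation for $\beta$ as a polynomial identity in $c^3$, and then deduce the $\alpha$-case from the $\beta$-case using the transformation law of Proposition \ref{prop:6} together with the symmetry of $h$.

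First I would parametrize the two $\beta$-values by $c$. By (\ref{eqn:3.4}) we have $\beta(\tau) = (4c^3+1)/c$, and by Proposition \ref{prop:10} we have $\beta(2\tau) = 1/c^2 - 2c = (1-2c^3)/c^2$. Writing $x = \beta(\tau)$ and $y = \beta(2\tau)$ and substituting these into $h(x,y) = x^3 + y^3 - x^2y^2 + 9xy - 54$, every term becomes a rational function whose denominator divides $c^6$. Clearing denominators, it then suffices to prove the numerator identity
$$c^3(4c^3+1)^3 + (1-2c^3)^3 - (4c^3+1)^2(1-2c^3)^2 + 9c^3(4c^3+1)(1-2c^3) - 54c^6 = 0.$$
Setting $t = c^3$, this reduces to the single-variable polynomial identity
$$t(4t+1)^3 + (1-2t)^3 - (4t+1)^2(1-2t)^2 + 9t(4t+1)(1-2t) - 54t^2 = 0,$$
whose left side expands to a polynomial of degree $4$ in $t$ in which the coefficients of $t^4,t^3,t^2,t^1,t^0$ all cancel. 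This is a finite, routine expansion, giving $h(\beta(\tau),\beta(2\tau)) = 0$ identically.

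For the $\alpha$-values I would avoid re-parametrizing directly (note that (\ref{eqn:3.3}) determines only $\alpha^3$, not $\alpha$) and instead use Proposition \ref{prop:6}. From $\alpha(-1/(3\tau)) = \beta(\tau)$ we obtain $\alpha(\tau) = \beta(-1/(3\tau))$, and hence $\alpha(2\tau) = \beta(-1/(6\tau)) = \beta\big((-1/(3\tau))/2\big)$. Putting $\sigma = -1/(3\tau)$, so that $\alpha(\tau) = \beta(\sigma)$ and $\alpha(2\tau) = \beta(\sigma/2)$, the symmetry of $h(x,y)$ in $x$ and $y$ gives
$$h(\alpha(\tau),\alpha(2\tau)) = h(\beta(\sigma),\beta(\sigma/2)) = h(\beta(\sigma/2),\beta(\sigma)).$$
Applying the already-proved $\beta$-relation with $\tau$ replaced by $\sigma/2$ yields $h(\beta(\sigma/2),\beta(\sigma)) = 0$, whence $h(\alpha(\tau),\alpha(2\tau)) = 0$ as well.

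The computational heart is the single-variable expansion in $t = c^3$, which is routine but must be carried out carefully so that every coefficient is seen to cancel. The one point genuinely requiring attention is the $\alpha$-case bookkeeping: one must correctly track that $\alpha(2\tau) = \beta(\sigma/2)$ with $\sigma = -1/(3\tau)$ and then invoke the symmetry of $h$, so that the $\beta$-identity transfers directly and a second (and messier, since $\alpha$ is only cube-root-determined by $c$) computation is avoided. Because all functions involved are meromorphic, the identities established on an open subset of the domain extend throughout by the identity theorem.
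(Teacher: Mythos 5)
Your proof is correct, and while your treatment of the $\beta$-case coincides with the paper's (the paper cites the same two parametrizations $\beta(\tau)=(4c^3+1)/c$ and $\beta(2\tau)=(1-2c^3)/c^2$, phrased as the genus-zero parametrization $x=(t^3+2)/t$, $y=(-t^3+4)/t^2$ of $h=0$ with $t=2c(\tau)$; your degree-$4$ identity in $t=c^3$ does check out, with all five coefficients cancelling), your handling of the $\alpha$-case is genuinely different and cleaner. The paper deduces the $\alpha$-relation by first verifying the covariance identity
\begin{equation*}
(x-3)^3(y-3)^3\, h\!\left(\frac{3(x+6)}{x-3},\frac{3(y+6)}{y-3}\right) = 3^9\, h(x,y),
\end{equation*}
then substituting $x=\beta(\tau)$, $y=\beta(2\tau)$ and invoking Proposition \ref{prop:7} to recognize the transformed arguments as $\alpha(\tau/3)$, $\alpha(2\tau/3)$; this costs a second nontrivial polynomial verification. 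You instead use Proposition \ref{prop:6} to write $\alpha(\tau)=\beta(\sigma)$ and $\alpha(2\tau)=\beta(\sigma/2)$ with $\sigma=-1/(3\tau)$, and observe that the pair $(\alpha(\tau),\alpha(2\tau))$ is just a $\beta$-pair in reversed order, so the symmetry of $h$ in $x$ and $y$ transfers the already-proved relation at once --- no further computation needed. What the paper's route buys in exchange is the explicit covariance formula itself, which records how $h$ behaves under the fractional linear map $x\mapsto 3(x+6)/(x-3)$, a fact of independent interest given the paper's later use of such transformation identities (e.g.\ (\ref{eqn:4.7}) and (\ref{eqn:4.8})); your route buys economy. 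Your side remark that one should not parametrize $\alpha$ directly (since (\ref{eqn:3.3}) pins down only $\alpha^3$) is a correct and relevant observation.
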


\begin{proof}
Propositions \ref{prop:4} and \ref{prop:10} give that
\begin{align*}
\beta(\tau) &= \frac{(2c)^3+2}{2c},\\
\beta(2\tau) & = \frac{-(2c)^3+4}{(2c)^2}, \ \ c = c(\tau).
\end{align*}
On the other hand, the curve $h(x,y) = 0$ has genus $0$ and the parametrization
$$x = \frac{t^3+2}{t}, \ y = \frac{-t^3+4}{t^2}.$$
Setting $t = 2c(\tau)$ proves the first assertion.  For the second, we note that
$$(x-3)^3(y-3)^3 h\left(\frac{3(x+6)}{x-3},\frac{3(y+6)}{y-3}\right) = 3^9h(x,y).$$
Putting $x = \beta(\tau), y=\beta(2\tau)$ and using Proposition \ref{prop:7} yields that
$$(\beta(\tau)-3)^3 (\beta(2\tau)-3)^3 h(\alpha(\tau/3),\alpha(2\tau/3)) = 3^9 h(\beta(\tau),\beta(2\tau)) = 0.$$
Hence, $h(\alpha(\tau/3),\alpha(2\tau/3)) = 0$, and replacing $\tau$ by $3\tau$ yields the assertion.
\end{proof}

\section{Periodic points}

In this section we will prove Theorems \ref{thm:1}-\ref{thm:3} of the Introduction.

\subsection{Modular function invariance}

We first express the cubic continued fraction $c(\tau)$ in terms of the theta constants.
\begin{align*}
c(\tau)&=q^{1/3}\prod_{n=1}^\infty\frac{(1-q^{6n-1})(1-q^{6n-5})}{(1-q^{6n-3})^2}\\
&=q^{1/3}\prod_{n=1}^\infty\frac{(1-q^{6n})(1-q^{6n-1})(1-q^{6n-5})}{(1-q^{6n})(1-q^{6n-3})(1-q^{6n-3})}\\
&=q^{1/3}\prod_{n=1}^\infty\frac{[1-(q^6)^n][1-(q^6)^{n-\frac{1}{6}}][1-(q^6)^{n-\frac{5}{6}}]}{[1-(q^6)^n][1-(q^6)^{n-\frac{3}{6}}][1-(q^6)^{n-\frac{3}{6}}]}\\
&=q^{1/3}\prod_{n=1}^\infty\frac{[1-(q^6)^n][1-(q^6)^{n-\frac{1+2/3}{2}}][1-(q^6)^{n-\frac{1-2/3}{2}}]}{[1-(q^6)^n][1-(q^6)^{n-\frac{1+0}{2}}][1-(q^6)^{n-\frac{1-0}{2}}]}.
\end{align*}
The above can be re-written as
\begin{equation*}
c(\tau)=e^{-2\pi i/6}\cdot\frac{e^{2\pi i(\frac{2/3\cdot1}{4})}(q^6)^{\frac{(2/3)^2}{8}}}{e^{2\pi i(\frac{0\cdot1}{4})}(q^6)^{\frac{0^2}{8}}}\cdot\prod_{n=1}^\infty\frac{[1-(q^6)^n][1-(q^6)^{n-\frac{1+2/3}{2}}][1-(q^6)^{n-\frac{1-2/3}{2}}]}{[1-(q^6)^n][1-(q^6)^{n-\frac{1+0}{2}}][1-(q^6)^{n-\frac{1-0}{2}}]},
\end{equation*}
which, upon using the product formula for theta constants in \cite[p. 143, (4.8)]{d}, gives us that
\begin{equation}
c(\tau)=e^{-2\pi i/6}\cdot\frac{\theta[\genfrac{}{}{0pt}{1}{2/3}{1}](6\tau)}{\theta[\genfrac{}{}{0pt}{1}{0}{1}](6\tau)}.
\label{eqn:4.1}
\end{equation}

Let $\gamma=\bigl(\begin{smallmatrix}a&b\\c&d\end{smallmatrix}\bigr)\in\text{SL}_2(\mathbb{Z})$ so that $\gamma\tau=\frac{a\tau+b}{c\tau+d}$. Then, from Lemma A.1 in \cite[p. 158, Eq. (A.2)]{d}, it follows that if $\gamma \in \Gamma_1(N)$, then
\begin{align*}
\theta\left[\genfrac{}{}{0pt}{0}{\epsilon}{\epsilon'}\right](N\gamma\tau)&=\theta\left[\genfrac{}{}{0pt}{0}{\epsilon}{\epsilon'}\right]\textstyle\big(N(\frac{a\tau+b}{c\tau+d})\big)\\
&=\lambda\sqrt{c\tau-d}\cdot e^{2\pi i\big(\frac{-\epsilon^2Nb(a-2)+2\epsilon\epsilon'(bc+d-1)}{8}\big)}\cdot\theta\left[\genfrac{}{}{0pt}{0}{\epsilon}{\epsilon'}\right](N\tau),
\end{align*}
if $N$ and $N\epsilon$ are both even, where $\lambda$ is some eighth root of unity that does not depend on $\epsilon$ or $\tau$.  Hence,
\begin{align*}
c(\gamma\tau)&=e^{-2\pi i/6}\cdot\frac{\theta[\genfrac{}{}{0pt}{1}{2/3}{1}](6\gamma\tau)}{\theta[\genfrac{}{}{0pt}{1}{0}{1}](6\gamma\tau)}\\
&=e^{-2\pi i/6}\cdot\frac{\lambda\sqrt{c\tau-d}\cdot e^{2\pi i\big(\frac{-\frac{4}{9}\cdot6b(a-2)+2\cdot\frac{2}{3}\cdot1(bc+d-1)}{8}\big)}\theta[\genfrac{}{}{0pt}{1}{2/3}{1}](6\tau)}{\lambda\sqrt{c\tau-d}\cdot e^{2\pi i\big(\frac{0+0}{8}\big)}\theta\left[\genfrac{}{}{0pt}{1}{0}{1}\right](6\tau)}\\
&=e^{-2\pi i/6}\cdot\frac{e^{2\pi i(-\frac{b(a-2)}{3}+\frac{bc+d-1}{6})}}{1}\cdot\frac{\theta[\genfrac{}{}{0pt}{1}{2/3}{1}](6\tau)}{\theta[\genfrac{}{}{0pt}{1}{0}{1}](6\tau)}.
\end{align*}
Therefore,
\begin{equation*}
c(\gamma\tau)=e^{2\pi i\big(-\frac{b(a-2)}{3}+\frac{bc+d-1}{6}\big)}\cdot c(\tau),
\end{equation*}
from which we conclude that $c(\tau)$ is invariant under $\Gamma(6)$, as well as that $c^3(\tau)$ is invariant under $\Gamma_1(6)$.

\subsection{Minimal polynomials of periodic points}
As in \cite{mc}, we let $p_d(x)$ denote the minimal polynomial of $\beta(w/3)$, where $w$ satisfies the conditions of Theorem 1.1. ($\beta(w/3)$ is the same as $\alpha=\mathfrak{f}(w)$ from \cite[Theorem 1]{mc}.)  Then $c(w/3)$ is a root of the polynomial $C_d(x)=x^{2h(-d)}p_d\big(\frac{x^3+2}{x}\big)$, by equation (\ref{eqn:3.4}).  We will see that the roots of these factors are periodic points of the function $\mathfrak{f}(z)$ mentioned in the Introduction.  To find the minimal polynomials of these periodic points, we proceed as follows.  \medskip

We know that the pair $(x,y) = (c(\tau), c(3\tau))$ satisfies the equation $g(x,y) = 0$, where
$$g(x,y) = x^3(4y^2 + 2y + 1) - y(y^2 - y + 1).$$
Then $8g(x/2, y/2) = f(x,y)$, where
$$f(x,y) = x^3 (y^2 + y + 1) - y(y^2 - 2y + 4).$$
We have
$$f(2c(\tau),2c(3\tau)) = 0.$$
Proceeding as in \cite{am}, we compute the minimal polynomials of periodic points using iterated resultants involving the polynomial $f(x,y)$.  We set
$$R^{(1)}(x,x_1) = f(x, x_1) = x^3(x_1^2+ x_1+1)-x_1(x_1^2-2x_1+4)$$
and define, inductively,
$$R^{(n)}(x,x_n) = \textrm{Res}_{x_{n-1}}(R^{(n-1)}(x,x_{n-1}),f(x_{n-1},x_n)) \ \ n \ge 2.$$
Then the roots of the polynomial
$$R_n(x) = R^{(n)}(x,x), \ \ n \ge 1,$$
are the periodic points whose minimal periods divide $n$. Also, let
$$P_n(x) = \prod_{d \mid n}{R_d(x)^{\mu(n/d)}}.$$
It will be shown that this equation for $P_n(x)$ describes a polynomial by proving that $R_d(x)$ divides $R_n(x)$ if $d$ divides $n$ and that $R_n(x)$ has distinct roots.  See Proposition \ref{prop:14} in Section 5. \medskip

Table \ref{tab:2} lists the irreducible factors $q(x)$ of $P_n(x)$ for $1\le n\le4$ along with their discriminants, and the integer $d$ for which $q(x)$ divides $C_d(x)=x^{2h(-d)}p_d\big(\frac{x^3+2}{x}\big)$ in the right-most column.\medskip

\begin{table}
\caption{Polynomials dividing $P_n(x)$ for the identity between $2c(\tau)$ and $2c(3\tau)$}
\bigskip
\begin{center}
\begin{tabular}{|c|c|c|c|}
\hline
$n$ & Factors of $P_n(x)$ & discriminant & $d$\\
\hline
$1$ & $x, x - 1, x + 2$ & $1$ & \\
$1$ & $x^2 + 2$ & $-2^3$ & 8\\
$2$ & $x^4 - 4x^3 + 4x^2 + 8$ & $2^{14}3^2$ & 32\\
$2$ &  $x^4 - 2x^3 + 2x^2 + 4x + 4$ & $2^8 3^2 5^2$ &20\\
$2$ &  $x^4 + 2x^2 + 4x + 2$ & $2^8 3^2$ & 8\\
$3$ &  $x^6 + 2x^4 + 4x^3 + 12x^2 + 4x + 4$ & $-2^{12} 3^6 11^3$ & 11\\
$3$ &  $x^6 + 5x^4 - 3x^3 + 12x^2 + 4x + 8$  & $-2^6 3^6 5^2 23^3$ & 23\\
$3$ & $x^6 - 10x^5 + 44x^4 - 56x^3 + 16x^2 - 32x + 64$ &  $-2^{30} 3^6 23^3$ & 92(6)\\
$3$  & $x^6 - x^5 + 6x^4 + 3x^3 + 10x^2 + 8$ & $-2^6 3^6 5^2 23^3$ & 23\\
$3$  &  $x^6 - 2x^5 + 12x^4 - 8x^3 + 8x^2 + 16$ & $-2^{22} 3^6 11^3$ & 44(6)\\
$3$ &  $x^6 + x^5 + x^4 + 7x^3 + 11x^2 + 5x + 1$ & $-3^6 23^3$ & 23\\
$3$ & $x^{12} - 12x^{11} + 44x^{10} - 16x^9 - 88x^8 + 176x^7+ 48x^6$ & $2^{98} 3^{30} 13^6$ & 104(6)\\
&  $\ \ \ - 352x^5 - 352x^4 + 128x^3 + 704x^2 + 384x + 64$ & $-$ & \\
$4$ & $x^8 + 4x^7 + 8x^6 + 52x^4 + 32x^2 - 32x + 16$ & $2^{44} 3^{12} 7^4$ & 56\\
$4$  & $x^8 + 4x^7 + 4x^6 + 8x^5 + 34x^4 + 24x^3 + 12x^2 - 8x + 2$ & $2^{22} 3^{12} 5^4 23^2$ & 32\\
$4$  & $x^8 + 8x^7 + 24x^6 - 96x^5 + 272x^4 - 128x^3 $ & $2^{64} 3^{12} 5^4 23^2$ & 128(8)\\
&$+ 128x^2 - 256x + 128$&$-$ & \\
$4$ & $x^8 + 24x^6 - 32x^5 + 88x^4 - 64x^3 + 64x^2 - 64x + 64$ & $2^{48} 3^{12} 5^4 11^2$ & 80\\
$4$ & $x^8 + 2x^7 + 4x^6 + 8x^5 + 22x^4 + 16x^3 + 24x^2 + 4$ & $2^{20} 3^{12} 5^4 11^2$ & 20\\
$4$ & $x^8 + 6x^7 + 6x^6 - 4x^5 + 72x^4 + 8x^3 + 24x^2 - 48x + 16$ & $2^{42} 3^{12} 17^4$ & 68(8)\\
$4$ & $x^{16} - 48x^{15} + 752x^{14} - 3136x^{13} + 4768x^{12} $ & $2^{232} 3^{56} 5^{16} 11^4$ & 320(8)\\
&  $- 3072x^{11} + 3328x^{10}- 16896x^9 + 47488x^8  $ & $\cdot 17^4 53^4 71^2$ & \\
& $- 43008x^7+ 96256x^6 - 133120x^5+ 45056x^4 $ & $-$ & \\
&$- 98304x^3 + 90112x^2 + 16384$&$-$ & \\
$4$ & $x^{16} - 4x^{15} + 4x^{14} + 32x^{13} - 28x^{12} + 8x^{11} + 160x^{10}$ & $2^{96} 3^{56} 7^8$ & 56\\
& $+ 100x^8 - 240x^7 + 512x^6 + 1344x^5 $ & $\cdot 11^4 29^4 47^2$ & \\
&$+ 2288x^4 + 1664x^3 + 704x^2 + 16$&$-$ & \\
$4$ & $x^{16} + 22x^{14} + 48x^{13} + 44x^{12} + 260x^{11} + 376x^{10} $ & $2^{52} 3^{56} 5^{16} 11^4$ & 80\\
&  $+ 336x^9 + 742x^8+ 528x^7 + 208x^6 + 384x^5 $ & $\cdot 17^4 53^4 71^2$ & \\
&$+ 1192x^4 + 1568x^3 + 752x^2 + 96x + 4$&$-$ & \\\hline
\end{tabular}
\end{center}
\label{tab:2}
\end{table}
\medskip

There are three additional factors of $P_4(x)$ of degree $16$ not listed in Table 1, given by
\begin{align*}
x^{16} &+ 176x^{14} - 832x^{13} + 2288x^{12} - 2688x^{11} + 2048x^{10} + 1920x^9\\
& \ \ + 1600x^8 + 10240x^6 - 1024x^5 - 7168x^4 - 16384x^3 + 4096x^2\\
& \ \ + 8192x + 4096,\\
x^{16}& - 8x^{15} + 288x^{14} - 576x^{13} - 968x^{12} + 5152x^{11} - 1536x^{10} \\
& \ \  - 9408x^9 + 19504x^8 + 18816x^7 - 6144x^6 - 41216x^5 - 15488x^4\\
& \ \  + 18432x^3 + 18432x^2 + 1024x + 256,
\end{align*}
which are factors of $C_{224}(x)$ and $C_{260}(x)$, respectively, and
\begin{align*}
x^{16}& - 36x^{15} + 560x^{14} - 1344x^{13} - 2252x^{12} + 9920x^{11} - 4736x^{10}\\
& \ \  - 19872x^9 + 35200x^8 + 39744x^7 - 18944x^6 - 79360x^5 - 36032x^4 \\
& \ \  + 43008x^3+ 35840x^2 + 4608x + 256,
\end{align*}
which is a factor of $C_{308}(x)$.  Their discriminants are given by $2^{216} 3^{56} 7^8 11^4 29^4 47^2$, $2^{208} 3^{56} 5^8 7^8 13^8 47^4$, and $2^{184} 3^{56} 5^8 7^{16} 11^8 23^8$, respectively.  \medskip

In Table 1, an integer $(n)$ in parentheses in the last column indicates the period of the cofactor of the given factor of $C_d(x)$. \medskip
 
We would like to know what field a root of each of these polynomials generates. Observations from numerical calculations and Table 1 give rise to the following conjecture.\medskip
 
\noindent {\bf Conjecture.} (a) {\it Let $p_d(x)$ be the minimal polynomial of $\beta(w/3)$, where
$$w = k + \frac{\sqrt{-d}}{2} \ \ \textrm{or} \ \ \frac{k+\sqrt{-d}}{2}, \ \ k \equiv 1 \ (\textrm{mod} \ 6),$$
according as $-d \equiv 1$ (mod $3$) is even or odd, and where $9 \mid N(w)$.  Then the irreducible factors of the polynomial
\begin{equation*}
C_d(x) = x^{2h(-d)}p_d\left(\frac{x^3+2}{x}\right)
\end{equation*}
are minimal polynomials of periodic points of the algebraic function $\mathfrak{f}(z)$ defined by $f(z,\mathfrak{f}(z))=0$.}\smallskip
 
\noindent (b) {\it If $q(x)$ is an irreducible factor of $C_d(x)$, then the odd prime divisors of $\textrm{disc}(q(x))$ also divide $\textrm{disc}(p_d(x))$.}
\smallskip
 
\noindent (c) {\it $C_d(x)$ factors over $\mathbb{Q}$ into: 
\begin{enumerate}[(i)]
\item 3 irreducibles of degree $2h(-d)$; 
\item a product of two irreducible factors of degrees $2h(-d)$ and $4h(-d)$; 
\item or is irreducible,
\end{enumerate}
according as $\displaystyle \left(\frac{-d}{2}\right) = 1, 0$ or $-1$.  In all cases a root of an irreducible factor of largest degree generates the ring class field $\Omega_{2f}$ over $K = \mathbb{Q}(\sqrt{-d})$.}
\medskip

We will prove this conjecture in the subsequent section.  See the Remark after the proof of Theorem \ref{thm:1}.

\subsection{Roots of a cubic}
Our goal in this subsection is to find all three roots of the cubic equation $4x^3-\beta(\tau)x+1=0$, which is the relation satisfied by $c(\tau)$ and $\beta(\tau)$. It is already clear that one root of the equation is $c(\tau)$, from (\ref{eqn:3.4}).\bigskip

We have
$$c(\tau)=q^{1/3}\,\frac{(q;q^6)_\infty(q^5;q^6)_\infty}{(q^3;q^6)_\infty^2},$$
where $q=e^{2\pi i\tau}$. Let us define
\begin{align*}
c_1(\tau)&=c(\tau+3/2)\\
&=-q^{1/3}\,\frac{(-q;q^6)_\infty(-q^5;q^6)_\infty}{(-q^3;q^6)_\infty^2}.
\end{align*}

\begin{prop}(a) The following identity holds for $c(\tau)$ and $c_1(\tau)$:
$$c(\tau)+c_1(\tau)+2c^2(\tau)c_1^2(\tau)=0;$$
(b) The following relation holds for $c(\tau)$ and $c_1(\tau/2)$:
$$2c^2(\tau)c_1(\tau/2)+c_1^2(\tau/2)-c(\tau)=0.$$
\label{prop:11}
\end{prop}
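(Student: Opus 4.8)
The plan is to prove both identities of Proposition~\ref{prop:11} by reducing them to identities already established for the single function $c(\tau)$, using the substitution $\tau \mapsto \tau + 3/2$, i.e. $q \mapsto e^{3\pi i}q = -q$. The key observation is that $c_1(\tau) = c(\tau + 3/2)$ is obtained from $c(\tau)$ by replacing $q^{1/3}$ with $(e^{3\pi i})^{1/3} q^{1/3} = e^{\pi i} q^{1/3} = -q^{1/3}$ in the leading factor, and $q$ with $-q$ in every infinite product term; this is exactly what produces the stated product expansion for $c_1(\tau)$. So both $c$ and $c_1$ are ``branches'' of the same $q$-series machinery, differing by the sign change $q \to -q$, and every earlier identity involving $c(\tau)$ and $c(2\tau)$ has a companion obtained by this substitution.

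For part (a), I would start from the relation in Proposition~\ref{prop:1}, namely $u^2 + 2uv^2 - v = 0$ with $u = c(\tau)$, $v = c(2\tau)$, and its reformulation $x^2 + 2xy^2 - y = 0$ for $(x,y) = (c(\tau/2), c(\tau))$ that appears in the alternate proof of Proposition~\ref{prop:1}. Applying $\tau \mapsto \tau + 3/2$ to the relation $c^2(\tau) + 2c(\tau)c^2(2\tau) - c(2\tau) = 0$ and noting that $c(2(\tau + 3/2)) = c(2\tau + 3) = c(2\tau)$ (since $c$ has period $3$ in $\tau$, as $q \mapsto e^{6\pi i}q = q$), I expect to get a relation tying $c_1(\tau)$ to $c(2\tau)$; combining this with the original relation for $c(\tau)$ and eliminating $c(2\tau)$ should yield the symmetric identity $c(\tau) + c_1(\tau) + 2c^2(\tau)c_1^2(\tau) = 0$. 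The symmetry of the target in $c$ and $c_1$ strongly suggests that $c(\tau)$ and $c_1(\tau)$ are the two preimages of a common value under a quadratic-type relation, so the cleanest route may be to show directly from the product expansions that both satisfy one and the same equation $X + 2c(2\tau)X^2 = c(2\tau)$ (or its analogue), so that $\{c(\tau), c_1(\tau)\}$ are its two roots and the identity follows from Vieta's formulas.

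For part (b), the relation $2c^2(\tau)c_1(\tau/2) + c_1^2(\tau/2) - c(\tau) = 0$ should be the $\tau \mapsto \tau + 3/2$ companion of the half-argument relation $x^2 + 2xy^2 - y = 0$ with $(x,y) = (c(\tau/2), c(\tau))$. Here one must be careful: replacing $\tau$ by $\tau + 3/2$ sends $\tau/2$ to $\tau/2 + 3/4$, which is \emph{not} of the form $\sigma + 3/2$, so $c_1(\tau/2)$ is not simply $c$ evaluated at a shifted argument in the naive way. I would instead verify (b) by the same branch-substitution bookkeeping applied directly to the product expansions, tracking the $q \to -q$ sign changes through the relation of Proposition~\ref{prop:1} written for the pair $(c(\tau/2), c(\tau))$.

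The main obstacle I anticipate is the precise bookkeeping of fractional powers and roots of unity under $\tau \mapsto \tau + 3/2$: because the leading factor is $q^{1/3}$ and the products involve $q$-Pochhammer symbols $(q;q^6)_\infty$ etc., one must choose branches of $q^{1/3}$ consistently and verify that the sign $e^{\pi i} = -1$ propagates correctly into the single minus sign shown in the displayed formula for $c_1(\tau)$, and further that the period-$3$ reduction $c(2\tau + 3) = c(2\tau)$ is applied with the correct branch of the cube root. Getting these signs exactly right—so that the final identities come out with the stated coefficients $+1$ and $+2$ rather than sign-flipped versions—is where the real care is needed; the algebra of eliminating $c(2\tau)$ (resp. $c(\tau/2)$) afterward is routine by comparison.
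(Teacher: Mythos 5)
Your approach is correct, and it is genuinely different from the paper's. For (a), the paper first proves the product identity $c(\tau)c_1(\tau)=-c(2\tau)$ (its equation (\ref{eqn:4.2})) directly from the $q$-products, pairing $(q;q^6)_\infty(-q;q^6)_\infty=(q^2;q^{12})_\infty$ and so on, and then substitutes this into Proposition \ref{prop:1}. You instead apply Proposition \ref{prop:1} at $\tau$ and at $\tau+3/2$, use the periodicity $c(\sigma+3)=c(\sigma)$ to conclude that $c(\tau)$ and $c_1(\tau)$ both satisfy the same monic quadratic $X^2+2c^2(2\tau)X-c(2\tau)=0$, check $c\not\equiv c_1$ (clear from the leading terms $\pm q^{1/3}$), and read off (a) from Vieta: $c+c_1=-2c^2(2\tau)$ and $cc_1=-c(2\tau)$, hence $c+c_1+2(cc_1)^2=0$. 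This buys a conceptual explanation of why (a) is symmetric in $c$ and $c_1$, and it recovers (\ref{eqn:4.2}) as the Vieta product formula for free; the paper's route is more computational but establishes (\ref{eqn:4.2}) explicitly, which it then reuses (in its proof of (b) and in the Remark after Proposition \ref{prop:12}). One correction: your parenthetical quadratic ``$X+2c(2\tau)X^2=c(2\tau)$'' has the squares on the wrong factors, and with that version Vieta would give a false identity; the quadratic both functions satisfy is $X^2+2c^2(2\tau)X-c(2\tau)=0$, which is what your primary elimination route produces, so this is only a transcription slip.

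For (b), your worry about $\tau/2\mapsto\tau/2+3/4$ disappears if you shift by $3$ rather than by $3/2$: apply Proposition \ref{prop:1} with its variable equal to $\tau/2+3/2$, i.e.\ to the pair $\bigl(c(\tau/2+3/2),\,c(\tau+3)\bigr)=\bigl(c_1(\tau/2),\,c(\tau)\bigr)$, using periodicity again. This yields $c_1^2(\tau/2)+2c_1(\tau/2)c^2(\tau)-c(\tau)=0$, which is exactly (b), in one line. It is precisely the ``branch-substitution bookkeeping'' you describe (the sign change $q^{1/2}\to-q^{1/2}$ with $q$ fixed), so your plan is sound, just left unexecuted at the point where it becomes trivial. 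By contrast, the paper reaches (b) more laboriously, combining (a) at $\tau/2$, the identity (\ref{eqn:4.2}) at $\tau/2$, and Proposition \ref{prop:1} at $\tau/2$.
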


\begin{proof}
(a) First observe that
\begin{align}
c(\tau)\cdot c_1(\tau)&=q^{1/3}\,\frac{(q;q^6)_\infty(q^5;q^6)_\infty}{(q^3;q^6)_\infty^2}\cdot-q^{1/3}\,\frac{(-q;q^6)_\infty(-q^5;q^6)_\infty}{(-q^3;q^6)_\infty^2}\nonumber\\
&=-q^{2/3}\,\frac{\big[(q;q^6)_\infty(-q;q^6)_\infty\big]\cdot\big[(q^5;q^6)_\infty(-q^5;q^6)_\infty\big]}{(q^3;q^6)_\infty^2(-q^3;q^6)_\infty^2}\nonumber\\
&=-q^{2/3}\,\frac{(q^2;q^{12})_\infty\cdot(q^{10};q^{12})_\infty}{(q^6;q^{12})_\infty^2}\nonumber\\
&=-c(2\tau)\label{eqn:4.2}.
\end{align}
From Proposition \ref{prop:1}, we know that
$$c^2(\tau)+2c(\tau)c^2(2\tau)-c(2\tau)=0.$$
Replacing $c(2\tau)$ by $-c(\tau)c_1(\tau)$ in the above, we get
$$c^2(\tau)+2c^3(\tau)c_1^2(\tau)+c(\tau)c_1(\tau)=0,$$
giving us the desired identity after cancelling a factor of $c(\tau)$ from both sides. See \cite[p. 345, Theorem 1, (2.1)]{hc} for a similar proof of this identity. \medskip

\noindent (b) To prove the relation between $c(\tau)$ and $c_1(\tau/2)$, we employ both Proposition \ref{prop:1} as well as the identity from (a).\medskip

\noindent Using (\ref{eqn:4.2}) in Proposition \ref{prop:11}(a), we get
$$c(\tau)+c_1(\tau)+2c^2(2\tau)=0.$$
After replacing by $\tau$ by $\tau/2$, this can be written as
\begin{equation}
c(\tau/2)+2c^2(\tau)=-c_1(\tau/2)
\label{eqn:4.3}.
\end{equation}
Similarly, upon replacing $\tau$ by $\tau/2$ in the result of Proposition \ref{prop:1}, we find that
$$c^2(\tau/2)+2c^2(\tau)c(\tau/2)-c(\tau)=0,$$
$$c(\tau/2)\cdot\big[c(\tau/2)+2c^2(\tau)\big]-c(\tau)=0.$$
Now, substituting (\ref{eqn:4.3}) into the above, we see that
$$\big[-2c^2(\tau)-c_1(\tau/2)\big]\cdot\big[-c_1(\tau/2)\big]-c(\tau)=0,$$
which gives us
$$2c^2(\tau)c_1(\tau/2)+c_1^2(\tau/2)-c(\tau)=0,$$
finishing the proof.
\end{proof}

This next proposition spells out what the three roots of the cubic are.

\begin{prop} The three roots of the cubic equation $4x^3-\beta(\tau)x+1=0$ are explicitly given by $c(\tau), \frac{c_1(\tau/2)}{2c(\tau)}$ and $\frac{-1}{2c_1(\tau/2)}$.
\label{prop:12}
\end{prop}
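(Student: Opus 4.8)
The plan is to identify the three proposed values as the roots of $4x^3 - \beta(\tau)x + 1 = 0$ by matching the elementary symmetric functions of the proposed roots against the coefficients of the cubic (Vieta's formulas), with the essential content carried by Proposition \ref{prop:11}(b). Throughout write $c = c(\tau)$ and $c_1 = c_1(\tau/2)$, and set $r_1 = c$, $r_2 = \frac{c_1}{2c}$, $r_3 = \frac{-1}{2c_1}$. Dividing the cubic by its leading coefficient gives $x^3 - \frac{\beta(\tau)}{4}x + \frac{1}{4}$, so I must check that $r_1 + r_2 + r_3 = 0$, that $r_1 r_2 + r_1 r_3 + r_2 r_3 = -\frac{\beta(\tau)}{4}$, and that $r_1 r_2 r_3 = -\frac{1}{4}$.

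First I would dispose of the product, which needs no identity at all: the factors $c$ and $c_1$ cancel in $r_1 r_2 r_3 = c \cdot \frac{c_1}{2c} \cdot \frac{-1}{2c_1}$, leaving $-\frac{1}{4}$. Next, for the sum I would clear denominators in $c + \frac{c_1}{2c} - \frac{1}{2c_1} = 0$ by multiplying through by $2cc_1$; this produces exactly $2c^2 c_1 + c_1^2 - c = 0$, which is Proposition \ref{prop:11}(b). Finally, for the middle coefficient I would compute $r_1 r_2 + r_1 r_3 + r_2 r_3 = \frac{c_1}{2} - \frac{c}{2c_1} - \frac{1}{4c}$, substitute $\beta(\tau) = \frac{4c^3+1}{c}$ from (\ref{eqn:3.4}), and clear denominators by multiplying by $4cc_1$; after the common term $-c_1$ cancels from both sides and the result is divided by $2c$, the required equality collapses once again to $c_1^2 - c + 2c^2 c_1 = 0$, i.e. Proposition \ref{prop:11}(b). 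Hence all three Vieta relations hold and the three values are the roots of the cubic; that $r_1 = c$ is a root was in any case already evident from (\ref{eqn:3.4}).

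There is no genuine obstacle in this argument; the noteworthy point is structural, namely that the single relation of Proposition \ref{prop:11}(b) simultaneously secures both the vanishing of the sum of the roots and the correct value of the middle symmetric function, while the product is forced automatically by the shape of $r_2$ and $r_3$. The only place demanding care is the bookkeeping in clearing the fractions for the middle coefficient. An alternative would be to substitute $r_2$ and $r_3$ directly into $4x^3 - \beta(\tau)x + 1$ and verify vanishing; this works but is heavier, since it forces one to reduce $c_1^3$ using $c_1^2 = c - 2c^2 c_1$, so I would favor the symmetric-function route.
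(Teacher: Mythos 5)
Your proposal is correct and follows essentially the same route as the paper: the paper also verifies Vieta's formulas for the three proposed roots, noting the product is trivially $-1/4$, identifying the vanishing of the sum with Proposition \ref{prop:11}(b), and reducing the pairwise-product sum to $-\beta(\tau)/4$ via Proposition \ref{prop:11}(b) together with (\ref{eqn:3.4}). The only cosmetic difference is that the paper substitutes $c_1^2(\tau/2)-c(\tau)=-2c^2(\tau)c_1(\tau/2)$ to produce $-\beta(\tau)/4$ directly, whereas you substitute $\beta(\tau)=(4c^3+1)/c$ and collapse the identity back to Proposition \ref{prop:11}(b) — the same computation run in the opposite direction.
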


\begin{proof}
We prove this in a straightforward way by considering combinations of the roots, in order to find the polynomial that has these three expressions as its roots. From (\ref{eqn:3.4}), we already know that $c(\tau)$ is one root of the above cubic.\medskip

\noindent First, note that the product of the above three roots is clearly $-1/4$. The sum of the roots is
$$c(\tau)+\frac{c_1(\tau/2)}{2c(\tau)}-\frac{1}{2c_1(\tau/2)}=\frac{2c^2(\tau)c_1(\tau/2)+c_1^2(\tau/2)-c(\tau)}{2c(\tau)c_1(\tau/2)},$$
which is $0$ from the result of Proposition \ref{prop:11}(b).\medskip

\noindent Finally, consider the sum of the products of the roots taken two at a time. This sum is
\begin{align*}
\frac{c_1(\tau/2)}{2}-\frac{1}{4c(\tau)}-\frac{c(\tau)}{2c_1(\tau/2)}&=\frac{2c(\tau)c_1^2(\tau/2)-c_1(\tau/2)-2c^2(\tau)}{4c(\tau)c_1(\tau/2)}\\
&=\frac{2c(\tau)\big[c_1^2(\tau/2)-c(\tau)\big]-c_1(\tau/2)}{4c(\tau)c_1(\tau/2)}.
\end{align*}
From Proposition \ref{prop:11}(b), substitute $c_1^2(\tau/2)-c(\tau)=-2c^2(\tau)c_1(\tau/2)$ into the last equation. We find that
\begin{align*}
\frac{2c(\tau)\cdot\big[-2c^2(\tau)c_1(\tau/2)\big]-c_1(\tau/2)}{4c(\tau)c_1(\tau/2)}&=\frac{-4c^3(\tau)c_1(\tau/2)-c_1(\tau/2)}{4c(\tau)c_1(\tau/2)}\\
&=-\frac{4c^3(\tau)+1}{4c(\tau)}\\
&=-\frac{\beta(\tau)}{4},
\end{align*}
by (\ref{eqn:3.4}). From these combinations of the roots, we see that the cubic polynomial with the given three roots has to be $4x^3-\beta(\tau)x+1$.
\end{proof}

\noindent {\bf Remark.} Knowing the three roots of this cubic will allow us to say exactly what the periodic points of the function $f(x,y)$ are, as values of the cubic continued fraction $c(\tau)$, after we prove the main results in the next section. Namely, they are all the conjugates of the values $2c(w/3)$, of $c_1(w/6)/c(w/3) = -1/c(w/6)$, and of $-1/c_1(w/6)$, for the appropriately chosen $w$, where $c_1(\tau)=c(\tau+3/2)$.

\subsection{Class fields generated by values of $c(\tau)$}
In this subsection, we will determine the class fields over $K = \mathbb{Q}(\sqrt{-d})$ generated by suitable values of $c(\tau)$. \medskip

Let $\mathcal{O}$ denote the order of conductor $f$ in the imaginary quadratic field $K$, and let $\Omega_f$ denote the ring class field of conductor $f$ for $K$ corresponding to $\mathcal{O}$.  Also, let $d_K$ denote the discriminant of $K$.  The following well-known degree formula can be found in \cite[p. 132, Theorem 7.24]{co}:
$$[\Omega_f:K]=h(\mathcal{O})=\frac{h(\mathcal{O}_K)f}{[\mathcal{O}_K^*:\mathcal{O}^*]}\prod_{p\mid f}\left(1-\left(\frac{d_K}{p}\right)\frac{1}{p}\right),$$
where $\left(\frac{d_K}{p}\right)$ is the Kronecker symbol.  We assume the discriminant $-d$ of the order $\mathcal{O} = \textsf{R}_{-d}$ satisfies
$$-d=f^2d_K \equiv 1 \ (\textrm{mod} \ 3).$$
The relative degrees of different ring class fields can be calculated using the above formula.  We find that
\begin{align*}
[\Omega_{6f}:\Omega_{f}]&=6\left(1-\left(\frac{d_K}{2}\right)\frac{1}{2}\right)\left(1-\left(\frac{d_K}{3}\right)\frac{1}{3}\right)\\
&=4\left(1-\left(\frac{d_K}{2}\right)\frac{1}{2}\right),
\end{align*}
if the conductor $f$ is odd.  On the other hand, if $f$ is even, the factor corresponding to the prime $p = 2$ drops out, since this factor is present for both $6f$ and for $f$.  In this case we have $[\Omega_{6f}:\Omega_{f}] =4$.  Putting this together with the above formula for odd $f$, we find that
\begin{equation}
[\Omega_{6f}:\Omega_{f}]=\begin{cases}
4, &2 \mid d_K f^2 = -d, \\
2, &-d \equiv 1 \ (\text{mod} \ 8),\\
6, &-d \equiv5 \ (\text{mod} \ 8).
\end{cases}
\label{eqn:4.4}
\end{equation}
In the same way, we can also see that
$$[\Omega_{2f}:\Omega_{f}]=\begin{cases}
2, & 2 \mid d,\\
1, &-d \equiv 1 \ (\text{mod} \ 8),\\
3, &-d \equiv 5 \ (\text{mod} \ 8);
\end{cases}$$
and that
$$[\Omega_{3f}:\Omega_{f}]=2.$$
Hence, we find that
$$[\Omega_{6f}:\Omega_{2f}]=2.$$

We next prove two useful lemmas before we get to the main theorem of this section.

\newtheorem{lem}{Lemma}

\begin{lem} Let $w \in \mathcal{O}$ be the element in \cite[Thm. 1]{mc}:
$$w = \begin{dcases}
k+\frac{\sqrt{-d}}{2}, & \textrm{if} \  2 \mid d,\\
\frac{k+\sqrt{-d}}{2}, & \textrm{if} \ (2,d) = 1,
\end{dcases}$$
where $k^2 \equiv -d/4$, respectively $-d$ (mod $9$) and $k \equiv 1$ (mod $6$).  Then the quantity
$$\beta\left(\frac{2w}{3}\right) = 3 + \left(\frac{\eta(2w/9)}{\eta(2w)}\right)^3$$
generates $\Omega_{2f}$ over $\mathbb{Q}$.
\label{lem:1}
\end{lem}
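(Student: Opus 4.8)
The plan is to recognize $\beta(2w/3)$ as a value of the level-$3$ modular function $\beta$ at a complex-multiplication point of the order $\textsf{R}_{-4d}$ of conductor $2f$, and to reduce the statement to \cite[Thm. 1]{mc} applied to the discriminant $-4d$ in place of $-d$. In both cases $2w$ differs from $\sqrt{-d}$ by a rational integer, so $[1,2w]=[1,\sqrt{-d}]$, a lattice whose multiplier ring is $\mathbb{Z}[\sqrt{-d}]$, the order of discriminant $-4d=d_K(2f)^2$ and hence of conductor $2f$ over $K=\mathbb{Q}(\sqrt{-d})$; note $-4d\equiv -d\equiv 1\pmod 3$, so $-4d$ is admissible. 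I would also use, as the base case, that \cite[Thm. 1]{mc} applied to $-d$ itself gives that $\beta(w/3)=\mathfrak f(w)$ generates the smaller field $\Omega_f$; the whole point of the lemma is that replacing $w/3$ by $2w/3$ enlarges this to $\Omega_{2f}$.

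The case $(2,d)=1$ is immediate. Here $2w=k+\sqrt{-d}=k+\frac{\sqrt{-4d}}{2}$ is exactly the element attached by \cite[Thm. 1]{mc} to the discriminant $-4d$: we have $k\equiv 1\pmod 6$; the required congruence $k^2\equiv -d=-4d/4\pmod 9$ is the hypothesis $k^2\equiv -d\pmod 9$; and $9\mid N(2w)=k^2+d=4N(w)$ follows from $9\mid N(w)$. Since the conductor of $-4d$ is $2f$, \cite[Thm. 1]{mc} then yields directly that $\beta(2w/3)=\mathfrak f(2w)$ generates $\Omega_{2f}$.

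The case $2\mid d$ is the main obstacle, and I expect it to carry most of the work. Now $2w=2k+\sqrt{-d}$ has even leading coefficient, so it is not the normalized element required by \cite[Thm. 1]{mc}; worse, $2k\equiv 2\pmod 3$ while the normalized element for $-4d$ must have leading coefficient $\equiv 1\pmod 3$, and the gap is a shift $\tau\mapsto\tau+1/3$, which is not a $\Gamma(3)$-transformation and genuinely alters the level-$3$ data of $\beta$. I would therefore argue in two steps. For membership, I would carry out the Shimura-reciprocity/conductor computation underlying \cite{mc}, using $9\mid N(w)$ and $k\equiv 1\pmod 6$ to show that the $3$-part of the conductor of the abelian extension cut out by $\beta(2w/3)$ is trivial, so that $\beta(2w/3)\in\Omega_{2f}$ (equivalently, $\beta(2w/3)$ is a $\mathrm{Gal}(\overline{\mathbb{Q}}/K)$-conjugate of the normalized value, and $\Omega_{2f}/K$ being abelian pulls the conjugate back into $\Omega_{2f}$). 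For generation, I would invoke Proposition \ref{prop:10a}: the identity $h(\beta(w/3),\beta(2w/3))=0$ presents $\beta(2w/3)$ as a root of a cubic over $\Omega_f=\mathbb{Q}(\beta(w/3))$, and since $[\Omega_{2f}:\Omega_f]=2$ when $2\mid d$ (computed just above the lemma), it suffices to rule out $\beta(2w/3)\in\Omega_f$; the prime relative degree then forces $\Omega_f(\beta(2w/3))=\Omega_{2f}$. The delicate point — and the only place the arithmetic hypotheses $9\mid N(w)$ and $k\equiv 1\pmod 6$ are genuinely needed — is precisely this conductor computation controlling the level-$3$ behaviour of $\beta$ at $2w/3$ in the even case; everything else reduces either to the direct citation of \cite[Thm. 1]{mc} or to elementary degree bookkeeping.
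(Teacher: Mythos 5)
Your odd case is exactly the paper's: $2w=k+\sqrt{-d}$ is already the normalized element of \cite[Thm.~1]{mc} for the discriminant $-4d$ (conductor $2f$), and that theorem applies directly. The genuine gap is the even case. You correctly identify the obstruction ($2w=2k+\sqrt{-d}$ has leading coefficient $\equiv 2\pmod 3$, so it is not normalized), but neither half of your repair is actually carried out. The membership step is deferred to an unexecuted ``Shimura-reciprocity/conductor computation underlying \cite{mc}''; that computation is the entire difficulty, and asserting that it can be done is not a proof. The generation step is incomplete even granting membership: you never rule out $\beta(2w/3)\in\Omega_f$, and even if you did, Proposition \ref{prop:10a} together with $[\Omega_{2f}:\Omega_f]=2$ would only give $\Omega_f(\beta(2w/3))=\Omega_{2f}$, i.e.\ generation over $\Omega_f$. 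The lemma asserts generation over $\mathbb{Q}$, and nothing in your argument shows $\Omega_f\subseteq\mathbb{Q}(\beta(2w/3))$, so a proper subfield of $\Omega_{2f}$ not containing $\Omega_f$ is not excluded.

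The missing idea is that the even case also reduces to \cite[Thm.~1]{mc}, by two elementary moves. First, $\beta(\tau+3)=\beta(\tau)$ (the cubed $\eta$-quotient picks up $\zeta_8/\zeta_8^9=1$), so $\beta(2w/3)=\beta((2w+9)/3)$, where $2w+9=k'+\sqrt{-d}$ with $k'=2k+9$ odd. Second, $k'\equiv -1\pmod 3$ is still the wrong normalization, but $w'=-k'+\sqrt{-d}$ is normalized: $-k'\equiv 1\pmod 6$, and $k'^2\equiv 4k^2\equiv -d\pmod 9$ from $9\mid N(w)$. Since $\eta(-\bar\tau)=\overline{\eta(\tau)}$, one gets $\beta(w'/3)=\overline{\beta(2w/3)}$. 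Now \cite[Thm.~1]{mc} applied to $w'$ (discriminant $-4d$, conductor $2f$) says $\beta(w'/3)$ generates $\Omega_{2f}$ over $\mathbb{Q}$, and normality of $\Omega_{2f}/\mathbb{Q}$ transfers this to its complex conjugate $\beta(2w/3)$. Note also that your parenthetical reformulation (that $\beta(2w/3)$ is a $\mathrm{Gal}(\overline{\mathbb{Q}}/K)$-conjugate of the normalized value) is not quite the right statement: the conjugation that actually intervenes is complex conjugation, which inverts $K$ and is only a $\mathbb{Q}$-automorphism, so it is normality of the ring class field over $\mathbb{Q}$, not abelianness over $K$, that closes the argument.
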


\begin{proof}
If $d$ is odd, then
$$2w = 2\left(\frac{k+\sqrt{-d}}{2}\right) = k + \sqrt{-d},$$
where $2w$ and $k$ satisfy the conditions of \cite[(1.1)]{mc} for $4d$.  Hence, $\beta(2w/3)$ (denoted $\alpha$ in \cite{mc}) generates $\Omega_{2f}$, by Theorem 1 of \cite{mc}.  Now assume $d$ is even.  Then $2w = 2k + \sqrt{-d}$.  To put this in the required form we add $9$.  Then $2w+9 = 2k+9 +\sqrt{-d}$, where $k' = 2k+9$ is odd.  Then
\begin{align*}
\left(\frac{\eta((2w+9)/9)}{\eta(2w+9)}\right)^3 &= \left(\frac{\eta(\frac{2w}{9}+1)}{\eta(2w+9)}\right)^3\\
& = \frac{\zeta_8\eta(\frac{2w}{9})^3}{\zeta_8^9 \eta(2w)^3}\\
& = \frac{\eta(\frac{2w}{9})^3}{\eta(2w)^3}.
\end{align*}
Noting that $k' \equiv -1$ (mod $3$), $\eta(-k'+\sqrt{-d})$ is the complex conjugate of $\eta(k'+\sqrt{-d})$.  
Then $w' = -k'+\sqrt{-d}$ has the required form, and
$$\beta(w'/3) = \overline{\beta((2w+9)/3)} = \overline{\beta(2w/3)}$$
generates $\Omega_{2f}$, by \cite[Thm. 1]{mc}.  Since $\Omega_{2f}$ is normal over $\mathbb{Q}$, this shows $\beta(2w/3)$ generates $\Omega_{2f}$ also when $d$ is even.
\end{proof}

\begin{lem} The ideal $\wp_3$ is not ramified in the field $\mathbb{Q}(c(w/3))$ over $K=\mathbb{Q}(\sqrt{-d})$.
\label{lem:2}
\end{lem}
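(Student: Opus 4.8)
The plan is to reduce the ramification question to a separability statement for a cubic over the ring class field $\Omega_{2f}$, and then to pin down the reduction of $c(w/3)$ modulo a prime above $\wp_3$ by complex multiplication. First I would record that $3 \nmid f$: since $-d = d_K f^2 \equiv 1 \pmod 3$, the assumption $3 \mid f$ would force $3 \mid d$, a contradiction. Hence $3 \nmid 2f$, and by the standard control of ramification in ring class fields $\wp_3$ is unramified in $\Omega_{2f}/K$. By Lemma \ref{lem:1}, $\Omega_{2f} = \mathbb{Q}(\beta(2w/3))$, and by Proposition \ref{prop:10} the element $\eta = 2c(w/3)$ satisfies the monic integral equation
\[
g(x) = x^3 + \beta(2w/3)\,x^2 - 4 = 0
\]
over $\Omega_{2f}$, where $\beta(2w/3)$ is an algebraic integer by the Corollary to Proposition \ref{prop:5}. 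Since $\mathbb{Q}(c(w/3)) \subseteq \Omega_{2f}(\eta)$ and $\wp_3$ is unramified in $\Omega_{2f}/K$, it suffices to show that every prime $\mathfrak{P}$ of $\Omega_{2f}$ dividing $\wp_3$ is unramified in $\Omega_{2f}(\eta)$.

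For that I would invoke the discriminant criterion. A direct computation gives $\mathrm{disc}_x(g) = 16(\beta(2w/3)^3 - 27)$, and since $v_{\mathfrak{P}}(3) = 1$ this reduces modulo $\mathfrak{P}$ to $\beta(2w/3)^3$. Thus $g$ is separable modulo $\mathfrak{P}$, and $\mathfrak{P}$ is unramified in $\Omega_{2f}(\eta)$, as soon as $\beta(2w/3) \not\equiv 0 \pmod{\mathfrak{P}}$. Writing Proposition \ref{prop:10} as $\beta(2w/3) = (4 - \eta^3)/\eta^2$ and noting that $\eta$ is a $\mathfrak{P}$-unit — if $v_{\mathfrak{P}}(\eta) > 0$ then $g(\eta) = 0$ would give $4 \equiv 0$ — this non-vanishing is equivalent to the single congruence
\[
\eta = 2c(w/3) \not\equiv 1 \pmod{\mathfrak{P}}.
\]

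The heart of the matter, and the step I expect to be the main obstacle, is ruling out $\eta \equiv 1 \pmod{\mathfrak{P}}$. Write $m = v_{\mathfrak{P}}(\eta - 1)$. Expressing the singular modulus $j(w/3)$ through (\ref{eqn:3.5}) in the variable $\eta$ (as in the Corollary to Proposition \ref{prop:5}) and counting $\mathfrak{P}$-valuations shows that $m \ge 2$ forces $v_{\mathfrak{P}}(j(w/3)) = 6 - 6m < 0$; but $w/3$ has discriminant $-d$, so $j(w/3)$ is an algebraic integer, and therefore $m \le 1$. To exclude $m = 1$ I would use reduction theory. From $9 \mid N(w)$ one obtains $[3,w] = \wp_3$ and $[1, w/3] = \tfrac13[3,w] = \tfrac13\wp_3$, so the degree-$3$ isogeny $\mathbb{C}/[1,w] \to \mathbb{C}/[1,w/3]$ corresponds to the ideal $3\wp_3^{-1} = \wp_3'$; as $3$ splits (ordinary reduction) and $\wp_3 \nmid \wp_3'$, Deuring's theory makes this isogeny \emph{separable} at $\mathfrak{P}$. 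On the other hand, the same valuation count with $m = 1$ applied to (\ref{eqn:3.5}) and (\ref{eqn:3.6}) yields $j(w/3) \equiv j(w)^3 \pmod{\mathfrak{P}}$, which would make the reduced isogeny purely inseparable (Frobenius), contradicting the separability just established. Hence $m = 0$, giving $\beta(2w/3) \not\equiv 0 \pmod{\mathfrak{P}}$ and the desired unramifiedness. (Alternatively, the non-vanishing of $\beta(2w/3)$ modulo $\wp_3$ can be read off from the reduction results of \cite{mc}.)
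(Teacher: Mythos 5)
Your overall frame is sound and is essentially the paper's: you convert ramification into separability of a monic integral cubic and observe that, modulo a prime above $3$, its discriminant $16\bigl(\beta(2w/3)^3-27\bigr)$ reduces to $16\,\beta(2w/3)^3$, so everything comes down to the single non-vanishing $\beta(2w/3)\not\equiv 0 \pmod{\wp_3}$. (The paper does the same with the cubic $4x^3-\beta(w/3)x+1$ over $\Omega_f$, whose discriminant it writes as $16\bigl(\beta(w/3)-3\bigr)\bigl(\beta(w/3)^2+3\beta(w/3)+9\bigr)$, and it gets the non-vanishing at once from \cite[p. 862, Lemma 2.3(b)]{mc}, which gives the ideal equation $(\beta(w/3)-3)=\wp_3'^3$.) The genuine gap is in your main argument for this non-vanishing, namely the exclusion of $m=1$. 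The inference ``$j(w/3)\equiv j(w)^3 \pmod{\mathfrak{P}}$ forces the reduced isogeny to be purely inseparable'' is invalid: a congruence of $j$-invariants identifies only the isomorphism class of the target of \emph{some} degree-$3$ isogeny, not which isogeny realizes it. In characteristic $3$ an ordinary curve $\tilde E$ has exactly two order-$3$ subgroup schemes (connected and \'etale), and the two quotients have $j$-invariants $j(\tilde E)^3$ and $j(\tilde E)^{1/3}$; these coincide exactly when $j(\tilde E)\in\mathbb{F}_9$, in which case both Kronecker congruences $j(w/3)\equiv j(w)^3$ and $j(w)\equiv j(w/3)^3$ hold simultaneously and your separable-versus-Frobenius contradiction disappears. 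This degenerate case really occurs: for $d=8$ or $d=11$ (class number one) the values $j(w)$ and $j(w/3)$ are rational integers, so they reduce into $\mathbb{F}_3$, and your argument is vacuous precisely there. A secondary flaw: your valuation counts assume $v_{\mathfrak{P}}(3)=1$, but $\eta$ lies in $\Omega_{2f}(\eta)$, whose ramification over $3$ is the very thing being proved; a priori $v(\eta-1)$ could be a proper fraction, so even the dichotomy ``$m\le 1$ or $m\ge 2$'' (your integrality argument for $m\ge 2$ is otherwise correct, and a nice observation) needs repair.

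Your closing parenthetical remark is, in fact, the proof. By Lemma \ref{lem:1} and its proof, $\beta(2w/3)$ is (up to complex conjugation) the value treated in \cite{mc} for the discriminant $-4d$, so \cite[p. 862, Lemma 2.3(b)]{mc} gives $(\beta(2w/3)-3)=\wp_3'^3$; hence $\beta(2w/3)\equiv\beta(2w/3)-3\not\equiv 0\pmod{\mathfrak{P}}$ for every $\mathfrak{P}\mid\wp_3$, and your discriminant criterion finishes the lemma. I recommend promoting that parenthesis to the body of the argument and deleting the isogeny discussion; this also matches the paper's own proof, which avoids any appeal to Lemma \ref{lem:1} by working with $\beta(w/3)$ over $\Omega_f$ instead of $\beta(2w/3)$ over $\Omega_{2f}$.
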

\begin{proof}
Let $w$ be as in Lemma \ref{lem:1}, and let $(3)=\wp_3\wp_3'$ and $\wp_3=(3,w)$ in the ring of integers $R_K$ of the field $K=\mathbb{Q}(\sqrt{-d})$. From the results of \cite[p. 862, Lemma 2.3(b)]{mc}, we know that the principal ideal $(\beta(w/3)-3) = \wp_3'^3$, so that $\wp_3^3$ does not divide $(\beta(w/3)-3)$. This quantity $(\beta(w/3)-3)$ is referred to as $\gamma^3$ in that paper.\medskip

We know that $c(w/3)$ is a root of the cubic polynomial given by $4x^3-\beta(w/3)x+1=0$, whose discriminant is 
$$16(\beta^3(w/3)-27) = 16[\beta(w/3)-3][\beta^2(w/3)+3\beta(w/3)+9].$$
This shows that $\wp_3$ does not divide the discriminant of the polynomial satisfied by $c(w/3)$ over $\Omega_f$, so its prime divisors don't ramify in $\Omega_f(c(w/3))$ over $\Omega_f$. Since $f$ is not divisible by $3$, it does not ramify in $\Omega_f$ over $K$ either. Hence it does not ramify in $\Omega_f(c(w/3))$ over $K$.
\end{proof}

The next theorem shows that values of the continued fraction $c(\tau)$, for appropriately chosen arguments, are generators for the ring class field of conductor $2f$ over $\mathbb{Q}$.

\begin{thm}
Let $K=\mathbb{Q}(\sqrt{-d})$ with $-d=d_Kf^2\equiv1\,(\mathrm{mod}\,3)$, where $d_K=\mathrm{disc}(K/\mathbb{Q})$. Moreover, let $w$ be the element defined as
\begin{equation*}
w=\begin{dcases}k+\frac{\sqrt{-d}}{2},&2\,|\,d\\\frac{k+\sqrt{-d}}{2},&(2,d)=1,\end{dcases}
\end{equation*}
where $9\mid N(w)$ and $k\equiv1\,(\mathrm{mod}\,6)$. Then the value $c(w/3)$ generates $\Omega_{2f}$ over $\mathbb{Q}$.
\label{thm:1}
\end{thm}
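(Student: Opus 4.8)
The plan is to leverage Lemma \ref{lem:1}, which already establishes that $\beta(w/3)$ generates $\Omega_{2f}$ over $\mathbb{Q}$, and to show that adjoining $c(w/3)$ produces exactly the same field. By the corollary to Proposition \ref{prop:4}, equation (\ref{eqn:3.4}) gives $\beta(w/3) = (4c^3(w/3)+1)/c(w/3)$, which is a rational function of $c(w/3)$. Hence $\beta(w/3) \in \mathbb{Q}(c(w/3))$, so the containment $\Omega_{2f} = \mathbb{Q}(\beta(w/3)) \subseteq \mathbb{Q}(c(w/3))$ is immediate. The entire content of the theorem therefore lies in the reverse containment: I must show that $c(w/3)$ itself lies in $\Omega_{2f}$, i.e. that passing from $\beta(w/3)$ to the continued-fraction value does not enlarge the field.

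To control the reverse inclusion, I would study the cubic $4x^3 - \beta(w/3)x + 1 = 0$, whose roots are determined in Proposition \ref{prop:12} to be $c(w/3)$, $c_1(w/6)/(2c(w/3))$, and $-1/(2c_1(w/6))$. Since $c(w/3)$ is a root of this cubic with coefficients in $\Omega_{2f}$, the degree $[\Omega_{2f}(c(w/3)):\Omega_{2f}]$ divides $3$, so it is either $1$ or $3$. The goal is to rule out the degree-$3$ case. The natural tool is ramification: by Lemma \ref{lem:2}, the prime $\wp_3$ is unramified in $\mathbb{Q}(c(w/3))$ over $K$, and hence in $\Omega_{2f}(c(w/3))$ over $\Omega_{2f}$, because the discriminant of the cubic is $16(\beta^3(w/3)-27)$ and the factorization $(\beta(w/3)-3) = \wp_3'^3$ from \cite{mc} shows $\wp_3$ does not divide it. If $c(w/3)$ had degree $3$ over $\Omega_{2f}$, the extension $\Omega_{2f}(c(w/3))/\Omega_{2f}$ would be a cubic extension in which $\wp_3$ splits completely or stays inert or ramifies; I would argue that the class field theory of the ring class fields forces any cubic extension generated this way to ramify at $\wp_3$ (or at the primes above $3$), contradicting Lemma \ref{lem:2}.

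The cleanest route is likely to compare degrees directly using the degree formula (\ref{eqn:4.4}) and the splitting behavior of $\wp_3$. Since $c(w/3)$ is fixed by the decomposition group at $\wp_3$ — or more precisely since the three roots of the cubic are permuted by the Galois action and the unramifiedness of $\wp_3$ constrains this permutation — I expect that the Frobenius $\tau_3 = \left(\tfrac{\Omega_{2f}/K}{\wp_3}\right)$ acts on the roots in a way compatible with $c(w/3) \in \Omega_{2f}$. Indeed, the relation (\ref{eqn:1.3}), $f(\eta,\eta^{\tau_3})=0$ with $\eta = 2c(w/3)$, which is cited in the Introduction as following from the defining property of Frobenius and the results of \cite{mc}, shows that $c(w/3)^{\tau_3}$ is again a value of $c$ and stays within the field generated by $c(w/3)$; combined with the fact that $c(w/3)$ generates an abelian extension of $K$ (being contained in a field that is at worst a cubic extension of the abelian field $\Omega_{2f}$), this should pin down $c(w/3) \in \Omega_{2f}$.

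The main obstacle I anticipate is rigorously excluding the degree-$3$ possibility: showing that $c(w/3)$ cannot generate a genuine cubic extension of $\Omega_{2f}$. The discriminant-based ramification argument of Lemma \ref{lem:2} shows $\wp_3$ is unramified, but unramifiedness alone does not prevent a cubic extension from existing elsewhere. The decisive input must be that $\mathbb{Q}(c(w/3))$ is \emph{abelian} over $K$ — which should follow because $c(w/3)$ is a modular value whose field is a class field — together with the genus theory or the structure of $\mathrm{Gal}(\Omega_{6f}/K)$ that forces the only intermediate fields between $\Omega_{2f}$ and $\Omega_{6f}$ to have the specific degrees recorded in (\ref{eqn:4.4}). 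If a cubic extension $\Omega_{2f}(c(w/3))$ existed, it would have to sit inside $\Omega_{6f}$ (since $c(w/3)$ lives in a ray/ring class field of level dividing $6f$ by the result of \cite{chkp} cited after Theorem 1.1), but $[\Omega_{6f}:\Omega_{2f}]=2$ leaves no room for a cubic step. This degree obstruction, $3 \nmid [\Omega_{6f}:\Omega_{2f}] = 2$, is what ultimately collapses the cubic and yields $c(w/3) \in \Omega_{2f}$.
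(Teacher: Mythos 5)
Your overall architecture---put $c(w/3)$ into $\Omega_{6f}$, descend to $\Omega_{2f}$ via ramification at $\wp_3$, then convert membership into generation through a rational relation with a $\beta$-value---is the same as the paper's, but two of your load-bearing steps are broken as written. The first is a misreading of Lemma \ref{lem:1}: that lemma asserts that $\beta(2w/3)$ generates $\Omega_{2f}$, not $\beta(w/3)$. By \cite[Thm.~1]{mc}, $\beta(w/3)$ generates only $\Omega_f$ (its minimal polynomial $p_d(x)$ has degree $2h(-d) = [\Omega_f:\mathbb{Q}]$), and $\Omega_f$ is a \emph{proper} subfield of $\Omega_{2f}$ whenever $2 \mid d$ or $-d \equiv 5 \pmod 8$. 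So your ``immediate'' containment $\Omega_{2f} = \mathbb{Q}(\beta(w/3)) \subseteq \mathbb{Q}(c(w/3))$ via (\ref{eqn:3.4}) in fact only yields $\Omega_f \subseteq \mathbb{Q}(c(w/3))$, and the generation direction of the theorem is left unproved. The paper closes this direction with Proposition \ref{prop:10}, $\beta(2w/3) = c^{-2}(w/3) - 2c(w/3)$: once $c(w/3) \in \Omega_{2f}$ is known, Lemma \ref{lem:1} applied to this expression forces $\mathbb{Q}(c(w/3)) = \Omega_{2f}$.

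The second gap is in the descent. The membership $c(w/3) \in \Omega_{6f}$ cannot be obtained from the result of \cite{chkp} you invoke, since that theorem requires $\tau$ to be a basis quotient of an ideal of the \emph{maximal} order with norm prime to $6$, whereas here $9 \mid N(w)$ and the order has conductor $f$; the paper instead derives it from Schertz's theorem \cite[Thm.~5.2.1]{sch} with $N=6$, using the $\Gamma(6)$-invariance of $c(\tau)$ (Section 4.1) and the rationality of the $q$-expansion of $c(-1/\tau)$ (Proposition \ref{prop:2}). Moreover, your dichotomy ``the degree of $c(w/3)$ over $\Omega_{2f}$ divides $3$, hence is $1$ or $3$'' is false: a root of the cubic $4x^3-\beta(w/3)x+1$ has degree $2$ over $\Omega_{2f}$ if the cubic splits there as (linear)$\times$(irreducible quadratic) with $c(w/3)$ a root of the quadratic factor. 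Since $[\Omega_{6f}:\Omega_{2f}]=2$, your parity obstruction $3 \nmid 2$ rules out degree $3$ but leaves precisely the degree-$2$ case $\Omega_{2f}(c(w/3)) = \Omega_{6f}$ standing. Excluding it requires the ramification argument you only gesture at: $\wp_3$ ramifies in $\Omega_{6f}/K$ (already in $\Omega_{3f}/K$), while Lemma \ref{lem:2} together with the theorem on composites of unramified extensions \cite[Thm.~31]{dm} shows $\wp_3$ is unramified in $\Omega_{2f}(c(w/3))/K$; the paper carries this out as a three-case analysis of $\mathrm{Gal}(\Omega_{6f}/\Omega_f)$ according to (\ref{eqn:4.4}). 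Finally, note that your appeal to (\ref{eqn:1.3}) is circular in this context: applying $\tau_3 \in \mathrm{Gal}(\Omega_{2f}/K)$ to $2c(w/3)$ presupposes $c(w/3) \in \Omega_{2f}$, which is exactly what is to be proved.
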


\begin{proof}
We start by showing that $c(w/3)$ lies in $\Omega_{2f}$, and then we use the relation between $\beta(2\tau)$ and $c(\tau)$ to show that $c(w/3)$ generates $\Omega_{2f}$. First, it easily follows from a theorem of Schertz in \cite[p. 128, Theorem 5.2.1]{sch} with $N=6$, that $c(w/3)$ lies in $\Omega_{6f}$, because $c(\tau)$ is a modular function for $\Gamma(6)$ and $c(-1/\tau)$ has rational $q$-coefficients (Prop. \ref{prop:2}). We would like to bring this down to $\Omega_{2f}$. If we let $L=\Omega_f(c(w/3))$ so that $L\subseteq\Omega_{6f}$, we know that $L$ is an abelian extension of $K$ whose conductor $\mathfrak{f}$ divides $6f$ in $K$. We also know that in between the fields $\Omega_{f}$ and $\Omega_{6f}$ lie the fields $\Omega_{2f}$ and $\Omega_{3f}$. From the discussion preceding Lemma \ref{lem:1}, we have three possibilities for the index $[\Omega_{6f}:\Omega_{f}]$, depending on the value of $d$ modulo $8$. We consider the three cases separately.
\medskip

First, consider the case where $-d \equiv 5$ (mod $8$) and $[\Omega_{6f}:\Omega_f]=6$. Since the Galois group $\textrm{Gal}(\Omega_{6f}/\Omega_{f})$ is abelian, it has to be the cyclic group of order $6$, in which case there is exactly one proper subgroup for each index and these correspond to the fields $\Omega_{2f}$ and $\Omega_{3f}$. We can see that $L$ cannot be $\Omega_{3f}$ because we know $\wp_3$ ramifies in $\Omega_{3f}$ over $K$, but not in $L/K$, by Lemma $2$. So $c(w/3)$ cannot generate $\Omega_{3f}$. Therefore, $L$ cannot be $\Omega_{3f}$ or $\Omega_{6f}$, and the only possibility is $L \subseteq \Omega_{2f}$.  \medskip

Now let's consider the case where $-d \equiv 1$ (mod $8$) and $[\Omega_{6f}:\Omega_f]=2$. Here the Galois group is the $2$-element group. Prime divisors of $3$ ramify in $\Omega_{6f}$ over $K$, but not in $\Omega_f$ over $K$. Therefore, $c(w/3)$ has to lie in $\Omega_f$, which in this case is the same as $\Omega_{2f}$.  \medskip

The third and the trickiest case is when $d$ is even and $[\Omega_{6f}:\Omega_f]=4$. In this case the Galois group $G$ is the Klein four-group, because $\Omega_{2f}$ and $\Omega_{3f}$ are both quadratic over $K$. $G$ has three subgroups of index $2$, and two of these correspond to the fields $\Omega_{2f}$ and $\Omega_{3f}$. They are different quadratic extensions, as the prime divisors of $3$ ramify in one but not the other. If we let $\Omega_{2f}=\Omega_f(\sqrt{a})$ and $\Omega_{3f}=\Omega_f(\sqrt{b})$, then the third field is $F=\Omega_f(\sqrt{ab})=\Omega_f(\pm\sqrt{a}\sqrt{b})$. Both fields $\Omega_{2f}$ and $\Omega_{3f}$ are normal over $\mathbb{Q}$. We claim that $F$ is also normal over $\mathbb{Q}$. If $\varphi$ is an isomorphism taking $F/\mathbb{Q}$ to another field $F'/\mathbb{Q}$, then $\varphi$ extends to an automorphism of $\Omega_{6f}$ and has to take $\Omega_{2f}$ and $\Omega_{3f}$ to themselves, as they are each normal over $\mathbb{Q}$.  But then $\varphi$ takes $F$ to a subfield of $\Omega_{6f}$ which contains $\Omega_f$ and which must be $F$ itself, since $\varphi$ is $1-1$ on these subfields.  Hence, $F$ is normal over $\mathbb{Q}$. This implies that if $L = \Omega_f(c(w/3))$ is equal to $F=\Omega_f(\sqrt{ab})$, then $\wp_3$ would not ramify in $F$, so $\wp_3'$ would not ramify either.  Hence, $3$ would not be ramified in $F/\mathbb{Q}$. However, we claim that the prime divisors of $3$ do ramify in this field. We know from \cite[p. 76, Theorem 31]{dm} that if a prime is not ramified in two field extensions, then it is not ramified in their composite field. If $\wp_3$ or $\wp_3'$ doesn't ramify in $\Omega_f(\sqrt{ab})$ over $K$, then since it doesn't ramify in $\Omega_{2f} = \Omega_f(\sqrt{a})$, it would not ramify in $\Omega_f(\sqrt{a}, \sqrt{b})$, which is $\Omega_{6f}$.  So $\wp_3$ must ramify in $\Omega_f(\sqrt{ab})$, hence $L$ is not equal to this field. Therefore, we conclude again that that $c(w/3)$ has to lie in $\Omega_{2f}$.  \medskip

From Lemma \ref{lem:1}, we already know that $\beta(2w/3)$ generates $\Omega_{2f}$, and we have from Proposition \ref{prop:10} that
$$\beta(2w/3)=\frac{1}{c^2(w/3)}-2\,c(w/3).$$
Since $\beta(2w/3)$ is a rational function in $c(w/3)$, the fact that $c(w/3)$ lies in $\Omega_{2f}$ establishes that $c(w/3)$ must generate $\Omega_{2f}$ over $\mathbb{Q}$, as claimed.
\end{proof}

\noindent {\bf Corollary.} {\it If $w$ is defined as in Theorem \ref{thm:1}, with $-d = d_K f^2$, the value
$$\xi=\frac{\eta(w/9)\,\eta(2w/9)}{\eta(w)\,\eta(2w)}$$
lies in the ring class field $\Omega_{2f}$ of $K = \mathbb{Q}(\sqrt{-d})$.} \medskip

\begin{proof} Set $\tau = w/3$ in the first identity of Proposition \ref{prop:8}.  Then the fact that $\frac{1}{c(w/3)}-1-2c(w/3)$ lies in $\Omega_{2f}$ implies the assertion.
\end{proof}

\noindent {\bf Remark.} The polynomial $C_d(x)$ from the above conjecture, given by
\begin{equation}
C_d(x) = x^{2h(-d)}p_d\left(\frac{x^3+2}{x}\right),
\label{eqn:4.5}
\end{equation}
can be written as the norm (product of the conjugates) of the cubic polynomial $x^3-\beta(w/3)x+2$ for $\Omega_f/\mathbb{Q}$, where $p_d(x)$ is a product of $(x-\beta)$ over all the roots $\beta$ and has degree $2h(-d)$. Since a root of $x^3-\beta(w/3)x+2$ has degree $1, 2$ or $3$ over $\mathbb{Q}(\beta(w/3)) = \Omega_f$ and the roots $\beta$ are all conjugate to each other over $\mathbb{Q}$, it is clear that the degrees of the factors of $C_d(x)$ are either $2h(-d)$, $4h(-d)$ or $6h(-d)$.  Hence, we see that $C_d(x)$ either factors over $\mathbb{Q}$ into: (i) 3 irreducibles of degree $2h(-d)$; (ii) a product of two irreducible factors of degrees $2h(-d)$ and $4h(-d)$; or is (iii) irreducible, according as $\displaystyle \left(\frac{-d}{2}\right) = 1, 0$ or $-1$.  This establishes the Conjecture, Part c) in Section 4.2.  The result of Theorem \ref{thm:1} says that in all cases a root of an irreducible factor of the largest degree generates the ring class field $\Omega_{2f}$ over $K = \mathbb{Q}(\sqrt{-d})$. \medskip
 
Theorem \ref{thm:1} is a complement to the result of \cite[Thm. 13]{chkp}, which says that the value $c(\tau)$, where $\tau$ is the basis quotient for an ideal in the maximal order $R_K$ whose norm is relatively prime to $6$, generates the ray class field $\Sigma_6$ over $K = \mathbb{Q}(\sqrt{-d})$.  For the fields we are considering, $\Sigma_6 = \Omega_6$ for $-d = d_K$; while the value $c(w/3)$ in Theorem \ref{thm:1} generates $\Omega_{2} = \Sigma_2$ over $\mathbb{Q}$, when $f=1$.  \medskip

The following theorem is the main result of this paper, showing that suitable values of the Ramanujan's cubic continued fraction are periodic points of a fixed algebraic function.

\begin{thm}
Let $K=\mathbb{Q}(\sqrt{-d})$ with $-d=d_Kf^2\equiv1\,(\mathrm{mod}\,3)$, where $d_K=\mathrm{disc}(K/\mathbb{Q})$ and let $w$ be the element defined as
\begin{equation*}
w=\begin{dcases}k+\frac{\sqrt{-d}}{2},&2\,|\,d\\\frac{k+\sqrt{-d}}{2},&(2,d)=1,\end{dcases}
\end{equation*}
where $9\mid N(w)$ and $k\equiv1\,(\mathrm{mod}\,6)$. Then the generator $2c(w/3)$ of the field $\Omega_{2f}$ over $\mathbb{Q}$ and all the roots of $C_d(x)$ in $(\ref{eqn:4.5})$ are periodic points of the algebraic function $\mathfrak{f}(z)$ defined by $f(z,\mathfrak{f}(z))=0$, where $f(x,y)=x^3(y^2+y+1)-y(y^2-2y+4)$.
\label{thm:2}
\end{thm}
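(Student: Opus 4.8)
The plan is to generate the required cycles from the Galois action of the Frobenius automorphism at $\wp_3$, exactly as anticipated by (\ref{eqn:1.3}). The single modular ingredient is the identity behind the Corollary to Proposition \ref{prop:3}: because $8g(x/2,y/2)=f(x,y)$, the relation $f(2c(\tau),2c(3\tau))=0$ holds identically in $\tau$. Before using it I would record three facts about the roots of $C_d(x)$ in (\ref{eqn:4.5}). First, by the Remark following Theorem \ref{thm:1} these roots are precisely the roots of the cubics $x^3-\beta^{\sigma}x+2$, where $\beta^{\sigma}$ ranges over the conjugates of $\beta(w/3)\in\Omega_f$; since $\Omega_{2f}/\Omega_f$ is cyclic, each such cubic splits already in $\Omega_{2f}$, so every root of $C_d$ lies in $\Omega_{2f}$. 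Second, by Proposition \ref{prop:12} the three roots attached to $\sigma=\mathrm{id}$ are $2c(w/3),\ c_1(w/6)/c(w/3)$ and $-1/c_1(w/6)$, and each of these (being one of the three points in the fibre of the degree-$3$ map $c\mapsto\beta=\frac{4c^3+1}{c}$ over $\beta(w/3)$) is a value $2c(\tau_\rho)$ at a CM point $\tau_\rho$ for the order $\textsf{R}_{-d}$; the same holds for every conjugate cubic. Third, $C_d\in\mathbb{Z}[x]$ and, by Lemma \ref{lem:2}, $\wp_3=(3,w)$ is unramified in $\Omega_{2f}/K$, so the Frobenius $\tau_3=\left(\tfrac{\Omega_{2f}/K}{\wp_3}\right)$ is defined and permutes the roots of $C_d$.

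The heart of the argument is to show that $\tau_3$ realizes the substitution $\tau\mapsto3\tau$ on each of these CM values, i.e. that for every root $\rho=2c(\tau_\rho)$ of $C_d$ one has $\rho^{\tau_3}=2c(3\tau_\rho)$; for $\rho=\eta:=2c(w/3)$ this is $\eta^{\tau_3}=2c(w)$, which is (\ref{eqn:1.3}). I would prove it by combining the defining congruence of the Frobenius, $\rho^{\tau_3}\equiv\rho^{N\wp_3}=\rho^{3}\pmod{\mathfrak{P}}$ for a prime $\mathfrak{P}\mid\wp_3$ of $\Omega_{2f}$, with the results of \cite{mc} on the action of $\tau_3$ on the values of $\beta$, together with (\ref{eqn:3.4}) and Proposition \ref{prop:10}, which express $\beta$ rationally in $c$. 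The subtlety is that $2c(\tau_\rho)$ is merely one of the three roots $2c(\tau_\rho),\ c_1(\tau_\rho/2)/c(\tau_\rho),\ -1/c_1(\tau_\rho/2)$ of $x^3-\beta(\tau_\rho)x+2$ over $\Omega_f$, so lifting the Galois action from the $\beta$-level to the $c$-level leaves a threefold ambiguity; it is the congruence modulo $\mathfrak{P}$ that selects $2c(3\tau_\rho)$ as the correct image. Equivalently this is the statement $\rho^{\tau_3}=T(\rho)$ of (\ref{eqn:1.5}), with $T$ the single-valued $3$-adic solution of $f(x,T(x))=0$ in (\ref{eqn:1.4}), whose uniqueness (for $\rho\not\equiv1$) is Lemma \ref{lem:4}. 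I expect this identification to be the main obstacle of the proof.

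Granting the crux, the conclusion is immediate. Specializing the modular identity at $\tau=\tau_\rho$ gives $f(2c(\tau_\rho),2c(3\tau_\rho))=0$, hence $f(\rho,\rho^{\tau_3})=0$ for every root $\rho$ of $C_d$; thus consecutive elements of each $\tau_3$-orbit on the root set are joined by the correspondence $f=0$. Since $\tau_3$ has finite order $n=\mathrm{ord}(\tau_3)$ in $\mathrm{Gal}(\Omega_{2f}/K)$, each orbit closes up: writing $\rho_k=\rho^{\tau_3^{\,k}}$ we have $\rho_n=\rho_0$ and $f(\rho_0,\rho_1)=f(\rho_1,\rho_2)=\cdots=f(\rho_{n-1},\rho_0)=0$. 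This exhibits every root of $C_d$, and in particular $\eta=2c(w/3)$, as a periodic point of $\mathfrak{f}(z)$, of period dividing $n$. (The sharper claim that the minimal period of $\eta$ equals $n$ needs the $3$-adic uniqueness of $T$ and is not required here.)

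Two points will need care. The identity $\rho^{\tau_3}=2c(3\tau_\rho)$ must be verified uniformly across a whole fibre, not merely for $2c(w/3)$: the other two roots are again $2c$-values at CM points, and since $c$ is a modular function for $\Gamma(6)$ the same reciprocity computation applies, but this uniformity should be made explicit. Also, the closing step tacitly uses that $f$ has rational coefficients, so that $\tau_3$ (and indeed any element of $\mathrm{Gal}(\overline{\mathbb{Q}}/\mathbb{Q})$) carries a solution of $f=0$ to a solution; this is precisely what guarantees that conjugating one cycle over $\mathbb{Q}$ produces the cycles through the remaining roots of $C_d$.
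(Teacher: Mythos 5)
Your frame agrees with the paper's at both ends: the cycle is produced by applying powers of the Frobenius $\tau_3$ to a single relation $f(\rho,\rho^{\tau_3})=0$, and the finite order of $\tau_3$ plus the rationality of $f$ close each orbit and sweep up all conjugates. But the one step you defer as ``the main obstacle'' --- establishing $f(\rho,\rho^{\tau_3})=0$, which you recast as $\rho^{\tau_3}=2c(3\tau_\rho)$ --- is the entire content of the theorem, and the route you sketch for it has genuine gaps. (i) It is circular within the paper's logic: the identity (\ref{eqn:1.5}), $\rho^{\tau_3}=T(\rho)$, is \emph{deduced} in Section 5 from the conclusion $f(\eta,\eta^{\tau_3})=0$ of this very theorem together with Lemma \ref{lem:4}, so it cannot serve as an input. (ii) Your uniformity claim is unsupported: by Proposition \ref{prop:12} the other two roots of $x^3-\beta(w/3)x+2$ are $c_1(w/6)/c(w/3)=-1/c(w/6)$ and $-1/c_1(w/6)$; exhibiting each of these as $2c(\tau_\rho)$ at a CM point \emph{for which moreover} $\rho^{\tau_3}=2c(3\tau_\rho)$ is a nontrivial reciprocity problem (replacing $w/3$ by a $\Gamma(3)$-translate changes the target value $\beta(3\tau_\rho)$, since $\beta$ is not invariant under conjugates of $\Gamma(3)$ by $\tau\mapsto 3\tau$), and when $-d\equiv1\pmod 8$ these two roots are not even conjugate to $2c(w/3)$ (the three irreducible factors of $C_d$ are distinct), so Galois conjugation cannot rescue them. (iii) Even for $\rho=2c(w/3)$, the congruence-selection step you describe needs the exact identification $\beta(w/3)^{\tau}=\beta(w)$ from \cite{mc}, coprimality of $\beta(w)$ to $\wp_3$, and distinctness mod $\mathfrak{P}$ of the roots of the image cubic --- a mod-$\wp_3$ computation of precisely the kind you hoped to avoid, here left unproven.

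The paper's proof sidesteps the modular identification of $\eta^{\tau_3}$ altogether. It uses the polynomial identity
\begin{equation*}
x^3y^3\,g_1\!\left(\frac{x^3+2}{x},\frac{y^3+2}{y}\right)=f(x,y)\,k(x,y),\qquad g_1(x,y)=(y^2+3y+9)x^3-(y+6)^3,
\end{equation*}
with an explicit cofactor $k$. Taking $x=\eta$, $y=\eta^{\tau_3}$ for \emph{any} root $\eta$ of $x^3-bx+2$, $b=\beta(w/3)$, the left side vanishes because $\frac{(\eta^{\tau_3})^3+2}{\eta^{\tau_3}}=b^{\tau_3}=b^{\tau}$ and $g_1(b,b^{\tau})=0$ by \cite{mc}; hence $f(\eta,\eta^{\tau_3})\,k(\eta,\eta^{\tau_3})=0$. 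The extraneous factor is then killed $3$-adically: $k(x,x^3)\equiv 2(x+2)^{21}\pmod 3$ and $\eta^{\tau_3}\equiv\eta^3\pmod{\wp_3}$ give $k(\eta,\eta^{\tau_3})\equiv 2(\eta+2)^{21}\pmod{\wp_3}$, and $\eta\equiv 1\pmod{\mathfrak p}$ is impossible since it would force $b\equiv 0\pmod{\mathfrak p}$, contradicting the coprimality of $\beta(w/3)$ to $\wp_3$ from \cite{mc}. This argument requires only the weaker input $g_1(b,b^{\tau})=0$ (not the Frobenius action on modular values), and it is uniform over all three roots of the cubic, hence over all roots of $C_d(x)$ after conjugation. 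To make your proposal into a proof you would either have to supply an independent Shimura-reciprocity argument for $\rho^{\tau_3}=2c(3\tau_\rho)$ at level $6$, uniformly over the whole fibre, or adopt the factorization-plus-congruence mechanism above.
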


\begin{proof}
The identity between $\beta(\tau)$ and $c(\tau)$ in (\ref{eqn:3.4}) can be written as
$$\beta(\tau)=\frac{\big(2c(\tau)\big)^3+2}{\big(2c(\tau)\big)}.$$
Let $g(x,y) = x^3(4y^2+2y+1)-y(y^2-y+1)$, which is the relation satisfied by $x=c(\tau)$ and $y=c(3\tau)$ in the Corollary to Proposition \ref{prop:3}. Then let
\begin{equation}
f(x,y):=8g(x/2,y/2)=x^3(y^2+y+1)-y(y^2-2y+4),
\label{eqn:4.6}
\end{equation}
so that $f(2c(\tau),2c(3\tau))=0$.
\medskip

Let $p_d(x)$ be the minimal polynomial of $\beta(w/3)$ over $\mathbb{Q}$. From \cite[p. 856, Theorem 2]{mc}, the roots of $p_d(x)$ are the periodic points of the algebraic function whose minimal polynomial over $\mathbb{Q}(x)$ is 
$$g_1(x,y) = (y^2 + 3y + 9)x^3 - (y + 6)^3.$$
Then we claim that the irreducible factors of the polynomial
$$C_d(x) = x^{2h(-d)}p_d\left(\frac{x^3+2}{x}\right)$$
are minimal polynomials of periodic points of the algebraic function $\mathfrak{f}(z)$ defined by $f(z,\mathfrak{f}(z)) = 0$. This will follow from the identity
\begin{align*}
x^3 y^3 \cdot& g_1\left(\frac{x^3 + 2}{x}, \frac{y^3 + 2}{y} \right) = (x^3y^2 + x^3y + x^3 - y^3 + 2y^2 - 4y)\\
& \times (x^6 y^5 - x^6 y^4 + 3x^6 y^3 + x^3 y^6 + 2x^6 y^2 + 2x^3 y^5 + 4x^6 y + 6x^3 y^4 \\
& \ \ \ + 2x^3 y^3 + 28x^3 y^2 + 24 x^3 y - 8y^4 - 16y^3 - 24y^2 - 16y - 8),
\end{align*}
where the first factor on the right is equal to $f(x,y)$. Note that the degree of $p_d(x)$ is $2h(-d)$ from \cite{mc}.
\medskip

Let $-d \equiv 1$ (mod $3$).  Let $\eta$ be any root of the polynomial $x^3-\beta(w/3)x+2=0$. Then $\eta$ lies in either $\Omega_f$ or $\Omega_{2f}$. We show that $\eta$ is a periodic point of the function $\mathfrak{f}(z)$ satisfying $f(z, \mathfrak{f}(z)) = 0$, by showing that
$$f(\eta,\eta^{\tau_3}) = 0, \ \ \textrm{with} \ \eta = 2c(w/3), \ \ \tau_3 = \left(\frac{\Omega_{2f}/K}{\wp_3}\right).$$
Recall that $\tau_3$ is the unique automorphism in $\textrm{Gal}(\Omega_{2f}/K)$ for which
$$\alpha^{\tau_3} \equiv \alpha^3 \ (\textrm{mod} \ \wp_3),$$
for all $\alpha \in \Omega_{2f}$ which are integral (or integral for $\wp_3$).  Now
$$\tau_3|_{\Omega_f} =  \left(\frac{\Omega_{f}/K}{\wp_3}\right) = \tau,$$
so that with $b=\beta(w/3) \in \Omega_f$, we have $b^{\tau_3} = b^{\tau}$.  Since $g_1(b,b^\tau) = 0$, by \cite[p. 873, (4.12)]{mc} (noting that $b$ is denoted by $\alpha$ in that paper), the above identity, using $x = \eta, y = \eta^{\tau_3}$ and $b = \frac{\eta^3+2}{\eta}$, gives that
$$f(\eta,\eta^{\tau_3}) k(\eta,\eta^{\tau_3}) = 0,$$
where
\begin{align*}
k(x,y) & = x^6 y^5 - x^6 y^4 + 3x^6 y^3 + x^3 y^6 + 2x^6 y^2 + 2x^3 y^5 + 4x^6 y + 6x^3 y^4 \\
& \ \ \ + 2x^3 y^3 + 28x^3 y^2 + 24 x^3 y - 8y^4 - 16y^3 - 24y^2 - 16y - 8.
\end{align*}
Reducing this polynomial mod $3$, we find that
$$k(x,y)\equiv(x^6y +x^3y^2+1)(y+2)^4\,(\textrm{mod}\,3)$$
from which it follows that
$$k(x,x^3)\equiv2(x+2)^{21}\,(\textrm{mod}\,3).$$
We want to show that $f(\eta,\eta^{\tau_3}) = 0$.  We show $k(\eta,\eta^{\tau_3}) \neq 0$.  Considering $k(\eta,\eta^{\tau_3})$ mod $\wp_3$ yields that
$$k(\eta,\eta^{\tau_3}) \equiv k(\eta,\eta^3) \equiv 2(\eta + 2)^{21} \ (\textrm{mod} \ \wp_3).$$
If $\eta \equiv 1$ (mod $\mathfrak{p}$), for some prime divisor $\mathfrak{p}$ of $\wp_3$ in $\Omega_{2f}$, then the relation $\eta^3-b \eta+2 = 0$ implies that $b \equiv 0$ (mod $\mathfrak{p}$).  But $\mathfrak{p} \mid \wp_3$, and according to \cite[p.862, Lemma 2.3]{mc} $b =\beta(w/3)$ (called $\alpha$ in that paper) is relatively prime to $\wp_3$.  It follows that $k(\eta,\eta^{\tau_3}) \not \equiv 0$ mod $\wp_3$, which implies that $k(\eta,\eta^{\tau_3}) \neq 0$.  This proves that $f(\eta,\eta^{\tau_3}) = 0$. \medskip

If the order of $\tau_3$ is $n$, then applying powers of $\tau_3$ gives us that
$$f(\eta,\eta^{\tau_3})=f(\eta^{\tau_3},\eta^{\tau_3^2})=\dots=f(\eta^{\tau_3^{n-1}},\eta)=0,$$
which implies that $\eta$ is a periodic point of $\mathfrak{f}(z)$.
\end{proof}

This establishes the Conjecture, Part a) in Section 4.2.  Noting the Remark after Proposition \ref{prop:12}, this also implies the assertion of Theorem 1.2 in the Introduction.  \medskip

We also note the relation
\begin{equation}
x^3 y^3 f\left(\frac{-2}{x},\frac{-2}{y}\right) = 8f(x,y);
\label{eqn:4.7}
\end{equation}
this implies that the map $x \rightarrow -2/x$ maps the set of periodic points of period $n$ to itself.  Furthermore, it is easy to check that the map $A(x) = \frac{2-2x}{2+x}$ derived from Proposition \ref{prop:2} has the following effect:
\begin{equation*}
(2+x)^3(2+y)^3 f(A(x),A(y)) = 6^3 f(y,x).
\end{equation*}
There is a similar formula involving $B(x) = A(-2/x) = \frac{x+2}{x-1}$:
\begin{equation}
(x-1)^3(y-1)^3 f(B(x),B(y)) = 3^3 f(y,x).
\label{eqn:4.8}
\end{equation}

\begin{thm}
The only periodic points of the algebraic function $\mathfrak{f}(z)$ are the fixed points $0,1,-2,\pm\sqrt{-2}$ and all the roots of $C_d(x)$ in (\ref{eqn:4.5}), as $-d$ varies over all negative quadratic discriminants satisfying $-d \equiv 1$ (mod $3$).
\label{thm:3}
\end{thm}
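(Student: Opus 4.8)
The plan is to show the containment in both directions. By Theorem \ref{thm:2} we already know that the fixed points $0, 1, -2, \pm\sqrt{-2}$ and all roots of $C_d(x)$ are periodic points; the content of this theorem is the reverse inclusion, namely that \emph{every} periodic point of $\mathfrak{f}(z)$ arises in this way. First I would verify directly that $0, 1, -2, \pm\sqrt{-2}$ are precisely the fixed points, i.e. the roots of $f(z,z) = 0$. Computing $f(z,z) = z^3(z^2+z+1) - z(z^2-2z+4)$ and factoring should yield exactly these five values, confirming they are fixed points (period $1$). The crux is then to characterize the periodic points of period $n \ge 2$ and match them against the roots of the $C_d(x)$.

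\medskip

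The main strategy should be to exploit the $3$-adic structure described in the Introduction, specifically the single-valued function $T(x)$ from equation (\ref{eqn:1.4}) and Lemma \ref{lem:4}. The key point is that over the maximal unramified extension $\textsf{K}_3$ of $\mathbb{Q}_3$, for any $x \not\equiv 1 \pmod 3$, the equation $f(x,y) = 0$ has a \emph{unique} solution $y = T(x)$ in $\textsf{K}_3$. This rigidity is what forces periodic points to be of complex-multiplication type. Given any periodic point $a$ of $\mathfrak{f}(z)$ with period $n$, forming a cycle $a = a_0, a_1, \ldots, a_{n-1}, a_n = a$ with $f(a_{i}, a_{i+1}) = 0$, I would argue that these values generate an abelian extension of an imaginary quadratic field and that the cycle is realized $3$-adically by iteration of $T$. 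The relation (\ref{eqn:1.5}), $\eta^{\tau_3} = T(\eta)$, identifies the action of $\mathfrak{f}(z)$ along a cycle with the action of the Frobenius automorphism $\tau_3$, and the cycle structure is then governed entirely by Galois theory.

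\medskip

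The technical heart of the argument is to rule out periodic points that do \emph{not} come from values of $c(w/3)$. I would use the transformation formulas (\ref{eqn:4.7}) and (\ref{eqn:4.8}) together with the map $A(x)$ from Proposition \ref{prop:2}, since these generate symmetries of the set of periodic points; Proposition \ref{prop:12} already tells us that the three roots $c(\tau), \frac{c_1(\tau/2)}{2c(\tau)}, \frac{-1}{2c_1(\tau/2)}$ of the cubic $4x^3 - \beta(\tau)x + 1 = 0$ (scaled by $2$) exhaust the $\mathfrak{f}$-orbit structure attached to a given $\beta$. Thus any periodic point $a$ satisfies a cubic $x^3 - b x + 2 = 0$ for some $b$ lying in a ring class field, and $b$ must itself be a periodic point of the auxiliary function with minimal polynomial $g_1(x,y)$ from \cite{mc}. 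By \cite[Theorem 2]{mc} the periodic points of that function are exactly the conjugates of the $\beta(w/3)$, so $b = \beta(w/3)$ for some admissible $w$, forcing $a$ to be a root of the corresponding $C_d(x)$.

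\medskip

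The hard part will be establishing that an \emph{arbitrary} periodic point $a$ in $\overline{\mathbb{Q}}$ — not assumed a priori to have CM origin — must in fact have its associated $b$-value equal to one of the $\beta(w/3)$. This requires showing that the $3$-adic realization $\eta^{\tau_3} = T(\eta)$ is not merely sufficient but captures \emph{all} finite cyclic behavior: one must argue that the minimal period of $a$ under $\mathfrak{f}$ forces $b$ into the discrete set of CM values, ruling out any ``spurious'' algebraic cycles. I expect this to rest on embedding the cycle into $\textsf{K}_3$, where the uniqueness of $T(x)$ collapses the multivalued $\mathfrak{f}$ to a genuine single-valued dynamical system, so that finite periodicity is equivalent to $\tau_3$ having finite order in a Galois group — which, by the theory of complex multiplication via \cite{mc}, happens precisely for the arguments $w$ described in Theorem \ref{thm:1}. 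The assertion that all periodic points are algebraic integers then follows from the Corollary to Proposition \ref{prop:5}, since each periodic point satisfies a monic integral relation through $x^3 - bx + 2 = 0$ with $b$ an algebraic integer.
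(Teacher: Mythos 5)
Your third paragraph contains the kernel of the paper's actual proof: send a periodic point $\eta$ to $\lambda=(\eta^3+2)/\eta$ and reduce everything to the classification of the periodic points of the auxiliary function $g_1(x,y)=(y^2+3y+9)x^3-(y+6)^3$ in \cite[Theorem 2]{mc}. But the step that makes this reduction work is missing, and what you offer in its place does not do the job. To know that $\lambda$ is a periodic point of $g_1$ you must convert the entire cycle $f(\eta_1,\eta_2)=\cdots=f(\eta_n,\eta_1)=0$ into a cycle for $g_1$; the paper does this with the polynomial identity (already displayed in the proof of Theorem \ref{thm:2})
$$x^3y^3\, g_1\left(\frac{x^3+2}{x},\frac{y^3+2}{y}\right)=f(x,y)\,k(x,y),$$
which shows that $f(\eta_i,\eta_{i+1})=0$ forces $g_1(\lambda_i,\lambda_{i+1})=0$, where $\lambda_i=(\eta_i^3+2)/\eta_i$. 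You instead invoke Proposition \ref{prop:12} and the symmetry formulas (\ref{eqn:4.7}), (\ref{eqn:4.8}); but Proposition \ref{prop:12} is a statement about the roots of $4x^3-\beta(\tau)x+1$ for the CM values $\beta(\tau)$, and says nothing about an arbitrary periodic point in $\overline{\mathbb{Q}}$, while your phrase ``for some $b$ lying in a ring class field'' assumes exactly what is to be proved. Two exclusions are also needed and not addressed: $\eta_i\neq 0$, so that the $\lambda_i$ are defined (a cycle through $0$ collapses to the fixed point $0$), and $\lambda_i\neq 3$, the one periodic point of $g_1$ outside the CM list; the latter holds because $x^3-3x+2=(x-1)^2(x+2)$, so $\lambda_i=3$ would force $\eta_i\in\{1,-2\}$, i.e.\ a fixed point, which is excluded.

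Your second and fourth paragraphs are a false trail, and the fourth contradicts the third. Once the reduction to $g_1$ is made, \cite[Theorem 2]{mc} already classifies \emph{all} periodic points of $g_1$ --- the fixed point $3$ and the conjugates of the values $\beta(w/3)$ --- so there are no ``spurious'' non-CM cycles left to rule out and no new $3$-adic argument is required in this theorem; that hard work was done in \cite{mc}, which is precisely why the paper's proof is short. Moreover, the plan in your second paragraph is circular as stated: for a periodic point not yet known to lie in a ring class field there is no Frobenius $\tau_3$ whose action could be compared with $T(x)$, so one cannot ``argue that these values generate an abelian extension of an imaginary quadratic field'' as a first step. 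In the paper, the $3$-adic function $T(x)$ of Lemma \ref{lem:4} enters only afterwards, in Proposition \ref{prop:15}, to identify the minimal period of $2c(w/3)$ with the order of $\tau_3$; it plays no role in the proof of Theorem \ref{thm:3} itself.
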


\begin{proof}
Let the polynomials $g_1(x,y), f(x,y)$ and $k(x,y)$ be the same as in the proof of Theorem \ref{thm:2}, so that we have the identity
$$x^3y^3\cdot g_1\left(\frac{x^3+2}{x},\frac{y^3+2}{y}\right)=f(x,y)\cdot k(x,y).$$
From \cite[Theorem 2]{mc}, we know that the only periodic points of the algebraic function $y=\mathfrak{g}(z)$ whose minimal polynomial over $\mathbb{Q}(x)$ is $g_1(x,y)$ are its fixed point $x=3$ and the values $\beta(w/3)$ and its conjugates, as $-d$ ranges over discriminants $\equiv 1$ (mod $3$) of imaginary quadratic orders.\medskip

First, we find the fixed points by setting $y=x$ in $f(x,y)$ in $(\ref{eqn:4.6})$. This gives us
$$f(x,x)=x^5+x^4+2x^2-4x=x(x-1)(x+2)(x^2+2).$$
Hence, the values $0,1,-2,\pm\sqrt{-2}$ are the fixed points of $\mathfrak{f}(z)$.\medskip

Now let $\eta$ be a periodic point different from the fixed points of the algebraic function $\mathfrak{f}(z)$ such that $f(z,\mathfrak{f}(z))=0$. Then there exist $\eta_1=\eta,\eta_2,\eta_3,\dots,\eta_n\in\overline{\mathbb{Q}}$ such that 
$$f(\eta_1,\eta_2)=f(\eta_2,\eta_3)=\dots=f(\eta_n,\eta_1)=0.$$
If we let $\lambda_i=\frac{\eta_i^3+2}{\eta_i}$, then the above identity will give us that
$$g_1(\lambda_1,\lambda_2)=g_1(\lambda_2,\lambda_3)=\dots=g_1(\lambda_n,\lambda_1)=0,$$
indicating that $\lambda_1$ is a periodic point of function $\mathfrak{g}(z)$. Note that $\lambda_i \neq 3$, since $\eta_i \neq 1, -2$, which are fixed points that we are avoiding. Hence, by the results of \cite[Theorem 2]{mc}, this shows that $\lambda_1$ is a conjugate of $\beta(w/3)$ for some discriminant $-d$ and hence a root of the same minimal polynomial $p_d(x)$. Since $\lambda_1=\frac{\eta^3+2}{\eta}$, this shows that $\eta$ is a root of $C_d(x)$ in (\ref{eqn:4.5}). This completes the proof.
\end{proof}

Since all periodic points of $\mathfrak{f}(z)$ with periods greater than $1$ are roots of some polynomial $C_d(x)$, they must all be algebraic integers.  Equation (\ref{eqn:4.7}) shows that the map $U(x) = \frac{-2}{x}$ maps periodic points to periodic points, and therefore $\frac{2}{\alpha}$ is an algebraic integer, for any periodic point $\alpha$.  We can now prove Part b) of the Conjecture in Section 4.2.

\begin{prop}
If $q(x)$ is an irreducible factor of $C_d(x)$, then any odd prime factor of $\textrm{disc}(q(x))$ also divides $\textrm{disc}(p_d(x))$.  If the roots of $q(x)$ are units, the same assertion holds for the prime $2$.
\label{prop:18}
\end{prop}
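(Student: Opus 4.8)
The plan is to detect the primes dividing $\textrm{disc}(q)$ through coincidences of roots modulo primes of $\overline{\mathbb{Q}}$. Write $C_d(x)=\prod_i (x^3-\beta_i x+2)$, where the $\beta_i$ are the conjugates of $\beta(w/3)$, i.e. the roots of $p_d$; every root $\eta$ of $q$ then satisfies some cubic $\eta^3-\beta_i\eta+2=0$, and since the three roots of each such cubic have product $-2$, each $\eta$ divides $2$ and is a unit at every prime not lying over $2$. An odd prime $p$ divides $\textrm{disc}(q)$ exactly when two distinct roots $\eta,\eta'$ of $q$ satisfy $\eta\equiv\eta'\ (\textrm{mod}\ \mathfrak{p})$ for some prime $\mathfrak{p}\mid p$. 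I would separate the analysis according to whether the corresponding roots $\beta=\frac{\eta^3+2}{\eta}$ and $\beta'=\frac{\eta'^3+2}{\eta'}$ of $p_d$ are distinct.

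Suppose first $\beta\neq\beta'$. A short computation gives $\beta-\beta'=\frac{(\eta-\eta')[\eta\eta'(\eta+\eta')-2]}{\eta\eta'}$. Because $\mathfrak{p}$ is odd, $\eta$ and $\eta'$ are $\mathfrak{p}$-units, so $\eta\equiv\eta'\ (\textrm{mod}\ \mathfrak{p})$ forces $\mathfrak{p}\mid(\beta-\beta')$. Since $\beta-\beta'$ is one of the factors of $\textrm{disc}(p_d)=\pm\prod_{i<j}(\beta_i-\beta_j)^2$ and $\beta\neq\beta'$, we conclude $\mathfrak{p}\mid\textrm{disc}(p_d)$ and hence $p\mid\textrm{disc}(p_d)$, as required.

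The decisive case is $\beta=\beta'$, in which $\eta,\eta'$ are distinct roots of the single cubic $x^3-\beta x+2$, so that this cubic has a repeated root mod $\mathfrak{p}$ and $\mathfrak{p}$ divides its discriminant $4(\beta^3-27)$; being odd, $\mathfrak{p}\mid(\beta^3-27)$. Here I would invoke Proposition~\ref{prop:4}, which yields $(\alpha^3-27)(\beta^3-27)=729$ as an identity of algebraic integers, so that the integral ideal $(\beta^3-27)$ divides $(3)^6$ and is supported only on primes above $3$. Thus $\beta=\beta'$ can occur only for $p=3$, and every odd prime $p\neq3$ dividing $\textrm{disc}(q)$ must arise from the previous paragraph, hence divides $\textrm{disc}(p_d)$. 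Confining the ``same cubic'' coincidences to the single split prime $3$ by means of Proposition~\ref{prop:4} is, I expect, the heart of the proof; the prime $3$ itself is genuinely special here (already for $d=11$ one has $3\mid\textrm{disc}(C_{11})$ but $3\nmid\textrm{disc}(p_{11})=-44$), and it sits entirely inside the factor $\beta^3-27\mid3^6$.

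For the prime $2$ under the hypothesis that the roots of $q$ are units, the displayed identity for $\beta-\beta'$ remains valid at a prime $\mathfrak{p}\mid2$, since $\eta\eta'$ is again a unit; so coincidences with $\beta\neq\beta'$ again give $2\mid\textrm{disc}(p_d)$. The one remaining source of $2$'s is a coincidence within a single cubic, and this is exactly what the unit hypothesis controls: the three roots of each cubic multiply to $-2$, so they cannot all be units, and the two $2$-power contributions to $\textrm{disc}(C_d)$ that are foreign to $\textrm{disc}(p_d)$ --- the factor $4(\beta_i^3-27)$ in each cubic discriminant and the product-of-roots factor $-2$ in each resultant $\textrm{Res}(x^3-\beta_i x+2,\,x^3-\beta_j x+2)=-2(\beta_i-\beta_j)^3$ --- are both tied to the non-unit root of the cubic. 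Carrying out this $2$-adic bookkeeping precisely, showing that when the roots of $q$ are units these extraneous factors disappear from $\textrm{disc}(q)$ and only the $\textrm{disc}(p_d)$-part survives, is the main technical obstacle in the second assertion.
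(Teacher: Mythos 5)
Your division into the cases $\beta \neq \beta'$ and $\beta = \beta'$ is exactly the right frame, and it uncovers a genuine defect: your first case \emph{is} the paper's entire proof (the same identity $\beta_1-\beta_2 = (\gamma_1-\gamma_2)\bigl((\gamma_1+\gamma_2)-\tfrac{2}{\gamma_1\gamma_2}\bigr)$, the same integrality of $2/\gamma_i$), but the paper then concludes ``thus $\mathfrak{l}$ divides $\mathrm{disc}(p_d(x))$'' without noticing that this inference is vacuous when the two congruent roots lie on the same cubic, i.e.\ when $\beta_1 = \beta_2$. Your treatment of that second case at odd primes is correct: the cubic's discriminant is $4(\beta^3-27)$, and $(\beta^3-27)$ is supported on primes above $3$ by the Fermat relation $(\alpha^3-27)(\beta^3-27) = 729$; the only point to add is that $\alpha^3(w/3)$ is an algebraic integer, which follows from (\ref{eqn:3.8}) since $j(w/3)$ is one, so the relation really is an identity of algebraic integers and the ideal $(\beta^3-27)$ divides $(3)^6$. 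Moreover your parenthetical about $d=11$ is not a side remark but the crux: since $C_{11}$ is irreducible with $\mathrm{disc}(C_{11}) = -2^{12}3^6 11^3$ (Table \ref{tab:2}) while $\mathrm{disc}(p_{11}) = -44$, the Proposition as printed is \emph{false} at $p = 3$. Your restricted conclusion --- odd primes $p \neq 3$ --- is the strongest true statement, your proof of it is complete, and the paper's own proof fails precisely in the case you isolated.

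For the prime $2$, however, the bookkeeping you defer cannot be carried out, because the second assertion fails for the same structural reason. Modulo any $\mathfrak{p} \mid 2$ one has $x^3 - \beta x + 2 \equiv x\,(x+\sqrt{\beta}\,)^2 \pmod{\mathfrak{p}}$, so whenever a cubic has two unit roots they are congruent mod $\mathfrak{p}$; hence any irreducible factor $q$ containing \emph{both} unit roots of some cubic automatically has $2 \mid \mathrm{disc}(q)$, with no implication for $p_d$. This situation occurs: for $d = 92$ (Example 4), $C_{92} = q_4 q_5$ with $q_5(0) = 1$, so all twelve roots of $q_5$ are units (Proposition \ref{prop:17}), and since no cubic can contribute three unit roots (their product is $-2$), $q_5$ contains exactly the two unit roots of each of the six cubics. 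Concretely, $q_5 \equiv (x^6+x^5+x^4+x^3+x^2+x+1)^2 \pmod 2$, so $2 \mid \mathrm{disc}(q_5)$, whereas $p_{92} \equiv x^6+x^5+x^4+x^3+x^2+x+1 \pmod 2$ is separable, so $\mathrm{disc}(p_{92})$ is odd. The unit hypothesis rescues the claim only when $q$ contains at most one root of each cubic (the case $-d \equiv 1 \pmod 8$, where $C_d$ splits into three factors of degree $2h(-d)$), and there your first-case computation already suffices. So the honest outcome of your analysis is a repaired statement --- odd primes $\neq 3$ unconditionally, and $p = 2$ for unit roots when $-d \equiv 1 \pmod 8$ --- rather than a completed proof of the printed one.
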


\begin{proof}
If the odd prime $l$ divides $\textrm{disc}(q(x))$, let $\mathfrak{l}$ be a prime ideal divisor of $l$ in $\Omega_{2f}$ over $K = \mathbb{Q}(\sqrt{-d})$ and $\gamma_1$ and $\gamma_2$ be distinct roots of $q(x)$ for which $\mathfrak{l} \mid (\gamma_1-\gamma_2)$.  Also, let $\beta_i = \frac{\gamma_i^3+2}{\gamma_i}$ be the corresponding roots of $p_d(x)$.  Considering the equation
\begin{align*}
\beta_1-\beta_2 & = \frac{\gamma_1^3+2}{\gamma_1}-\frac{\gamma_2^3+2}{\gamma_2}\\
& = \frac{\gamma_1^3 \gamma_2-\gamma_2^3 \gamma_1+2(\gamma_2-\gamma_1)}{\gamma_1 \gamma_2}\\
& = (\gamma_1-\gamma_2)\left((\gamma_1+\gamma_2)-\frac{2}{\gamma_1 \gamma_2}\right)
\end{align*}
shows that $\mathfrak{l} \mid (\beta_1-\beta_2)$, since the term $\displaystyle \frac{2}{\gamma_1 \gamma_2} = \frac{1}{2} \frac{2}{\gamma_1} \frac{2}{\gamma_2}$ is integral for $\mathfrak{l}$.  Thus $\mathfrak{l}$ divides $\textrm{disc}(p_d(x))$, which implies that $l \mid \textrm{disc}(p_d(x))$.  If $l = 2$ and the roots $\gamma_1$ and $\gamma_2$ are units, then the same assertion holds, since in that case $\displaystyle \frac{2}{\gamma_1 \gamma_2} = \frac{2}{\gamma_1} \frac{1}{\gamma_2}$ is an algebraic integer.
\end{proof}

\section{A $3$-adic function.}

In this section we show that the expressions $P_n(x)$ defined in Section 4.2 are polynomials.  The discussion is similar to the discussion in \cite[Section 3]{mn}.

\begin{prop} We have the congruences
\begin{align*}
R^{(n)}(x,x_n) & \equiv (-1)^{n-1}(x^{3^n} -x_n)(x_n -1)^{3^n-1} \ (\textrm{mod} \ 3);\\
R_n(x) & \equiv (-1)^{n-1}(x^{3^n} -x)(x -1)^{3^n-1} \ (\textrm{mod} \ 3).
\end{align*}
\label{prop:13}
\end{prop}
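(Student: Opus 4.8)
The plan is to prove the first congruence by induction on $n$ and then obtain the second by the specialization $x_n = x$. The starting observation is that modulo $3$ the defining polynomial factors completely: since $-2\equiv 1$ and $4\equiv 1\pmod 3$ we have $y^2-2y+4\equiv y^2+y+1\equiv (y-1)^2$, so
$$f(x,y)\equiv (x^3-y)(y-1)^2\pmod 3.$$
This immediately yields the base case, because $R^{(1)}(x,x_1)=f(x,x_1)\equiv (x^3-x_1)(x_1-1)^2\pmod 3$, which is the claimed formula at $n=1$.

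Before any modular computation one must secure the only genuinely delicate point: reduction mod $3$ commutes with $\mathrm{Res}_{x_{n-1}}$ only when the $x_{n-1}$-degrees of both arguments survive reduction, i.e. their leading coefficients are nonzero mod $3$. I would therefore first establish, as a separate induction over $\mathbb{Z}$, that $R^{(n)}(x,x_n)$ has degree $3^n$ in $x_n$ with leading coefficient exactly $(-1)^n$. Writing the roots of $R^{(n-1)}(x,\cdot)$ in $x_{n-1}$ as $\alpha_1,\dots,\alpha_{3^{n-1}}$ and noting that $f(\alpha,x_n)=-x_n^3+(\alpha^3+2)x_n^2+(\alpha^3-4)x_n+\alpha^3$ has $x_n$-degree $3$ with leading coefficient $-1$, the product form of the resultant gives
$$R^{(n)}(x,x_n)=\mathrm{lc}_{x_{n-1}}\!\big(R^{(n-1)}\big)^{3}\prod_{i} f(\alpha_i,x_n),$$
whose $x_n$-degree is $3\cdot 3^{n-1}=3^n$ and whose leading coefficient is $\big((-1)^{n-1}\big)^3\cdot(-1)^{3^{n-1}}=(-1)^n$. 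In particular $R^{(n-1)}$ has $x_{n-1}$-degree $3^{n-1}$ with the unit leading coefficient $(-1)^{n-1}$, while $f(x_{n-1},x_n)$ has $x_{n-1}$-leading coefficient $x_n^2+x_n+1\equiv (x_n-1)^2\not\equiv 0\pmod 3$; hence both degrees persist and reduction mod $3$ legitimately commutes with the resultant.

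With this in hand the inductive step for the congruence is a direct computation over $\mathbb{F}_3$. Assuming $R^{(n-1)}(x,x_{n-1})\equiv (-1)^{n-2}(x^{3^{n-1}}-x_{n-1})(x_{n-1}-1)^{3^{n-1}-1}$, the reduced polynomial $\bar P$ splits over $\mathbb{F}_3(x)$ with the simple root $x_{n-1}=x^{3^{n-1}}$ and the root $x_{n-1}=1$ of multiplicity $3^{n-1}-1$, and leading coefficient $(-1)^{n-1}$. Writing $\bar Q=f(x_{n-1},x_n)\equiv (x_{n-1}^3-x_n)(x_n-1)^2$ and applying the product formula
$$\mathrm{Res}_{x_{n-1}}(\bar P,\bar Q)=\mathrm{lc}(\bar P)^{3}\,\bar Q\big(x^{3^{n-1}}\big)\,\bar Q(1)^{3^{n-1}-1},$$
I evaluate $\bar Q(x^{3^{n-1}})=(x^{3^n}-x_n)(x_n-1)^2$ and $\bar Q(1)=-(x_n-1)^3$. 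Since $3^{n-1}-1$ is even the stray sign from $\big(-(x_n-1)^3\big)^{3^{n-1}-1}$ disappears, and collecting powers of $(x_n-1)$ via $2+3(3^{n-1}-1)=3^n-1$ produces exactly $(-1)^{n-1}(x^{3^n}-x_n)(x_n-1)^{3^n-1}$. This completes the induction, and setting $x_n=x$ gives the stated formula for $R_n(x)$.

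The main obstacle is precisely the degree-preservation bookkeeping of the second paragraph: if an $x_{n-1}$-leading coefficient were divisible by $3$, the $x_{n-1}$-degree would drop after reduction and the Sylvester-matrix resultant of the reduced polynomials would no longer equal the reduction of the true resultant, introducing spurious factors. Once the factorization of $f$ mod $3$ and the product formula are in place, every remaining evaluation is routine.
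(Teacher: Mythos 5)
Your proof is correct, and its skeleton --- induction on $n$, the base case $f(x,y)\equiv(x^3-y)(y-1)^2 \ (\textrm{mod} \ 3)$, and the product formula for the resultant in the inductive step --- is the same as the paper's. You execute it differently in two respects, both worth noting. First, the paper passes from $\textrm{Res}_{x_{n-1}}(R^{(n-1)},f)$ taken over $\mathbb{Z}$ to the resultant of the mod-$3$ reductions without comment; you correctly identify that this commutation needs the $x_{n-1}$-degrees to survive reduction, and your auxiliary induction showing $\deg_{x_n}R^{(n)}=3^n$ with leading coefficient exactly $(-1)^n$ (a unit, so no degree drop is possible) supplies precisely the justification the paper leaves implicit; this integral leading-coefficient fact appears nowhere in the paper's argument for this proposition. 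Second, you apply the product formula over the roots of the reduced $R^{(n-1)}$, which are the rational points $x_{n-1}=x^{3^{n-1}}$ (simple) and $x_{n-1}=1$ (multiplicity $3^{n-1}-1$), whereas the paper swaps the arguments and evaluates over the roots $\omega^i\sqrt[3]{x_n}$ of the cubic factor; your route is cleaner in that no cube roots or roots of unity appear, which is a real advantage since in characteristic $3$ the three roots $\omega^i\sqrt[3]{x_n}$ collapse to a single inseparable root and the paper's computation is best read as a characteristic-zero computation reduced at the end. Your sign and exponent bookkeeping ($\textrm{lc}(\bar P)^3=(-1)^{n-1}$, $(-1)^{3^{n-1}-1}=1$, and $2+3(3^{n-1}-1)=3^n-1$) is accurate, and the specialization $x_n=x$ then yields the second congruence exactly as in the paper.
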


\begin{proof}
For $n = 1$ we have, with $f(x,y)=x^3(y^2+y+1)-y(y^2-2y+4)$, that
\begin{align*}
R^{(1)}(x,x_1)=f(x,x_1)&=x^3(x_1^2+x_1+1)-x_1(x_1^2-2x_1+4)\\
 & \equiv x^3(x_1^2-2x_1+1)-x_1(x_1^2-2x_1+1)\ (\textrm{mod}\ 3)\\
 & \equiv (x^3-x_1)\,(x_1-1)^2\ (\textrm{mod}\ 3).
\end{align*}
Hence,
$$R_1(x) \equiv (x^3-x)\,(x-1)^2\ (\textrm{mod}\ 3).$$
Now for the induction step, assume the result is true for $n-1$. Then,
\begin{align*}
&R^{(n)}(x,x_n) = \textrm{Res}_{x_{n-1}}(R^{(n-1)}(x,x_{n-1}),f(x_{n-1},x_n))\\
&\equiv\textrm{Res}_{x_{n-1}}\big((-1)^n(x^{3^{n-1}} -x_{n-1})(x_{n-1} -1)^{3^{n-1}-1},(x_{n-1}^3-x_n)(x_n-1)^2\big)\ (\textrm{mod}\ 3).
\end{align*}
Recall that the resultant of two polynomials $\displaystyle f=\sum_{i=0}^n{a_ix^i}$ and $\displaystyle g=\sum_{i=0}^m{b_ix^i}$, having roots $\alpha_1,\alpha_2,\ldots,\alpha_n$ and $\beta_1,\beta_2,\ldots,\beta_m$, respectively, is defined as
$$\textrm{Res}(f,g)=a_n^m\prod_{i=1}^n{g(\alpha_i)},$$
and
$$\textrm{Res}(g,f)=(-1)^{mn}\textrm{Res}(f,g).$$
The roots of $(x_{n-1}^3-x_n)\,(x_n-1)^2$, as a polynomial in $x_{n-1}$, are $\omega^i\sqrt[3]{x_n}$ for $i=0,1,2$, where $\omega=e^{2\pi i/3}$. Hence,
\begin{align*}
\textrm{Res}_{x_{n-1}}&((-1)^n(x^{3^{n-1}} -x_{n-1})(x_{n-1} - 1)^{3^{n-1}-1},(x_{n-1}^3-x_n)(x_n-1)^2)\\
&=(-1)^{3^{n-1}\cdot3}\big(x_n-1\big)^{2\cdot3^{n-1}} (-1)^n\big(x^{3^{n-1}}-\sqrt[3]{x_n}\big)\big(\sqrt[3]{x_n}-1\big)^{3^{n-1}-1}\\
& \ \ \ \times(-1)^n\big(x^{3^{n-1}}-\omega\sqrt[3]{x_n}\big) \big(\omega\sqrt[3]{x_n}-1\big)^{3^{n-1}-1}\\
& \ \ \ \times(-1)^n\big(x^{3^{n-1}}-\omega^2\sqrt[3]{x_n}\big) \big(\omega^2\sqrt[3]{x_n}-1\big)^{3^{n-1}-1}\\
&=(-1)^{3^{n}}(-1)^{3n}\big(x_n-1\big)^{2\cdot3^{n-1}} \big(x^{3^n}-x_n\big) \big(x_n-1\big)^{3^{n-1}-1}\\
&=(-1)^{n-1}\big(x^{3^n}-x_n\big) \big(x_n-1\big)^{3^n-1}.
\end{align*}
Hence, we obtain
\begin{align*}
R^{(n)}(x,x_n) & \equiv (-1)^{n-1}(x^{3^n} -x_n)(x_n -1)^{3^n-1} \ (\textrm{mod} \ 3),\\
R_n(x) & \equiv (-1)^{n-1}(x^{3^n} -x)(x -1)^{3^n-1} \ (\textrm{mod} \ 3),
\end{align*}
completing the induction.
\end{proof}

\begin{lem} The polynomial $R^{(n)}(x,x_n)$ has the form
$$R^{(n)}(x,x_n) = (-1)^{n-1}(x_n^2+x_n+1)(x_n-1)^{3^n-3}x^{3^n}+S_n(x,x_n),$$
where
\begin{align*}
\textrm{deg}_x \ S_n(x,x_n) &\le 3^n-3,\\
\textrm{deg}_{x_n} \ S_n(x,x_n) &= 3^n.
\end{align*}
\label{lem:3}
\end{lem}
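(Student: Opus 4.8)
The plan is to prove the lemma by induction on $n$, running the same induction that underlies Proposition \ref{prop:13} but now tracking the two top coefficients in $x$ and the exact degree in $x_n$. The base case $n=1$ is immediate: since $R^{(1)}(x,x_1)=f(x,x_1)=(x_1^2+x_1+1)x^3-x_1(x_1^2-2x_1+4)$, the coefficient of $x^{3}$ is $(x_1^2+x_1+1)(x_1-1)^{3-3}$ and $S_1=-x_1(x_1^2-2x_1+4)$ satisfies $\deg_x S_1=0=3^1-3$ and $\deg_{x_1}S_1=3=3^1$. For the inductive step I assume the statement for $n-1$ and write $m=3^{n-1}$. In particular $\deg_{x_{n-1}}R^{(n-1)}=m$ (the leading-in-$x$ coefficient has $x_{n-1}$-degree $m-1$, while $\deg_{x_{n-1}}S_{n-1}=m$), and, crucially, viewed as a polynomial in $x$ the factor $R^{(n-1)}(x,x_{n-1})$ has a gap: its coefficients of $x^{m-1}$ and $x^{m-2}$ vanish, because $\deg_x S_{n-1}\le m-3$.

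For the behavior in $x$ I use the root expansion of the resultant over the roots of $f$. Let $\rho_0,\rho_1,\rho_2$ be the roots in $x_{n-1}$ of $f(x_{n-1},x_n)=(x_n^2+x_n+1)x_{n-1}^3-x_n(x_n^2-2x_n+4)$, so that $e_1=e_2=0$ and $\rho_i^3=c:=x_n(x_n^2-2x_n+4)/(x_n^2+x_n+1)$ for each $i$. Then
\[
R^{(n)}(x,x_n)=(-1)^{3m}(x_n^2+x_n+1)^{m}\prod_{i=0}^{2}R^{(n-1)}(x,\rho_i).
\]
By the inductive gap, each factor is $a_i x^{m}+r_i(x)$ with $a_i=(-1)^{n-2}(\rho_i^2+\rho_i+1)(\rho_i-1)^{m-3}$ and $\deg_x r_i\le m-3$; hence no term of the product can have $x$-degree $3m-1$ or $3m-2$, which already gives $R^{(n)}=[\,\mathrm{const}\cdot a_0a_1a_2\,]x^{3^n}+S_n$ with $\deg_x S_n\le 3^n-3$. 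To evaluate $a_0a_1a_2$ I compute two symmetric products over the $\rho_i$. From $\prod_i(x_{n-1}-\rho_i)=f(x_{n-1},x_n)/(x_n^2+x_n+1)$ and the factorization $f(1,x_n)=-(x_n-1)^3$ I get $\prod_i(\rho_i-1)=(x_n-1)^3/(x_n^2+x_n+1)=c-1$; and since $\rho_i^2+\rho_i+1=(\rho_i^3-1)/(\rho_i-1)=(c-1)/(\rho_i-1)$, I obtain $\prod_i(\rho_i^2+\rho_i+1)=(c-1)^2$. Substituting these, collecting the powers of $(x_n^2+x_n+1)$ (which reduce to the first power) and of $(x_n-1)$ (which build up to $3^n-3$), and combining the signs $(-1)^{3m}(-1)^{3(n-2)}=(-1)^{n-1}$, yields the coefficient of $x^{3^n}$ exactly as $(-1)^{n-1}(x_n^2+x_n+1)(x_n-1)^{3^n-3}$.

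It remains to verify $\deg_{x_n}S_n=3^n$, for which I switch to the other resultant expansion, over the roots of $R^{(n-1)}$. Writing $\sigma_1,\dots,\sigma_m$ for the $m=3^{n-1}$ roots in $x_{n-1}$ of $R^{(n-1)}(x,\cdot)$, one has $R^{(n)}(x,x_n)=\mathrm{lc}_{x_{n-1}}(R^{(n-1)})^{3}\prod_{j=1}^{m}f(\sigma_j,x_n)$. Each factor $f(\sigma_j,x_n)=-x_n^3+(\sigma_j^3+2)x_n^2+(\sigma_j^3-4)x_n+\sigma_j^3$ has $x_n$-degree $3$ with leading coefficient $-1$ independent of $j$, so the product has $x_n$-degree $3^n$ and nonzero leading coefficient $(-1)^{m}$, while $\mathrm{lc}_{x_{n-1}}(R^{(n-1)})$ is a nonzero polynomial in $x$ alone. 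Hence $\deg_{x_n}R^{(n)}=3^n$. Since the leading-in-$x$ term computed above has $x_n$-degree $2+(3^n-3)=3^n-1<3^n$, the $x_n$-degree $3^n$ must be carried by $S_n$, giving $\deg_{x_n}S_n=3^n$ and completing the induction.

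The main obstacle is the closed-form evaluation of the two symmetric products $\prod_i(\rho_i-1)$ and $\prod_i(\rho_i^2+\rho_i+1)$: the clean outcome hinges on $\rho_i^3$ being constant in $i$ together with the identity $f(1,x_n)=-(x_n-1)^3$, and on recognizing that the two degree statements require two inequivalent root expansions of the resultant — over the roots of $f$ for the $x$-behavior and over the roots of $R^{(n-1)}$ for the $x_n$-behavior. The accompanying sign and exponent bookkeeping is routine but must be tracked with care.
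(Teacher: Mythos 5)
Your proof is correct, and its core is the same induction as the paper's: expand the resultant over the three roots $\rho_i$ of $f(x_{n-1},x_n)$ in $x_{n-1}$, use the gap $\deg_x S_{n-1}\le 3^{n-1}-3$ to isolate the coefficient of $x^{3^n}$, and evaluate the symmetric products $\prod_i(\rho_i-1)$ and $\prod_i(\rho_i^2+\rho_i+1)$ using the fact that $\rho_i^3$ is independent of $i$; this reproduces the paper's leading coefficient $(-1)^{n-1}(x_n^2+x_n+1)(x_n-1)^{3^n-3}$ and the bound $\deg_x S_n\le 3^n-3$ exactly as in the paper (which phrases the same computation via $\rho_i^3-1=(x_n-1)^3/(x_n^2+x_n+1)$ and a redistribution of the factor $(x_n^2+x_n+1)^{3^{n-1}}$). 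Where you genuinely depart from the paper is the claim $\deg_{x_n}S_n=3^n$: the paper stays inside the same expansion and argues that the top $x_n$-degree term of the remainder comes from the product of the terms $(x_n^2+x_n+1)^{3^{n-2}}S_{n-1}(x,\rho_i)$, whose leading part is $(x_n^3-2x_n^2+4x_n)^{3^{n-1}}$ --- an argument that requires tracking which cross-terms in the expanded product can reach degree $3^n$ and why no cancellation occurs. You instead invoke the dual root-product formula for the resultant, over the roots $\sigma_j$ of $R^{(n-1)}(x,\cdot)$, so that $R^{(n)}=\mathrm{lc}_{x_{n-1}}(R^{(n-1)})^{3}\prod_{j}f(\sigma_j,x_n)$ visibly has exact $x_n$-degree $3\cdot 3^{n-1}=3^n$ with nonvanishing leading coefficient $(-1)^{3^{n-1}}\mathrm{lc}_{x_{n-1}}(R^{(n-1)})^{3}$; since the isolated leading term has $x_n$-degree $3^n-1$, the degree $3^n$ must be carried by $S_n$. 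Your version of this last step is cleaner and arguably more airtight: it yields the exact $x_n$-degree and leading coefficient of $R^{(n)}$ with no cross-term bookkeeping, at the modest cost of appealing to both expansions of the resultant rather than one.
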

\begin{proof}
We prove this by induction.  The statement is clear for $n=1$, since $R^{(1)}(x,x_1) = f(x,x_1) = x^3(x_1^2+x_1+1)-(x_1^3-2x_1^2+4x_1)$, with $S_1(x,x_1) = -(x_1^3-2x_1^2+4x_1)$.  Proceeding by induction from $n-1$ to $n$, we have
\begin{align*}
R^{(n)}(x,x_n) &= \textrm{Res}_{x_{n-1}}(R^{(n-1)}(x,x_{n-1}),f(x_{n-1},x_n)) \\
& = -\textrm{Res}_{x_{n-1}}(f(x_{n-1},x_n), R^{(n-1)}(x,x_{n-1}))\\
& = -(x_n^2+x_n+1)^{3^{n-1}} \prod_i{R^{(n-1)}(x,\rho_i)},
\end{align*}
where $\rho_i, \ 1 \le i \le 3$, are the roots of $f(x_{n-1},x_n) = 0$, as a polynomial in $x_{n-1}$.
Thus, for $n \ge 2$,
\begin{align*}
R^{(n)}&(x,x_n) =  -(x_n^2+x_n+1)^{3^{n-1}} \\
\times &\prod_i{\left((-1)^{n}(\rho_i^2+\rho_i+1)(\rho_i-1)^{3^{n-1}-3}x^{3^{n-1}}+S_{n-1}(x,\rho_i)\right)}\\
=&-(x_n^2+x_n+1)^{3^{n-1}} \prod_i{\left((-1)^{n}(\rho_i^3-1)(\rho_i-1)^{3^{n-1}-4}x^{3^{n-1}}+S_{n-1}(x,\rho_i)\right)}.
\end{align*}
Now,
$$\rho_i^3 -1= \frac{x_n^3-2x_n^2+4x_n}{x_n^2+x_n+1}-1 = \frac{(x_n-1)^3}{x_n^2+x_n+1}.$$
Distributing $(x_n^2+x_n+1)^{3^{n-2}}$ over each term inside the product gives $R^{(n)}(x,x_n) = (-1)^{n-1} \Pi$, where $\Pi$ is the product
\begin{equation*}
\prod_i{\left((x_n^2+x_n+1)^{3^{n-2}}(\rho_i^3-1)(\rho_i-1)^{3^{n-1}-4}x^{3^{n-1}}
+(x_n^2+x_n+1)^{3^{n-2}}S_{n-1}(x,\rho_i)\right)}.
\end{equation*}
Thus, singling out the leading term leads to
\begin{align*}
\Pi &= \prod_i{(x_n^2+x_n+1)^{3^{n-2}}(\rho_i^3-1)(\rho_i-1)^{3^{n-1}-4}x^{3^{n-1}}} + \textrm{other terms}\\
& = (x_n^2+x_n+1)^{3^{n-1}}(\rho_i^3-1)^3 (\rho_i^3-1)^{3^{n-1}-4}x^{3^n} + T\\
& = (x_n^2+x_n+1)^{3^{n-1}}\left(\frac{(x_n-1)^3}{x_n^2+x_n+1}\right)^3 \left(\frac{(x_n-1)^3}{x_n^2+x_n+1}\right)^{3^{n-1}-4}x^{3^n} + T\\
& =  (x_n^2+x_n+1)(x_n-1)^{3^n-3}x^{3^n} + T,
\end{align*}
where $T$ is sum of the remaining terms.  This gives that
$$R^{(n)}(x,x_n) =(-1)^{n-1}(x_n^2+x_n+1)(x_n-1)^{3^n-3}x^{3^n}+(-1)^{n-1}T.$$
It remains to show that $S_n(x,x_n) = (-1)^{n-1}T$ satisfies the degree assertions of the lemma for $n$.  First, the degree in $x$ of $T$ is at most equal to the degree of
$$x^{3^{n-1}}x^{3^{n-1}}x^{3^{n-1}-3} = x^{3^n-3},$$
since the first summand in the individual factors of $\Pi$ has the largest degree in $x$.  Secondly, since $x_{n-1}^{3^{n-1}}$ occurs with a nonzero coefficient in $S_{n-1}(x,x_{n-1})$ and all the other terms have degree less than $3^{n-1}$, the largest degree term in $T$ comes from the product of the terms $(x_n^2+x_n+1)^{3^{n-2}}S_{n-1}(x,\rho_i)$, whose leading term is
\begin{align*}
(x_n^2+x_n+1)^{3^{n-1}}& (\rho_1 \rho_2 \rho_3)^{3^{n-1}} = (x_n^2+x_n+1)^{3^{n-1}}(\rho_1^3)^{3^{n-1}}\\
& = (x_n^2+x_n+1)^{3^{n-1}}\left(\frac{x_n^3-2x_n^2+4x_n}{x_n^2+x_n+1}\right)^{3^{n-1}}\\
& = (x_n^3-2x_n^2+4x_n)^{3^{n-1}}.
\end{align*}
Hence, $\textrm{deg}_{x_n} S_n(x,x_n) = 3^n$.  This completes the proof.
\end{proof}

\noindent {\bf Corollary.} a) {\it The degree of $R_n(x)$ is $\textrm{deg}(R_n(x)) = 2\cdot3^n-1$.} \smallskip

\noindent b) {\it The sum of the periodic points of $\mathfrak{f}(z)$ whose periods divide $n$ is $3^n-4$, for $n \ge 1$.}

\begin{proof} Part a) follows immediately from Lemma \ref{lem:3} on setting $x_n = x$.  For part b), Lemma \ref{lem:3} shows that $\textrm{deg}_x S_n(x,x) \le 2 \cdot 3^n - 3$.  This shows that the coefficient of the term $x^{2 \cdot 3^n-2}$ in $(-1)^{n-1}R_n(x)$ comes from the leading term $(x^2+x+1)(x_n-1)^{3^n-3}x^{3^n}$.  The sum of the roots of this polynomial is
clearly $3^n-3-1 = 3^n-4$.
\end{proof}

For the discussion to follow, we denote by $\textsf{K}_3$ the maximal unramified, algebraic extension of the $3$-adic field $\mathbb{Q}_3$, and $\mathfrak{o}$ its valuation ring.

\begin{prop} For $n \ge 1$, the polynomial $R_n(x)$ has distinct roots.  Thus, the expression
$$P_n(x) = \prod_{k \mid n}{R_k(x)^{\mu(n/k)}}$$
is a polynomial with distinct roots whose degree is given by
$$\textrm{deg}(P_n(x)) = 2\sum_{k \mid n}{\mu(n/k)3^k}, \ \ n \ge 2.$$
\label{prop:14}
\end{prop}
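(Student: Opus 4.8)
The plan is to establish three things in sequence: that $R_n(x)$ is separable, that $R_k(x)\mid R_n(x)$ whenever $k\mid n$ (so that the M\"obius product $P_n(x)$ is a genuine polynomial), and finally the degree formula. Throughout I would fix a prime divisor $\mathfrak{p}$ of $\wp_3$ in $\overline{\mathbb{Q}}$ and use the two descriptions of the roots of $R_n(x)$ already in hand: the reduction $R_n(x)\equiv(-1)^{n-1}(x^{3^n}-x)(x-1)^{3^n-1}\pmod 3$ of Proposition \ref{prop:13}, and the fact from Section 4.2 that the roots of $R_n(x)$ are exactly the periodic points of $\mathfrak{f}(z)$ of period dividing $n$.

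First I would treat the roots $\not\equiv 1\pmod{\mathfrak{p}}$. Since $x^{3^n}-x=\prod_{a\in\mathbb{F}_{3^n}}(x-a)$ is separable, each $\bar{a}\in\mathbb{F}_{3^n}\setminus\{1\}$ is a simple root of $R_n(x)\bmod 3$, and by Hensel's lemma lifts to a unique simple root $a$ of $R_n(x)$ in the valuation ring $\mathfrak{o}$ of $\textsf{K}_3$, with $a\not\equiv 1\pmod{\mathfrak{p}}$. This yields exactly $3^n-1$ distinct simple roots, namely all those not congruent to $1$. The main obstacle is the cluster at $x=1$, where modulo $3$ the value $1$ is a root of multiplicity $3^n$, so separability there is invisible mod $3$. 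To handle it I would use the involution $B(x)=\frac{x+2}{x-1}$ and the functional equation (\ref{eqn:4.8}), $(x-1)^3(y-1)^3 f(B(x),B(y))=27\,f(y,x)$. Applying this along a cycle $f(x_1,x_2)=\cdots=f(x_n,x_1)=0$ (all of whose entries differ from $1$, since the only cycle meeting the fixed point $1$ is $\{1\}$ itself) shows that $B$ reverses the cycle, hence permutes the roots of $R_n(x)$. For a root $a$ from the previous step one computes $B(a)=1+\frac{3}{a-1}$; as $a\in\mathfrak{o}$ with $\bar{a}\neq 1$, the element $a-1$ is a unit at $\mathfrak{p}$, so $B(a)\equiv 1\pmod{\mathfrak{p}}$. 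Thus the $3^n-1$ values $B(a)$ are roots of $R_n(x)$, all congruent to $1$, pairwise distinct (as $B$ is injective), and different from $1$ (since $B(x)=1$ has no finite solution).

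Adjoining the fixed point $x=1$ itself (a root because $f(1,1)=0$), these steps exhibit $(3^n-1)+(3^n-1)+1=2\cdot 3^n-1$ pairwise distinct roots of $R_n(x)$. By the Corollary to Lemma \ref{lem:3} this number equals $\deg R_n(x)$, so every root is simple and $R_n(x)$ is separable; the argument is uniform in $n$, so $R_m(x)$ is separable for every $m\ge 1$.

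For the product, $k\mid n$ forces every root of $R_k$ (period dividing $k$) to be a root of $R_n$ (period dividing $n$), and since $R_k$ is separable this gives $R_k(x)\mid R_n(x)$. Working with orders at each $\alpha\in\overline{\mathbb{Q}}$, M\"obius inversion then shows each periodic point of minimal period $d$ occurs in $P_n(x)=\prod_{k\mid n}R_k(x)^{\mu(n/k)}$ with multiplicity $\sum_{d\mid k\mid n}\mu(n/k)=[\,d=n\,]$, which is nonnegative everywhere; hence $P_n(x)$ has no finite poles and is a separable polynomial, its roots being precisely the points of exact period $n$. Finally $\deg P_n(x)=\sum_{k\mid n}\mu(n/k)(2\cdot 3^k-1)=2\sum_{k\mid n}\mu(n/k)\,3^k$, using $\sum_{k\mid n}\mu(n/k)=0$ for $n\ge 2$. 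I expect Step 2, the transport of the degenerate cluster at $x=1$ to the well-separated roots $\not\equiv 1$ via the involution $B$, to be the crux of the argument.
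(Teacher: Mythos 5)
Your proof is correct and follows essentially the same route as the paper's: Hensel-lift the simple roots of the mod-$3$ reduction from Proposition \ref{prop:13}, transport them via the involution $B(x)=\frac{x+2}{x-1}$ and equation (\ref{eqn:4.8}) into a disjoint set of roots congruent to $1$, and compare the resulting count $2\cdot 3^n-1$ with the degree given by the Corollary to Lemma \ref{lem:3}. The only differences are bookkeeping: you absorb four of the five fixed points into the Hensel-lift and $B$-image sets rather than adjoining all of $R_1$'s roots separately, and you spell out the divisibility $R_k(x)\mid R_n(x)$ and the M\"obius multiplicity computation that the paper leaves implicit.
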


\begin{proof}
The congruence for $R_n(x)$ in Proposition \ref{prop:13} shows that the roots of $x^{3^n}-x$, except for $x=1$, are simple roots of $R_n(x)$ (mod $3$).  Hensel's Lemma implies that $R_n(x)$ is divisible over the $3$-adic field $\mathbb{Q}_3$ by $N_n$ irreducible factors of degree $n$ in $\mathbb{Z}_3[x]$, where $\mathbb{Z}_3$ is the ring of $3$-adic integers and
$$N_n = \frac{1}{n} \sum_{d \mid n}{\mu(n/d)3^d}, \ \ n \ge 2.$$
Since $R_k(x)$ divides $R_n(x)$ if $k \mid n$, this shows that $R_n(x)$ is divisible over $\mathbb{Q}_3$ by a product of $\sum_{k \mid n}{N_k}-3$ distinct, irreducible (and nonlinear) factors whose total degree is
$$\sum_{k \mid n}{\sum_{d \mid k}{\mu(k/d)3^d}} -3= \sum_{d \mid n}{3^d \sum_{e \mid n/d}{\mu(e)}} -3 = 3^n -3.$$
Let $q(x)$ denote one of these factors.  If $\eta$ is one of its roots in $\textsf{K}_3$, with period $k$, then there are $\eta_1, \dots, \eta_{k-1} \in \textsf{K}_3$ for which
$$f(\eta, \eta_1) = f(\eta_1, \eta_2) = \cdots = f(\eta_{k-1}, \eta) = 0.$$
The congruence $f(1,y) \equiv 2(y + 2)^3$ (mod $3$) shows that if some $\eta_i \equiv 1$ (mod $3$), then $\eta \equiv 1$ (mod $3$) as well, which is false, since its degree over $\mathbb{F}_3$ is greater than 1.  Now equation (\ref{eqn:4.8}) implies, with $B(x) = \frac{x+2}{x-1}$, the chain of equations
$$f(B(\eta_1), B(\eta)) = \cdots = f(B(\eta_{k-1}),B(\eta_{k-2})) = f(B(\eta),B(\eta_{k-1})) = 0.$$
Reading backwards, this shows that $B(\eta)$ is also periodic of period $k$.  Moreover $\eta_i \not \equiv 1$ (mod $3$) in $\textsf{K}_3$ and
$$B(\eta_i)=\frac{\eta_i+2}{\eta_i-1} \equiv \frac{\eta_i+2}{\eta_i+2} \equiv 1 \ (\textrm{mod} \ 3).$$
Thus, the orbit of $B(\eta)$ consists entirely of numbers which are congruent to $1$ (mod $3$), and is therefore disjoint from the set of roots enumerated above.  Since $B(x)$ is linear fractional with determinant $-3 \neq 0$, the mapping $\eta \rightarrow B(\eta)$ is $1-1$ on $\textsf{K}_3$.  This shows that $R_n(x)$ has an additional set of $3^n-3$ distinct roots, all of which are $1$ (mod $3$).  Hence, $R_n(x)$ has at least $2(3^n-3)+5 = 2 \cdot 3^n-1$ distinct roots, when we include the factors of $R_1(x) = x(x-1)(x+2)(x^2+2)$.  But then the Corollary to Lemma \ref{lem:3} shows that these are the only roots of $R_n(x)$.  Hence, the roots of $R_n(x)$ are distinct.  This implies easily that $P_n(x)$ is a polynomial with the given degree, using that
$$R_n(x) = \pm \prod_{d \mid n}{P_d(x)}.$$
\end{proof}

\noindent {\bf Corollary.} {\it For $n \ge 2$, the average of the periodic points of primitive period $n$ is $\displaystyle \frac{1}{2}$.}

\begin{proof} For $n \ge 2$, the sum of the periodic points of primitive period $n$ is
$$\sum_{k \mid n}{\mu(n/k) (3^k-4)} = \sum_{k \mid n}{\mu(n/k) 3^k},$$
by the Corollary to Lemma \ref{lem:3}.  This is exactly half of the number of periodic points of primitive period $n$, by Proposition \ref{prop:14}.
\end{proof}

The following lemma is similar to \cite[Lemma 4.1]{mc}.

\begin{lem} There is a unique solution $y \in \textsf{K}_3$ of the equation
$$f(x,y) = 0, \ \textrm{for} \ x \in \textsf{K}_3, \ x \not \equiv 1 \ (\textrm{mod} \ 3),$$
given by the convergent series
\begin{align}
\notag y = T(x) &= \frac{1}{3}(x^3 + 2) + \frac{1}{3} (x^3 + 8) \sum_{n=0}^\infty{\binom{1/3}{n} (-1)^n \frac{3^{2n}}{(x^3+8)^n}} \\
 \label{eqn:5.1} & + \frac{1}{3} (x^3 - 1)\sum_{n=0}^\infty{\binom{-1/3}{n} (-1)^n \frac{3^{2n}}{(x^3+8)^n}}, \ \ x \not \equiv 1 \ (\textrm{mod} \ 3).
\end{align}
\label{lem:4}
\end{lem}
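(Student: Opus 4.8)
The plan is to solve the cubic equation $f(x,y)=0$ for $y$ as a $3$-adic analytic function of $x$, and then verify both existence/uniqueness in $\textsf{K}_3$ and convergence of the proposed series. Writing out $f(x,y)=x^3(y^2+y+1)-y(y^2-2y+4)=0$ and collecting in powers of $y$, we get a monic cubic in $y$ (up to sign): $-y^3+(x^3+2)y^2+(x^3-4)y+x^3=0$, i.e. $y^3-(x^3+2)y^2-(x^3-4)y-x^3=0$. The strategy is to apply Cardan's formula to this cubic, identify the branch of the cube root that converges $3$-adically under the hypothesis $x\not\equiv 1\pmod 3$, and expand the resulting radicals as binomial series in powers of $3^2=9$ to land on the stated expression \eqref{eqn:5.1}.

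First I would establish uniqueness by a Hensel-type / reduction-mod-$3$ argument, which is the cleanest entry point. Reducing $f(x,y)$ modulo $3$ gives $f(x,y)\equiv (x^3-y)(y-1)^2\pmod 3$ (this is exactly the $n=1$ computation in the proof of Proposition \ref{prop:13}). Thus for fixed $x\in\mathfrak{o}$ with $x\not\equiv 1\pmod 3$, the reduced equation in $y$ has $y\equiv x^3$ as a simple root (the double root $y\equiv 1$ is excluded precisely because $x^3\not\equiv 1$, as $x\not\equiv 1$ forces $x^3\not\equiv 1$ in $\mathbb{F}_3$). Since this root is simple, Hensel's Lemma lifts it to a unique $y\in\mathfrak{o}\subset\textsf{K}_3$ with $y\equiv x^3\pmod 3$ solving $f(x,y)=0$; the other two roots reduce to $y\equiv 1\pmod 3$ and so are distinct from it. This disposes of uniqueness and shows the solution is integral with $T(x)\equiv x^3\pmod 3$, which I will want to cross-check against the series.

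Next I would produce the explicit series. The substitution $y=\frac{1}{3}(x^3+2)+z$ removes the quadratic term and turns the cubic into a depressed cubic $z^3+pz+q=0$; the discriminant-type quantity appearing under Cardan's square root should work out, after simplification, to involve the factor $x^3+8$, matching the shape of the explicit $\mathfrak{f}(z)$ formula stated after Theorem 1.2. Applying Cardan, $y$ becomes $\frac{1}{3}(x^3+2)$ plus a sum of two conjugate cube roots; rewriting each cube root as $(x^3+8)^{1/3}$ or $(x^3-1)\cdot(\text{cube root})^{-1}$ times $(1-9/(x^3+8))^{\pm 1/3}$, and expanding via the generalized binomial theorem
$$\left(1-\frac{9}{x^3+8}\right)^{\pm 1/3}=\sum_{n=0}^\infty \binom{\pm 1/3}{n}(-1)^n\frac{3^{2n}}{(x^3+8)^n},$$
gives precisely the two series in \eqref{eqn:5.1}. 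The key point is that $3^{2n}=9^n$ carries the convergence: I must show $x^3+8\equiv x^3-1$ is a $3$-adic unit, equivalently $x^3\not\equiv 1\pmod 3$, which is exactly the hypothesis $x\not\equiv 1\pmod 3$ (since $x\equiv 0,-1$ forces $x^3\equiv 0,-1$, never $1$). Then $|9/(x^3+8)|_3\le 1/9<1$, the binomial series converges in $\textsf{K}_3$, and the cube roots are well-defined once the correct unramified branch is fixed.

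\textbf{The main obstacle} will be the branch selection: Cardan's formula a priori produces three values according to the three cube roots of unity, and I must argue that exactly one choice of cube-root branch yields a value in $\textsf{K}_3$ (equivalently, reduces to $x^3\pmod 3$) and that this choice is the one making the binomial series converge. I expect to pin this down by matching the constant/leading behavior: evaluating the formal series at the level of residues should give $T(x)\equiv\frac{1}{3}(x^3+2)+\frac{1}{3}(x^3+8)+\frac{1}{3}(x^3-1)\equiv x^3\pmod{\text{higher powers of }3}$ after the leading binomial terms ($n=0$) contribute $\frac{1}{3}(x^3+8)+\frac{1}{3}(x^3-1)=\frac{1}{3}(2x^3+7)$, so that $T(x)=\frac{1}{3}(x^3+2)+\frac{1}{3}(2x^3+7)+O(9)=x^3+3+O(9)\equiv x^3\pmod 3$, agreeing with the Hensel root. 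This consistency check simultaneously verifies that the series solves $f(x,T(x))=0$ (it satisfies the same cubic and reduces to the same residue, hence equals the unique Hensel lift by the uniqueness already proved) and that the branch is the correct one, completing the proof without grinding through the full algebraic identity $f(x,T(x))=0$ term by term.
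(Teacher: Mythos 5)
Your proposal has two genuine gaps, both at points where the paper's proof does real work.

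First, the uniqueness you prove is weaker than what the lemma asserts. Hensel's lemma applied to the simple root of $f(x,y)\equiv(x^3-y)(y-1)^2\pmod 3$ produces a unique solution in $\textsf{K}_3$ \emph{that is congruent to} $x^3\pmod 3$; the lemma claims there is a unique solution in $\textsf{K}_3$ with no congruence restriction. Your remark that the other two roots ``reduce to $y\equiv 1\pmod 3$ and so are distinct from it'' does not dispose of uniqueness: a residue that is a double root modulo $3$ can perfectly well split into two distinct roots lying in the unramified world (for instance $y^2-2y-8=(y-4)(y+2)$ has both roots in $\mathbb{Z}_3$, both $\equiv 1\pmod 3$), so nothing you have said prevents the two remaining roots of the cubic from also lying in $\textsf{K}_3$. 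This is exactly the point the paper's argument handles: if $t,z\in\textsf{K}_3$ were two distinct solutions, elimination shows $z$ satisfies a quadratic over $\mathbb{Q}_3(t)$ with discriminant $-3(t+2)^2(t-1)^2$; after checking $t\neq 1,-2$ (either would force $x\equiv 1\pmod 3$), a second root in $\textsf{K}_3$ would put $\sqrt{-3}$ in $\textsf{K}_3$, contradicting that $\mathbb{Q}_3(\sqrt{-3})/\mathbb{Q}_3$ is ramified. Some such ramification argument is indispensable and is missing from your proof.

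Second, your convergence argument is invalid as stated. You infer convergence of the binomial series from $\bigl|9/(x^3+8)\bigr|_3\le 1/9<1$, but the coefficients $\binom{\pm 1/3}{n}$ are \emph{not} $3$-adic integers: $v_3\Bigl(\binom{\pm 1/3}{n}\Bigr)=-n-v_3(n!)$, which tends to $-\infty$ linearly in $n$, so smallness of the ratio alone proves nothing (were the coefficient valuations $-3n$, the series would diverge). The substance of the paper's convergence proof is precisely the estimate that rescues this: $v_3(n!)=(n-s_n)/2\le\lfloor n/2\rfloor$, where $s_n$ is the base-$3$ digit sum, hence $3^{n+\lfloor n/2\rfloor}\binom{\pm 1/3}{n}\in\mathbb{Z}_3$ and the $n$-th term of \eqref{eqn:5.1} has valuation at least $n-\lfloor n/2\rfloor\to\infty$. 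A smaller point in the same vein: the residue field of $\textsf{K}_3$ is $\overline{\mathbb{F}}_3$, not $\mathbb{F}_3$, so your case analysis ``$x\equiv 0,-1$'' does not exhaust the residues; the implication $x\not\equiv 1\Rightarrow x^3\not\equiv 1$ is still correct, but because $y^3-1=(y-1)^3$ in characteristic $3$. Aside from these gaps, your existence route (Cardan plus binomial expansion, with the branch pinned down by matching residues against the Hensel root) is essentially the paper's.
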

\begin{proof} Suppose that $t, z \in \textsf{K}_3$ are two solutions of $f(x,y) = 0$, for a given $x \not \equiv 1$ (mod $3$).  Then $f(x,t) = f(x,z) = 0$ implies that
$$0 = (z^2+z+1)f(x,t)-(t^2+t+1)f(x,z) = (z-t)((t^2 + t + 1)z^2 + (t^2 - 5t - 2)z + t^2 - 2t + 4).$$
If $z \neq t$, then $z$ is a root of the polynomial
$$q(X) = (t^2 + t + 1)X^2 + (t^2 - 5t - 2)X + t^2 - 2t + 4,$$
whose discriminant is
$$D = -3(t + 2)^2(t - 1)^2.$$
Note that $t \neq 1,-2$, since
$$f(x,1) = 3(x - 1)(x^2 + x + 1) \ \ \textrm{and} \ \ f(x,-2) = 3(x + 2)(x^2 - 2x + 4),$$
where the quadratics have no roots in $\textsf{K}_3$.  Thus $t = 1$ or $-2$ would imply $x = t$, contrary to our assumption on $x$.  It follows that $z$ lies in an extension of $\mathbb{Q}_3(t)$ whose discriminant is $-3$, and therefore in which $3$ is ramified.  This contradiction proves that $z= t$. \medskip

Now using the expression
\begin{equation*}
\mathfrak{f}(z) = \frac{1}{3} (z^3 + 8) \left(\frac{z^3-1}{z^3+8}\right)^{1/3} + \frac{1}{3}(z^3 - 1)\left(\frac{z^3-1}{z^3+8}\right)^{-1/3} + \frac{1}{3}(z^3 + 2)
\end{equation*}
and the fact that $\frac{z^3-1}{z^3+8} = 1-\frac{9}{z^3+8}$, the root $y=T(x)$ of $f(x,y) = 0$ has the $3$-adic expansion (\ref{eqn:5.1}).  To show convergence, we need to show that
\begin{equation}
3^{n+\lfloor n/2\rfloor}\binom{1/3}{n}, \ 3^{n+\lfloor n/2\rfloor}\binom{-1/3}{n} \in \mathbb{Z}_3,
\label{eqn:5.2}
\end{equation}
where $\mathbb{Z}_3$ is the ring of integers in $\mathbb{Q}_3$.  But
$$3^n \binom{1/3}{n} = (-1)^n \frac{(-1) 2 \cdot 5 \cdots (3n-4)}{n!},$$
and the power of $3$ dividing $n!$ is well-known to be
$$w_3(n!) = \frac{n - s_n}{2} \le \lfloor n/2 \rfloor,  \ \ n \ge 1,$$
where $s_n$ is the sum of the base-$3$ digits of $n$.  A similar argument for $\binom{-1/3}{n}$ proves (\ref{eqn:5.2}).  This shows that the power $3^{n-\lfloor n/2 \rfloor}$ divides the $n$-th terms of both series in (\ref{eqn:5.1}), and implies convergence, for any $x \in \textsf{K}_3$ not congruent to $1$ modulo $3$.
\end{proof}

By this proof, the terms in both infinite sums in (\ref{eqn:5.1}) are divisible by $9$, for $x \in \mathfrak{o}$ and $n \ge 4$.  This implies the congruence
\begin{align*}
T(x) & \equiv x^3 + 3 + \frac{1}{3} (x^3 + 8) \sum_{n=1}^3{\binom{1/3}{n} (-1)^n \frac{3^{2n}}{(x^3+8)^n}} \\
 & \ \ + \frac{1}{3} (x^3 - 1)\sum_{n=1}^3{\binom{-1/3}{n} (-1)^n \frac{3^{2n}}{(x^3+8)^n}} \\
 &\equiv x^3 + 3 -3\frac{2x^6 + 41x^3 + 326}{(x^3+8)^3} \\
 & \equiv x^3 \ (\textrm{mod} \ 3),
 \end{align*}
 for
 $$x \in \textsf{D} = \{x \in \mathfrak{o} \ | \ x \not \equiv 1 \ (\textrm{mod} \ 3)\}.$$
This shows that the function $T$ maps $\textsf{D}$ to itself and can be iterated on this set.  In other words, $T$ is a lift of the Frobenius automorphism on the set $\textsf{D}$. \medskip

Now let $\mathfrak{p}$ be a prime divisor of $\wp_3$ in $\Omega_{2f}$.  The field  $\Omega_{2f}$ is embedded in its completion  $\Omega_{2f, \mathfrak{p}}$, which can be viewed as a subfield of $\textsf{K}_3$, since $3$ is not ramified in $\Omega_{2f, \mathfrak{p}}/\mathbb{Q}_3$.  Recall that if $L$ is the decomposition field of $\mathfrak{p}$ in $\Omega_{2f}/K$ and $\mathfrak{p}_L$ is the prime divisor below $\mathfrak{p}$ in $L$, then
$$\textrm{Gal}(\Omega_{2f}/L) \cong \textrm{Gal}(\Omega_{2f, \mathfrak{p}}/L_{\mathfrak{p}_L}) = \textrm{Gal}(\Omega_{2f, \mathfrak{p}}/\mathbb{Q}_3).$$
(See \cite[pp. 51-55]{hak} or \cite[p. 170]{n}.)  Moreover, this group is cyclic, generated by $\tau_3$, which may be applied to elements of $\Omega_{2f, \mathfrak{p}}$.  Hence, considering the equation $f(\eta, \eta^{\tau_3}) = 0$ in $\Omega_{2f, \mathfrak{p}} \subset \textsf{K}_3$ and using Lemma \ref{lem:4} yields that
\begin{equation}
\eta^{\tau_3} = T(\eta), \ \ \eta = 2c(w/3).
\label{eqn:5.3}
\end{equation} 
For this note that $2c(w/3) \not \equiv 1$ (mod $3$) in $\textsf{K}_3$.  If this were not true, then $x = 2c(w/3)$ would satisfy
$$0 = x^3 - \beta(w/3)x+2 \equiv -\beta(w/3) \ (\textrm{mod} \ \mathfrak{p}),$$
which is not the case, since $\beta(w/3)$ is divisible by $\wp_3'$ but is relatively prime to $\wp_3$.  Applying $\tau_3$ to (\ref{eqn:5.3}) gives further that
$$\eta^{\tau_3^n} = T^n(\eta), \ \ n \ge 1.$$
Hence, the period of $\eta  = 2c(w/3)$ with respect to the $3$-adic function $T(x)$ is equal to the order of $\tau_3$ in  $\textrm{Gal}(\Omega_f/K)$.

\begin{prop}
The minimal period of $\eta  = 2c(w/3)$ with respect to the function $\mathfrak{f}(z)$ is equal to the order of the automorphism $\tau_3 = \left(\frac{\Omega_{2f}/K}{\wp_3}\right)$ in $\textrm{Gal}(\Omega_{2f}/K)$.
\label{prop:15}
\end{prop}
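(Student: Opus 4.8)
The plan is to show that the minimal period $m$ of $\eta = 2c(w/3)$ under the multivalued function $\mathfrak{f}(z)$ coincides with its period under the single-valued $3$-adic lift $T$, which was already identified with $n := \mathrm{ord}(\tau_3)$ in the discussion preceding the proposition. The engine is the uniqueness statement of Lemma \ref{lem:4}: once we know that an entire $\mathfrak{f}$-cycle lies in $\textsf{K}_3$ and avoids the residue class $1$ modulo $3$, each step is forced to equal $T$, so the cycle is rigidly a $T$-orbit.

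First I would record the consequences of $(\ref{eqn:5.3})$: since $\eta^{\tau_3} = T(\eta)$ we get $T^k(\eta) = \eta^{\tau_3^k}$ for all $k$, and because $\eta$ generates $\Omega_{2f}$ over $K$ (Theorem \ref{thm:1}), the equality $T^k(\eta) = \eta$ holds exactly when $\tau_3^k = \mathrm{id}$, i.e. when $n \mid k$. Thus $\eta, T(\eta), \dots, T^{n-1}(\eta)$ are distinct, and by the defining property of $T$ in Lemma \ref{lem:4} (namely $f(x, T(x)) = 0$) they form an $\mathfrak{f}$-cycle of length exactly $n$; hence $m \le n$. For the reverse inequality I would take a minimal $\mathfrak{f}$-cycle $\eta = \eta_0, \eta_1, \dots, \eta_{m-1}, \eta_m = \eta_0$ and prove it must be this very $T$-orbit. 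The crucial input is that every root of $R_m(x)$ lies in $\textsf{K}_3$: the proof of Proposition \ref{prop:14} enumerates all $2\cdot 3^m - 1$ roots explicitly as the Hensel lifts of the simple roots of $x^{3^m}-x$ (which are unramified), their images under the $\mathbb{Q}$-rational map $B(x) = \frac{x+2}{x-1}$, and the five fixed points, all of which lie in $\textsf{K}_3$. Since each $\eta_i$ is a periodic point whose period divides $m$, it is a root of $R_m(x)$, so $\eta_i \in \textsf{K}_3$.

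Next I would rule out $\eta_i \equiv 1 \pmod{\mathfrak{p}}$ for a prime $\mathfrak{p} \mid \wp_3$: from the exact identity $f(1,y) = -(y-1)^3$ one sees that $\eta_i \equiv 1$ forces $\eta_{i+1} \equiv 1$, and propagating this around the closed cycle would force $\eta_0 = \eta \equiv 1 \pmod{\mathfrak{p}}$, contradicting the fact (established just before $(\ref{eqn:5.3})$) that $2c(w/3) \not\equiv 1$. With $\eta_i \in \textsf{K}_3$ and $\eta_i \not\equiv 1 \pmod 3$ for every $i$, Lemma \ref{lem:4} gives $\eta_{i+1} = T(\eta_i)$ at each step, so $\eta_i = T^i(\eta)$ and $T^m(\eta) = \eta$, whence $n \mid m$. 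Combining this with $m \le n$ yields $m = n = \mathrm{ord}(\tau_3)$.

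I expect the main obstacle to be controlling the branches. A priori a minimal cycle could pass through one of the two roots of $f(\eta_i, y) = 0$ that differ from $T(\eta_i)$; by the discriminant computation in Lemma \ref{lem:4} these lie in a ramified extension of $\mathbb{Q}_3$, and such excursions could conceivably close up in fewer than $n$ steps. This is exactly what Proposition \ref{prop:14} forecloses, by pinning all periodic points of period dividing $m$ inside the unramified field $\textsf{K}_3$; the residue-class argument with $f(1,y)=-(y-1)^3$ then eliminates the remaining $B$-image roots, which are the ones $\equiv 1 \pmod 3$. The delicate point is thus to establish that the whole cycle genuinely lies in $\textsf{K}_3$ (not merely that the individual periodic points are algebraic integers, which is weaker), and it is precisely here that Proposition \ref{prop:14} is indispensable.
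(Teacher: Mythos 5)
Your proof is correct and follows essentially the same route as the paper's: both directions hinge on the uniqueness statement of Lemma \ref{lem:4} to identify any $\mathfrak{f}$-cycle through $\eta$ with the $T$-orbit, combined with (\ref{eqn:5.3}) and the fact that $\eta$ generates $\Omega_{2f}$. The only (cosmetic) difference is in the ordering: you first place the whole cycle in $\textsf{K}_3$ via the full enumeration of the roots of $R_m(x)$ from the proof of Proposition \ref{prop:14} and then rule out $\eta_i \equiv 1 \pmod{3}$ by propagating $f(1,y)=-(y-1)^3$ around the cycle, whereas the paper first rules out the congruence to $1$ and then uses Proposition \ref{prop:13} to conclude the cycle lies in an unramified extension.
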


\begin{proof}
If $\eta_1, \dots, \eta_{n-1} \in \overline{\mathbb{Q}}$ satisfy
$$f(\eta, \eta_1) = f(\eta_1, \eta_2) = \cdots = f(\eta_{n-1},\eta) = 0,$$
then the arguments $\eta, \eta_i$ are roots of $R_n(x)$, none of which are congruent to $1$ mod $\mathfrak{p}$.  (See the proof of Proposition \ref{prop:14}.)  By Proposition \ref{prop:13} they all lie in an extension of $K$ which is unramified over $3$, since they are roots of $\frac{x^{3^n}-x}{x-1}$ mod $\mathfrak{P}$, for some prime divisor $\mathfrak{P} \mid \mathfrak{p}$.  They can therefore be considered as elements of $\textsf{K}_3$.  Now Lemma \ref{lem:4} implies that
$$\eta_1 = T(\eta), \eta_2 = T(\eta_1) = T^2(\eta), \dots, \eta_{n-1} = T(\eta_{n-2}) = T^{n-1}(\eta).$$
Hence, $\eta = T^n(\eta) = \eta^{\tau_3^n}$, which shows that $n$ is divisible by the order of $\tau_3$, since $\eta = 2c(w/3)$ generates the field $\Omega_{2f}$.  This proves the proposition.
\end{proof}

\section{Examples.}

\noindent {\bf Example 1: $d=8$.}\medskip

The minimal polynomial of $\beta(w/3)$ corresponding to $d=8$ is $p_8(x)=x^2+4x+6$.  Then
\begin{equation*}
C_8(x)=x^2\cdot p_8\left(\frac{x^3+2}{x}\right)=(x^2+2)(x^4+2x^2+4x+2).
\end{equation*}
Note that these factors appear in our Table \ref{tab:2}. Since $d$ is even and $h(-8)=1$, the polynomial $C_8(x)$ does split into a factor of degree $2h(-d)$ and a factor of degree $4h(-d)$. Upon solving for the roots of the quartic factor of $C_8(x)$ in terms of radicals and checking the approximations from Maple, we find that $w=k+\frac{\sqrt{-d}}{2}=13+\sqrt{-2}$ gives us the periodic point
$$\eta=2c(w/3)=\frac{-1-\sqrt{-1}+\sqrt{-2}}{\sqrt{2}}.$$
\medskip

\noindent {\bf Example 2: $d=11$.}\medskip

If $d=11$, then $p_{11}(x)=x^2+2x+12$ and we have
\begin{equation*}
C_{11}(x)=x^2\cdot p_{11}\left(\frac{x^3+2}{x}\right)=x^6+2x^4+4x^3+12x^2+4x+4,
\end{equation*}
which also appears in our Table \ref{tab:2}. Here $-d\equiv 5$ (mod $8$) and $h(-11)=1$, so we know that $C_{11}(x)$ is irreducible with degree $6h(-11)=6$. In order to find $c(w/3)$ in this case we solve for a root of the cubic equation
$$x^3+(1 - \sqrt{-11})x+2 = 0.$$
With $w = \frac{1}{2}(13+\sqrt{11}i)$ we have $\beta(w/3) = -1+\sqrt{11}i$, and Cardan's formulas yield that
$$2c(w/3) = \left(-1 - \frac{(4i + \sqrt{11})\sqrt{3}}{9}\right)^{1/3} + \frac{(-1 + \sqrt{11}i)}{3}\left(-1 - \frac{(4i + \sqrt{11})\sqrt{3}}{9}\right)^{-1/3};$$
where the first cube root in this formula is $0.7568 - 0.9552i$ to $4$ decimal places.  By Theorem \ref{thm:1} this value generates the ring class field $\Omega_2$ of $K = \mathbb{Q}(\sqrt{-11})$ over $\mathbb{Q}$. \medskip

\noindent {\bf Example 3: $d=56$.}\medskip

If $d=56$, the following polynomial is the minimal polynomial of $\beta(w/3)$ over $\mathbb{Q}$, where $w=7+\sqrt{-14}$:
$$p_{56}(x) = x^8 - 4x^7 + 184x^6 + 96x^5 + 532x^4 - 4224x^3 + 16416x^2 + 47520x + 156816.$$
In this case,
\begin{align*}
C_{56}(x) &= h_1(x) h_2(x) = (x^8 + 4x^7 + 8x^6 + 52x^4 + 32x^2 - 32x + 16)\\
& \times (x^{16} - 4x^{15} + 4x^{14} + 32x^{13} - 28x^{12} + 8x^{11} + 160x^{10} + 100x^8\\
& - 240x^7 + 512x^6 + 1344x^5 + 2288x^4 + 1664x^3 + 704x^2 + 16).
\end{align*}
A root of the $16$-th degree factor $h_2(x)$ generates the ring class field $\Omega_2$ of $K = \mathbb{Q}(\sqrt{-14})$ over $\mathbb{Q}$.  From Table 1, we find that the roots of this factor are periodic points of $\mathfrak{f}(z)$ with period $4$.  This can also be seen from the modular factorization
$$h_2(x) \equiv (x^4 + 2x + 2)(x^4 + x^3 + 2x^2 + 2x + 2)(x + 2)^8 \ (\textrm{mod} \ 3),$$
since this shows that $\tau_3$ has order 4 in $\textrm{Gal}(\Omega_2/K)$.  The roots of the $8$-th degree factor $h_1(x)$ also have period $4$ with respect to $\mathfrak{f}(z)$, and generate the Hilbert class field of $K$ over $\mathbb{Q}$.  One of these roots is $-1/c_1(w/6) = -1/c(w/6+3/2) = -1/c(\frac{16+\sqrt{-14}}{6})$.  The polynomial $h_1(x)$ has the property that
$$x^8 h_1(-2/x) = 2^4 h_1(x),$$
and this can be used to find radical expressions for the roots. We have
$$h_1(x) = x^4 q\left(x-\frac{2}{x}\right), \ \ q(x) = x^4+4x^3+16x^2+24x+92.$$
We first find a root $u$ of $q(x)=(x^2 + 2x + 6)^2+56$ to be
$$u = -1 - \sqrt{-5 + 2i\sqrt{14}};$$
and then solve for a root of $h_1(x)$ from $x^2-ux-2=0$:
\begin{equation*}
\frac{-1}{c(\frac{16+\sqrt{-14}}{6})} = -\frac{\sqrt{-5 + 2i\sqrt{14}}}{2} - \frac{1}{2} - \frac{1}{2} \sqrt{4 + 2i\sqrt{14} + 2\sqrt{-5 + 2i\sqrt{14}}}.
\end{equation*}
In this equation $\sqrt{-5 + 2i\sqrt{14}} = \sqrt{2}+i\sqrt{7}$.  Reciprocating yields the value
\begin{equation*}
c\left(\frac{16+\sqrt{-14}}{6}\right)=-\frac{1}{4} (\sqrt{2}+i\sqrt{7}) - \frac{1}{4} + \frac{1}{4} \sqrt{2(1+\sqrt{2})(\sqrt{2}+i\sqrt{7})}.
\end{equation*}

\noindent {\bf Example 4: $d=23, 92$.}\medskip

We first note that
$$p_{23}(x) = x^6 + 11x^5 + 65x^4 + 191x^3 + 441x^2 + 405x + 675,$$
with $\textrm{disc}(p_{23}(x)) = -3^{18} 5^4 7^4 23^3$, from \cite[p. 873]{mc}.  Then
\begin{align*}
C_{23}(x) &= (x^6 + 5x^4 - 3x^3 + 12x^2 + 4x + 8)(x^6 - x^5 + 6x^4 + 3x^3 + 10x^2 + 8)\\
& \ \times (x^6 + x^5 + x^4 + 7x^3 + 11x^2 + 5x + 1)\\
& = q_1(x) q_2(x) q_3(x),
\end{align*}
where the discriminants of $q_1(x)$ and $q_2(x)$ are divisible by $2$.  This shows that Proposition \ref{prop:18} does not generally hold for the prime $l = 2$.  Also, with
$$p_{92}(x)= x^6 - 13x^5 + 841x^4 - 2567x^3 + 1071x^2 - 20493x + 75141$$
we find that
\begin{align*}
C_{92}(x) & = q_4(x) q_5(x) = (x^6 - 10x^5 + 44x^4 - 56x^3 + 16x^2 - 32x + 64)\\
& \ \times(x^{12} + 10x^{11} + 43x^{10} + 58x^9 + 73x^8 + 328x^7 + 377x^6 - 38x^5\\
& \  - 113x^4 - 212x^3 + 207x^2 - 6x + 1).
\end{align*}
Using the group $H = \{x, A(x), B(x), U(x)\}$, with $U(x) = -2/x$, we find that the action of $H$ induces permutations on the set $\textsf{S} = \{q_1, q_2, q_3, q_4\}$.  We compute that
$$(x-1)^6 q_1(B(x)) = 3^3 q_2(x), \ \ (x-1)^6 q_k(B(x)) = 3^3 q_k(x), \  k= 3, 4.$$
Also
$$(x+2)^6 q_k(A(x)) = 6^3 q_k(x), \ k = 1, 2; \ \ (x+2)^6 q_3(A(x)) = 6^3 q_4(x).$$
It follows that the orbits of $H$ acting on $\textsf{S}$ are $\{q_1, q_2\}$ and $\{q_3, q_4\}$.  The roots of the polynomials $q_k(x)$, for $1 \le k \le 4$, have period $3$ with respect to $\mathfrak{f}(z)$, while the roots of $q_5$ have period $6$:
$$q_5(x) \equiv (x^6 + x^5 + x^4 + 1)(x + 2)^6 \ (\textrm{mod} \ 3).$$
Furthermore,
\begin{align*}
&(x+2)^{12} q_5(A(x)) = 3^6 q_6(x)\\
&  = 3^6(x^{12} + 12x^{11} + 828x^{10} + 1696x^9 - 1808x^8 + 1216x^7 + 24128x^6\\
& \  - 41984x^5 + 18688x^4 - 29696x^3 + 44032x^2 - 20480x + 4096).
\end{align*}
A root of $q_6(x)$ clearly generates the same field $\Omega_4/\mathbb{Q}$ for $K = \mathbb{Q}(\sqrt{-23})$ that a root of $q_5(x)$ does, so $q_6(x)$ must be a factor of $C_{368}(x)$.  (See the formula just before (\ref{eqn:4.8}) and the proof below.)  Moreover, $\{q_5, q_6\}$ is also an orbit under the action of the group $H$.  Thus, the action of $H$ connects factors of the polynomials $C_d(x), C_{2^2d}(x), C_{4^2 d}(x)$, for $d = 23$.  This pattern continues, as we prove now.

\begin{prop}
a) If the irreducible polynomial $q(x)$ divides $C_{d}(x)$, where $q(2c(w/3)) = 0$ and $\textrm{deg}(q(x)) = \delta$, then 
$\hat q(x) = q(-2)^{-1}(x+2)^{\delta}q(A(x)) \in \mathbb{Z}[x]$ is monic and
\begin{equation*}
\hat q(x) = q(0)^{-1}x^{\delta}q(-2/x) \mid C_{4d}(x).
\end{equation*}

 b) Assume $4 \mid d$ or $-d \equiv 1$ (mod $8$).  If $C_{4d} = \hat q(x) \tilde q(x)$, with $\textrm{deg}(\tilde q) = 2\textrm{deg}(\hat q(x)) = 2\delta$, and if $q(x)$ in a) satisfies $q(0) = 1$, then $\tilde q(0) = 1$. \smallskip
 
 c) If $-d \equiv 1$ (mod $8$) and $C_d(x) = q_1(x) q_2(x) q_3(x)$, where $q_3(x)$ has $2c(w/3)$ as a root, then $q_3(0) = 1$.  Hence $2c(w/3)$ is a unit. \smallskip
 
 d) If $-d \equiv 1$ (mod $24$) and $w_k \in \textsf{R}_{-4^k d}$ is defined as in Theorem \ref{thm:1} (with $4^k d$ in place of $d$), then $2c(w_k/3)$ is a unit in $\Omega_{2^{k+1} f}$.
 \label{prop:17}
\end{prop}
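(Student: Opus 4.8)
The plan is to organize everything around the three linear-fractional involutions $A(x)=\frac{2-2x}{2+x}$, $U(x)=-2/x$ and $B(x)=\frac{x+2}{x-1}$, which satisfy $A=U\circ B$ and generate a Klein four-group, together with the functional equations $(\ref{eqn:4.7})$, $(\ref{eqn:4.8})$ and the displayed identity $(2+x)^3(2+y)^3f(A(x),A(y))=6^3f(y,x)$; each of these maps the period-$n$ points of $\mathfrak f$ to period-$n$ points. For (a) I would first define $\hat q(x)$ to be the minimal polynomial of $-2/\gamma_1=-1/c(w/3)$, where $\gamma_1=2c(w/3)$ is the distinguished root of $q$. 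Since $2/\gamma$ is an algebraic integer for every periodic point $\gamma$ (the remark after Theorem \ref{thm:3}) and $U$ commutes with the Galois action, the conjugates of $-2/\gamma_1$ are exactly the $-2/\gamma_i$; hence $\hat q$ is monic in $\mathbb Z[x]$ and, being the monic degree-$\delta$ polynomial with those roots, equals $q(0)^{-1}x^\delta q(-2/x)$. Comparing extreme coefficients gives $\hat q(0)=(-2)^\delta/q(0)$ and, since $\hat q\in\mathbb Z[x]$, the divisibility $q(0)\mid 2^\delta$. To get $\hat q\mid C_{4d}$ I would compute, using $(\ref{eqn:3.4})$ and the identity $\beta(2\tau)=\frac{1-2c^3(\tau)}{c^2(\tau)}$ from the proof of Proposition \ref{prop:10}, that $\frac{\gamma^3+2}{\gamma}=\beta(2w/3)$ for $\gamma=-1/c(w/3)$; by Lemma \ref{lem:1} and the definition of $p_{4d}$ the value $\beta(2w/3)$ is a root of $p_{4d}$, so $\gamma$ is a root of $C_{4d}(x)=x^{2h(-4d)}p_{4d}(\frac{x^3+2}{x})$ and $\hat q\mid C_{4d}$. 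The remaining equality $\hat q(x)=q(-2)^{-1}(x+2)^\delta q(A(x))$ then follows once the roots of $q$ are closed under $B$ (so $\{A(\gamma_i)\}=\{U(B(\gamma_i))\}=\{U(\gamma_i)\}$); this closure I would extract from $(\ref{eqn:4.8})$ and the coherence $(\ref{eqn:5.3})$, which make $B(\gamma_1)$ a periodic point generating $\Omega_{2f}$, hence a root of $C_d$ lying in the same factor $q$.

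For (b) the argument is bookkeeping with constant terms. The polynomial $C_{4d}$ is the norm from $\Omega_f$ to $\mathbb Q$ of $x^3-\beta(w_1/3)x+2$, so $C_{4d}(0)=\prod 2=2^{2h(-4d)}$; moreover $\deg C_{4d}=6h(-4d)$, and the hypothesis $\deg\hat q+\deg\tilde q=\delta+2\delta$ forces $\delta=2h(-4d)$, whence $C_{4d}(0)=2^\delta$. By (a) with $q(0)=1$ we get $\hat q(0)=(-2)^\delta=\pm2^\delta$, so $\tilde q(0)=C_{4d}(0)/\hat q(0)=\pm1$; as $\deg\tilde q$ is even and the roots lie in the totally imaginary field $\Omega_{2f}$, we have $\tilde q(0)=N(\mathrm{root})\ge 0$, so $\tilde q(0)=1$ and the roots are units.

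Part (c) carries the genuine arithmetic content and is where I expect the main difficulty. Here $2$ splits in $K$, and since both $\eta=2c(w/3)$ and $2/\eta=1/c(w/3)$ are algebraic integers, every prime $\mathfrak P\mid2$ of $\Omega_f$ satisfies $v_{\mathfrak P}(\eta)\in\{0,1\}$. The Newton polygon of $x^3-\beta(w/3)x+2$ at $\mathfrak P$ (with $\beta(w/3)$ a $\mathfrak P$-unit) has one side of slope $-1$ and one of slope $0$, so exactly one of the three roots $\eta,\,-1/c(w/6),\,-1/c_1(w/6)$ has valuation $1$ and the other two are units, consistent with their product being $-2$. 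Consequently $q_3(0)=N(\eta)$ divides $2^{2h}$, and it suffices to prove $q_3(0)$ is odd, i.e. that the valuation-$1$ root is never $\eta$; equivalently, that $c(w/3)$ has a pole at every prime above $2$. I expect this to be the hard step: the relation $c(w/6)c_1(w/6)=-c(w/3)$ of $(\ref{eqn:4.2})$ and the doubling relation of Proposition \ref{prop:1} are each formally consistent with both distributions, so the statement must be drawn from the $2$-adic behaviour of the value $c(w/3)$ itself, e.g. from the eta-quotient identities of Proposition \ref{prop:8} together with the ordinary (split) reduction of the CM point at $2$, in the spirit of the $3$-adic analysis of Section 5. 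Once $q_3(0)$ is shown odd, $q_3(0)\mid 2^{2h}$ forces $q_3(0)=\pm1$, and since $\Omega_f$ is totally imaginary $q_3(0)=N(\eta)\ge0$, giving $q_3(0)=1$ and $\eta$ a unit. The data for $d=23$, where $q_3(0)=1$ while $q_1(0)=q_2(0)=2^3$, confirm the predicted distribution of the prime $2$.

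Finally (d) is an induction on $k$ built from (a), (b) and (c). The case $k=0$ is exactly (c), since $-d\equiv1\pmod8$. At every step the hypotheses of (b) hold (for $k=1$ because $-d\equiv1\pmod8$, for $k\ge2$ because $4\mid 4^{k-1}d$). Assuming inductively that the generator $2c(w_{k-1}/3)$ of $\Omega_{2^kf}$ is a unit, its minimal polynomial $q^{(k-1)}$ satisfies $q^{(k-1)}(0)=1$ and has degree $\delta_{k-1}=2h(-4^kd)$; by (a) its image $\hat q^{(k-1)}$ divides $C_{4^kd}$ with $\hat q^{(k-1)}(0)=\pm2^{\delta_{k-1}}$. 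Using $[\Omega_{2^{k+1}f}:\Omega_{2^kf}]=2$, hence $h(-4^{k+1}d)=2h(-4^kd)$, one checks that $C_{4^kd}=\hat q^{(k-1)}\tilde q^{(k)}$ with $\deg\tilde q^{(k)}=2\delta_{k-1}$ and, by Theorem \ref{thm:2}, that the generator $2c(w_k/3)$ of $\Omega_{2^{k+1}f}$ is a root of the larger factor $\tilde q^{(k)}$. Then (b) yields $\tilde q^{(k)}(0)=1$, so $2c(w_k/3)$ is a unit in $\Omega_{2^{k+1}f}$, completing the induction.
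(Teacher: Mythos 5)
Your proposal has two genuine gaps. The first is in part a): to get the identity $\hat q(x) = q(-2)^{-1}(x+2)^{\delta}q(A(x))$ you invoke closure of the roots of $q$ under $B(x)=\frac{x+2}{x-1}$, justified by the claim that $B(\gamma_1)$ is ``a periodic point generating $\Omega_{2f}$, hence a root of $C_d$ lying in the same factor $q$.'' That inference is invalid. Theorem \ref{thm:3} only makes $B(\gamma_1)$ a root of \emph{some} $C_{d'}$; generating $\Omega_{2f}$ does not force $d'=d$, since the roots of $\hat q \mid C_{4d}$ (for instance $-1/c(w/3)$) are also periodic points generating the very same field $\Omega_{2f}$. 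Nor does being a root of $C_d$ place it in the same factor: for $d=23$ the paper computes $(x-1)^6 q_1(B(x)) = 3^3 q_2(x)$ (Example 4), so $B$ moves the roots of $q_1$ into a \emph{different} factor of $C_{23}(x)$, even though all three factors generate the same field $\Omega_1$. The $B$-closure of the particular factor containing $2c(w/3)$ is true (equation (\ref{eqn:6.7})), but in the paper it is a \emph{consequence} of part a), not an ingredient: one first proves directly that $A$ carries $q$ to $\hat q$, via $A(2c(w/3)) = 2c(-3/(2w))$ (Proposition \ref{prop:2}), Propositions \ref{prop:4} and \ref{prop:7}, and the fact from \cite{mc} that $x \mapsto \frac{3x+18}{x-3}$ permutes the roots of $p_{4d}(x)$; then $B = U\circ A$ fixes $q$ because $U$ and $A$ both send $q$ to $\hat q$. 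Since you would derive the $A$-statement from the $B$-closure while the $B$-closure rests on the $A$-statement, your argument is circular as it stands. (Your $U$-half of part a), using Proposition \ref{prop:10} to show $\frac{\gamma^3+2}{\gamma} = \beta(2w/3)$ for $\gamma = -1/c(w/3)$, is correct and is a nice alternative to the paper's appeal to Proposition \ref{prop:12}; parts b) and d) are essentially right.)

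The second and more serious gap is part c), which you explicitly leave open: your valuation argument correctly shows that at each prime of $\Omega_f$ above $2$ exactly one of the three roots $2c(w/3)$, $-1/c(w/6)$, $-1/c_1(w/6)$ has valuation $1$, but identifying \emph{which} one carries the valuation is the entire content of the statement, and you defer it to an unproved $2$-adic assertion about $c(w/3)$. The paper needs no local analysis here. It shows instead that $U(x)=-2/x$ maps the roots of $q_1$ and $q_2$ back into roots of $C_d$ itself: after normalizing $w = \frac{a+\sqrt{-d}}{2}$ so that $2^3 3^2 \mid a^2+d$, the numbers $w/6$ and $(w+9)/6$ are basis quotients of proper $\textsf{R}_{-d}$-ideals, so $\beta(w/6)$ and $\beta((w+9)/6)$ are roots of $p_d(x)$ by \cite{mc}, whence $U(-1/c(w/6)) = 2c(w/6)$ and $U(-1/c_1(w/6)) = 2c((w+9)/6)$ are roots of $C_d(x)$. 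This yields $x^{\delta}q_1(-2/x) = q_1(0)q_i(x)$ and $x^{\delta}q_2(-2/x) = q_2(0)q_{i'}(x)$ with $i,i' \in \{1,2\}$; multiplying the two identities and substituting $x \mapsto -2/x$ gives $(q_1(0)q_2(0))^2 = 2^{2\delta}$, so $q_1(0)q_2(0) = 2^{\delta} = C_d(0)$ and therefore $q_3(0)=1$. Without this (or a completed $2$-adic argument), your part c) is unproved, and so is part d), whose induction has c) as its base case.
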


\begin{proof}
First note that $\delta$ is always even, by the Remark following Theorem \ref{thm:1}. Let $w \in \textsf{R}_{-d}$ be as in that theorem.  Proposition \ref{prop:2} implies that
$$A(2c(w/3)) = \frac{1-2c(w/3)}{1+c(w/3)} = 2c(-3/2w).$$
Propositions \ref{prop:4} and \ref{prop:7} give that
$$\frac{(2c(-3/2w))^3+2}{2c(-3/2w)} = \beta(-3/2w)$$
and
$$\beta(-3/2w) = \frac{3\beta(2w/3)+18}{\beta(2w/3)-3}.$$
Since the last value is a root of $p_{4d}(x)$ by Lemma \ref{lem:1} and its proof and the fact the roots of $p_{4d}(x)$ are permuted by the map $x \rightarrow \frac{3x+18}{x-3}$, we deduce that $A(2c(w/3)) = 2c(-3/2w)$ is a root of $C_{4d}(x)$.  Now $C_{4d}(x) = \hat q(x) \tilde q(x)$, where $\textrm{deg}(\hat q(x)) = 4h(-d)$ and $\textrm{deg}(\tilde q(x)) = 8h(-d)$.  Hence $A(2c(w/3))$ must be a root of $\hat q(x)$, since $2c(w/3)$ generates $\Omega_{2f}$ and therefore has degree $4h(-d)$ over $\mathbb{Q}$.  The fact that $q(x)$ and $\hat q(x)$ are irreducible implies that
$$(x+2)^{\delta} q(A(x)) = k \hat q(x),$$
for some constant $k$.  An elementary calculation shows that
\begin{align*}
(x+2)^{\delta} q(A(x)) & = (x+2)^{\delta} \prod_{\alpha}{(A(x)-\alpha)}\\
& =  \prod_{\alpha}{(2-2x-\alpha(2+x))}\\
& = \prod_{\alpha}{(-2-\alpha)} \prod_{\alpha}{(x-A(\alpha))} = q(-2) \hat q(x).
\end{align*}
We also know that $-2/2c(w/3) = -1/c(w/3) = \frac{c_1(w/3)}{c(2w/3)}$ is a root of $C_{4d}(x)$, by Lemma \ref{lem:1} and Proposition \ref{prop:12}.  Therefore, $U(x) = -2/x$ also takes $q(x)$ to $\hat q(x)$ and 
\begin{equation*}
x^{\delta}q_2(-2/x) = \prod_{\alpha}{(-2-\alpha x)} = \prod_{\alpha}{\alpha} \prod_{\alpha}{\left(x+\frac{2}{\alpha}\right)} = q(0) \hat q(x).
\end{equation*}
This proves a). \medskip

For part b), the last equation implies that $\hat q(0) = \frac{2^{\delta}}{q(0)}$.  Assume first that $4 \mid d$ and $\delta = 4h(-d)$.  The definition
$$C_{4d}(x) = x^{4h(-d)} p_{4d}\left(\frac{x^3+2}{x}\right) = (x^3+2)^{4h(-d)} + xr(x)$$
implies that $C_{4d}(0) = 2^{\delta} = \hat q(0) \tilde q(0)$.  If $q(0) = 1$, then $\hat q(0) = 2^{\delta}$ and this gives that $\tilde q(0) = 1$.  Thus, if the roots of $q(x)$ are units, then the roots of $\tilde q(x)$ are also units.  Secondly, if $-d \equiv 1$ (mod $8$), then $\delta = 2h(-d)$ and $\textrm{deg}(p_{4d}(x)) = 2h(-4d) = 2h(-d)$.  In this case, the formula
$$C_{4d}(x) = x^{2h(-d)} p_{4d}\left(\frac{x^3+2}{x}\right) = (x^3+2)^{2h(-d)} + xr(x)$$
again implies $C_{4d}(0) = 2^{\delta}$, and the same argument applies as before.  This proves b).  \medskip

To prove c), consider the action of $U(x) = -2/x$ on the roots of $C_d(x) =q_1(x) q_2(x) q_3(x)$.  As in the first paragraph of the proof, if $q_3(2c(w/3)) = 0$, then $x^{\delta}q_3(x)$ is a factor of $C_{4d}(x)$, and is the only factor of $C_{4d}(x)$ having degree $\delta = 2h(-d)$.  Now one of the roots of $C_d(x)$ is $x = c_1(w/6)/c(w/3) = -1/c(w/6)$, by Proposition \ref{prop:12}, which is a root of $q_1(x)$, say.  For this root we have
$$\frac{-2}{x} = \frac{2c(w/3)}{c_1(w/6)} = 2c(w/6).$$
If $w = \frac{a+\sqrt{-d}}{2}$ is chosen so that $2^3 3^2 \mid a^2 + d$ (possible because $-d$ is a square in $\mathbb{Q}_2$ and in $\mathbb{Q}_3$) then $w/6$ is the basis quotient for a proper ideal in $\textsf{R}_{-d}$, from which it follows that $\beta(w/6)$ is a root of $p_d(x)$.  (See \cite[p. 861]{mc}.)  This implies that $2c(w/6)$ is a root of $C_d(x)$, by its definition.  Similarly, $y = -1/c_1(w/6)$ must be a root of $q_2(x)$ and
$$\frac{-2}{y} = 2c_1(w/6) = 2c((w+9)/6),$$
where $(w+9)/6 = (a+18+\sqrt{-d})/12$ can also be chosen to be a basis quotient for a proper ideal in $\textsf{R}_{-d}$.  Hence, $x^{\delta}q_i(-2/x)$ must be a factor of $C_d(x)$, for $i = 1,2$.  As above, we have
\begin{align*}
x^{\delta}q_1\left(\frac{-2}{x}\right) = q_1(0) q_i(x), \ \ i = 1 \ \textrm{or} \ 2;\\
x^{\delta}q_2\left(\frac{-2}{x}\right) = q_2(0) q_{i'}(x), \ \ i' = 1 \ \textrm{or} \ 2.
\end{align*}
Multiplying these two equations and replacing $x$ by $-2/x$ gives
\begin{equation*}
\left(\frac{-2}{x}\right)^{2\delta} q_1(x) q_2(x) = q_1(0)q_2(0) q_1\left(\frac{-2}{x}\right)q_2\left(\frac{-2}{x}\right),
\end{equation*}
which implies that
$$2^{2\delta} = (q_1(0) q_2(0))^2.$$
Hence, $q_1(0) q_2(0) = 2^{\delta}$.  On the other hand,
$$C_d(0) = 2^{2h(-d)} = 2^{\delta} = q_1(0) q_2(0) q_3(0)$$
implies that $q_3(0) = 1$ and $2c(w/3)$ is a unit. \medskip

Now part d) follows from a)-c) and Theorem \ref{thm:1} by induction.
\end{proof}

\noindent {\bf Remark.} By the above argument for the factor $q(x)$ of $C_d(x)$, we also have
\begin{equation}
(x-1)^{\delta}q(B(x)) = q(1) q(x).
\label{eqn:6.7}
\end{equation}
Setting $x=0$ in $B(x) = \frac{x+2}{x-1}$ gives further that
\begin{equation}
q(-2) = q(1) q(0).
\label{eqn:6.8}
\end{equation}
It is curious that equation (\ref{eqn:6.8}) relates the values of $q(x)$ at the fixed points of the function $\mathfrak{f}(z)$. \medskip

In Example 4, with $d = 23$, the unit $2c((7+\sqrt{-23})/6)$ in the Hilbert class field $\Omega_1$ of $K = \mathbb{Q}(\sqrt{-23})$ is given by the formula
\begin{align*}
2c\left(\frac{7+\sqrt{-23}}{6}\right) &= \frac{\omega}{6}A^{1/3}+\frac{2\omega^2}{3}(2+\sqrt{-23})A^{-1/3}+\frac{-1+\sqrt{-23}}{6},\\
A  &= -76-24\sqrt{-3}+16\sqrt{-23}+12\sqrt{69},
\end{align*}
where $\omega = \frac{1}{2}(-1+\sqrt{-3})$ and $A^{1/3} = 3.30315...+1.116849...i$.

\section{Algorithm for determining $d$.}

The above discussion raises the question: if $q(x)$ is an irreducible factor of $R_n(x)$, for some $n \ge 2$, how can we determine the value of $d$ for which $q(x) \mid C_d(x)$; and consequently the discriminant $d_K$ and conductor $f$ for which $-d = d_K f^2$? \medskip

The following algorithm can be used to answer this question.  Suppose we have a given $q(x)$ which is irreducible and divides $R_n(x)$. \medskip

\noindent {\bf Step 1}. First compute the polynomial $c p(x) = \textrm{Res}_y(xy - y^3-2, q(y))$, for a monic $p(x) \in \mathbb{Z}[x]$ and some constant $c$.  By Theorem 3, $p(x) =p_d(x)$ for some discriminant $-d \equiv 1$ (mod $3$).  \medskip

\noindent {\bf Step 2}. Next, factor the polynomial $C(x) = x^\delta p\left(\frac{x^3+2}{x}\right)$, $\delta = \textrm{deg}(p(x))$.  According as $C(x)$ has $3, 2$ or $1$ irreducible factor, we know $-d \equiv 1$ (mod $8$), $4 \mid d$, or $-d \equiv 5$ (mod $8$), respectively.  Since one of the factors will be $q(x)$, this tells us that $\textrm{deg}(q(x)) = 2h(-d), 4h(-d)$, or $6h(-d)$. \medskip

\noindent {\bf Step 3}. Compute the integer factorization of $D = \textrm{disc}(q(x))$.  The odd prime factors of the integer $d$ must divide $D$, since any such prime $p$ must be ramified in the field $\Omega_f$ or $\Omega_{2f}$.  Since $\Omega_f$ and $\Omega_{2f}$ are normal over $\mathbb{Q}$, we can eliminate primes $p$ as divisors of $d$, if $q(x)$ has an irreducible factor (mod $p$) of degree $r$ and multiplicity $1$.  In this case, Hensel's Lemma and the theory of Galois extensions imply that $q(x)$ has to factor into irreducibles of degree $r$ in $\mathbb{Q}_p$, each of which generates an unramified extension of $\mathbb{Q}_p$. \medskip

\noindent {\bf Step 4.} Let $\mathcal{S}$ be the set of remaining odd prime divisors of $D$.  Form products $\tilde d$ of primes in $\mathcal{S}$, combined with powers of $4$ or $2 \cdot 4^k$, for which $h(-\tilde d)$ is the value determined in Step 2 for $\textrm{deg}(q(x))$.  These values can be eliminated if they do not satisfy $\tilde d \equiv 2$ (mod $3$) and the congruence modulo $4$ or $8$ determined in Step 2.  Moreover, if $l \neq 2,3$ is a prime divisor of $D$ eliminated in Step 3 or not dividing $\tilde d$, or even any such prime divisor of $\textrm{disc}(p(x))$, and $\left(\frac{-\tilde d}{l}\right) \neq -1$, then this product $\tilde d$ can be eliminated, by a theorem of Deuring which says that $\left(\frac{-d}{l}\right) \neq 1$ for any prime divisor $l$ of $\textrm{disc}(H_{-d}(x))$.  (This uses the fact that any prime divisor of $\textrm{disc}(p(x))$, other than $2$ or $3$, must also be a prime divisor of $\textrm{disc}(H_{-d}(x))$.) \medskip

\noindent {\bf Step 5.} If a number of such products $\tilde d$ still remain, some can be eliminated as follows.  Find primes of the form $l = x^2+y^2\tilde d$ and factor $q(x)$ mod $l$.  For all such primes, if $\tilde d = d$, $q(x)$ should factor completely (mod $l$).  If it does not, for a single prime $l$, the product $\tilde d$ can be eliminated.  This may be combined with the Legendre symbol condition in Step 4 and these two steps computed interchangeably. \medskip
 
Since $\mathcal{S}$ is a finite set, and the possible powers of the prime divisors of $\tilde d$ are limited by the powers of those primes dividing $D$, this algorithm must come to an end after finitely many steps.  However, it may happen that more than one product $\tilde d$ remains as a candidate for the correct value of $d$.  In this case $q(x)$ can be factored over $K = \mathbb{Q}(\sqrt{-\tilde d})$.  If it does not factor, then $\tilde d$ can be eliminated as a possibility.  The condition on the class number $h(-d)$ may also be used to clinch the final value of $d$.  \medskip

In some cases, computing the effect of the transformations in the group $H = \{x,A(x),B(x),U(x)\}$ can help narrow down the search for $d$.  For example, let $q(x)$ be the last polynomial of degree $16$ mentioned after Table 1.  Computing $p(x)$ and $C(x) = q(x) \hat q(x)$ shows that $q(x)$ is the factor of smaller degree of $C(x)$, so $\textrm{deg}(q(x)) = 2h(-d)$ and therefore $h(-d) = 8$.  Furthermore, $q(x)$ maps to itself under all the elements of $H$, so $q(x)$ is not the result of applying $A(x)$ to a polynomial dividing $C_{d'}(x)$, for a smaller value of $d'$, as in Proposition \ref{prop:17}a).  Since $d$ must be even, we conclude that $d$ is either $4$ or $8$ times a product of odd primes.  Now, $\textrm{disc}(q(x)) = 2^{184} 3^{56} 5^8 7^{16} 11^8 23^8$, but
\begin{align*}
q(x) & \equiv  (x^2 + 2x + 4)(x^2 + 4x + 2)(x + 1)^2(x^2 + 4x + 1)(x^2 + 2)^2(x + 3)^2\\
& \ \times (x^2 + x + 2) \ \textrm{mod} \ 5;\\
q(x) & \equiv (x^2 + 9x + 12)(x^2 + 11x + 2)(x + 21)^2(x + 1)^2(x + 12)^2(x^2 + 10x + 8)\\
& \ \times (x + 19)^2(x^2 + 12x + 2) \ \textrm{mod} \ 23.
\end{align*}
Thus, $\mathcal{S} =\{7, 11\}$.  The possible values for $d$ lie in $\{8 \cdot 7^r, 8 \cdot 11^s, 4 \cdot 7^r \cdot 11^s, 8 \cdot 7^r \cdot 11^s: r, s \ge 1\}$.  This is because the values $4 \cdot 7^r \equiv 1$ (mod $3$) can be eliminated; while $-11 \equiv 5$ (mod $8$), so $3$ divides the class number for $-\tilde d  = -4 \cdot 11^s$, because $s$ must be odd.  (See Example 2 in Section 6.)  The value $\tilde d = 56$ is not possible by Example 3, and $8 \cdot 7^2$ is not a possibility because $l = 3^2+2\cdot 7^2 = 107$ splits in the corresponding ring class field (note $\left(\frac{-8}{107}\right) = 1$), but $q(x)$ factors into a product of quadratic polynomials mod $l$.  The product $\tilde d = 8 \cdot 7^2$ can also be eliminated because $\left(\frac{-8}{19}\right) = 1$ and $19$ divides $\textrm{disc}(p(x))$.  The values $\tilde d = 8 \cdot 7^r, 8 \cdot 11^s$ and $\tilde d = 8 \cdot 7^r \cdot 11^s$ are similarly not possible because $\left(\frac{-8}{19}\right) = 1$ and $7$ and $11$ are quadratic residues of $19$. \medskip

In the remaining products, $\tilde d = 4 \cdot 7^r \cdot 11^s$, $11$ must occur to an odd power for $\tilde d$ to be $2$ mod $3$.  By the formulas in Section 4.4, this power must be $1$, since otherwise $11$ would divide the class number.  For $h(-\tilde d) = 8$, the only possibility that remains is $d = 4\cdot 77 = 308$.  Hence, $q(x)$ is a factor of $C_{308}(x)$, as claimed in Section 4.2.  Note that $\left(\frac{-77}{p}\right) = -1$ for $p = 5, 23$, as well as for all the prime divisors $p\neq 2, 3, 7, 11$ of
$$\textrm{disc}(p(x)) = 2^{184}3^{168}5^{48}7^{32}11^{16}19^823^{16}29^883^2191^4227^6281^4.$$
This is a strong check on the arguments.  In fact, $\left(\frac{-77}{p}\right) = -1$ also holds for all $27$ prime divisors, distinct from $2, 7$ or $11$, of the class equation $H(x)$ corresponding to the polynomial $p(x)$ (see \cite{mc}).  Hence, we have $H(x) = H_{-308}(x)$.

\noindent Dept. of Mathematics, University of Missouri at Columbia \smallskip

\noindent 208 Math Sci Bldg, 810 Rollins St., Columbia, MO, 65211 \smallskip

\noindent email: akkarapakams@missouri.edu \bigskip

\noindent Dept. of Mathematical Sciences \smallskip

\noindent Indiana University at Indianapolis (IUI) \smallskip

\noindent LD 270, 402 N. Blackford St., Indianapolis, IN, 46202 \smallskip

\noindent e-mail: pmorton@iu.edu

\end{document}